\renewcommand\epsilon\varepsilon
\newcommand\cb{\mathrm{cb}}
\newcommand\diag{\operatorname{diag}}
\newcommand\Tr{\operatorname{Tr}}
\newcommand\bbR{\mathbb{R}}
\newcommand\bbC{\mathbb{C}}
\newcommand\apschur{\mathrm{AP}_{p,\mathrm{cb}}^{\mathrm{Schur}}}
\newcommand\lambdaapschur{\Lambda_{p,\mathrm{cb}}^{\mathrm{Schur}}}
\newcommand\cs{C^{\ast}}
\DeclareMathOperator\SL{SL}
\DeclareMathOperator\Sp{Sp}
\DeclareMathOperator\U{U}
\DeclareMathOperator\SO{SO}
\DeclareMathOperator\SU{SU}
\DeclareMathOperator\im{Im}
\DeclareMathOperator\id{id}
\theoremstyle{definition}
\newtheorem{thm}{Theorem}[section]
\newtheorem{dfn}[thm]{Definition}
\newtheorem{lem}[thm]{Lemma}
\newtheorem{prp}[thm]{Proposition}
\newtheorem{cor}[thm]{Corollary}
\newtheorem{rmk}[thm]{Remark}
\newtheorem{ntn}[thm]{Notation}
\author{Uffe Haagerup}
\thanks{Uf{}fe Haagerup sadly passed away on July 5, 2015.\\{}\\\indent Uf{}fe Haagerup was supported by ERC Advanced Grant no.~OAFPG 247321, the Danish Natural Science Research Council, and the Danish National Research Foundation through the Centre for Symmetry and Deformation (DNRF92).}
\address{Uffe Haagerup
\newline Department of Mathematical Sciences, University of Copenhagen
\newline Universitetsparken 5, DK-2100 Copenhagen \O, Denmark}
\author{Tim de Laat}
\thanks{Tim de Laat was supported by the Danish National Research Foundation through the Centre for Symmetry and Deformation (DNRF92).}
\address{Tim de Laat
\newline KU Leuven, Department of Mathematics
\newline Celestijnenlaan 200B -- Box 2400, B-3001 Leuven, Belgium}
\email{tim.delaat@wis.kuleuven.be}
\title{Simple Lie groups without the Approximation Property II}
\begin{document}

\begin{abstract}
We prove that the universal covering group $\widetilde{\mathrm{Sp}}(2,\mathbb{R})$ of $\mathrm{Sp}(2,\mathbb{R})$ does not have the Approximation Property (AP). Together with the fact that $\mathrm{SL}(3,\mathbb{R})$ does not have the AP, which was proved by Lafforgue and de la Salle, and the fact that $\mathrm{Sp}(2,\mathbb{R})$ does not have the AP, which was proved by the authors of this article, this finishes the description of the AP for connected simple Lie groups. Indeed, it follows that a connected simple Lie group has the AP if and only if its real rank is zero or one. By an adaptation of the methods we use to study the AP, we obtain results on approximation properties for noncommutative $L^p$-spaces associated with lattices in $\widetilde{\mathrm{Sp}}(2,\mathbb{R})$. Combining this with earlier results of Lafforgue and de la Salle and results of the second named author of this article, this gives rise to results on approximation properties of noncommutative $L^p$-spaces associated with lattices in any connected simple Lie group.
\end{abstract}

\maketitle

\section{Introduction} \label{sec:introduction}
This is the second article of the authors on the Approximation Property (AP) for Lie groups. In the first article on this topic, the authors proved that $\mathrm{Sp}(2,\mathbb{R})$ does not satisfy the AP \cite{haagerupdelaat1}. Together with the earlier established fact that $\mathrm{SL}(3,\mathbb{R})$ does not have the AP, which was proved by Lafforgue and de la Salle in \cite{ldls}, this implied that if $G$ is a connected simple Lie group with finite center and real rank greater than or equal to two, then $G$ does not satisfy the AP. In \cite{haagerupdelaat1}, it was pointed out that in order to extend this result to the class of connected simple Lie groups with real rank greater than or equal to two, i.e., not necessarily with finite center, it would be sufficient to prove that the universal covering group $\widetilde{\mathrm{Sp}}(2,\mathbb{R})$ of $\mathrm{Sp}(2,\mathbb{R})$ does not satisfy the AP. The main goal of this article is to prove this. This finishes the description of the AP for connected simple Lie groups. Indeed, it follows that a connected simple Lie group has the AP if and only if its real rank is zero or one.

In this article we are mainly interested in Lie groups, but many definitions are given in the setting of locally compact groups. We always assume locally compact groups to be second countable and Hausdorff. Before we state the main results of this article, we give some background (see Section 1 of \cite{haagerupdelaat1} for a more extensive account of the background).

Let $G$ be a locally compact group. Denote by $A(G)$ its Fourier algebra and by $M_0A(G)$ the space of completely bounded Fourier multipliers on $G$. Recall that $G$ is said to have the Approximation Property for groups (AP) if there is a net $(\varphi_{\alpha})$ in the Fourier algebra $A(G)$ such that $\varphi_{\alpha} \to 1$ in the $\sigma(M_0A(G),M_0A(G)_*)$-topology, where $M_0A(G)_*$ denotes the natural predual of $M_0A(G)$, as introduced in \cite{decannierehaagerup}.

The AP was defined by the first named author and Kraus in \cite{haagerupkraus} as a version for groups of the Banach space approximation property (BSAP) of Grothendieck. To see the connection, recall first that Banach spaces have a natural noncommutative analogue, namely, operator spaces. Recall that an operator space $E$ is a closed linear subspace of the bounded operators $\mathcal{B}(\mathcal{H})$ on a Hilbert space $\mathcal{H}$. Operator spaces have a remarkably rich structure (see \cite{effrosruanoperatorspaces}, \cite{pisieroperatorspaces}). For the class of operator spaces, which contains the class of $\cs$-algebras, a well-known version of the BSAP is known, namely, the operator space approximation property (OAP). The first named author and Kraus proved that a discrete group $\Gamma$ has the AP if and only if its reduced $\cs$-algebra $C^{\ast}_{\lambda}(\Gamma)$ has the OAP.

The AP also relates to other approximation properties for groups (see \cite{brownozawa} for an extensive text on approximation properties for groups and operator algebras). It is known that weak amenability (which is strictly weaker than amenability) strictly implies the AP. Amenability and weak amenability have been studied thoroughly for Lie groups. Indeed, a connected simple Lie group with real rank zero is amenable and a connected simple Lie group with real rank one is weakly amenable (see \cite{cowlinghaagerup} and \cite{hansen}). Also, it has been known for some time that connected simple Lie groups with real rank greater than or equal to two are not weakly amenable (see \cite{haagerupgroupcsacbap} and \cite{dorofaeff}). In addition, weak amenability was studied for a larger class of connected Lie groups in \cite{cowlingdorofaeffseegerwright}. The AP has been less studied than weak amenability. In particular, until the work of Lafforgue and de la Salle, no example of an exact group without the AP was known.

The key theorem of this article is as follows.
\newtheorem*{thm:covsp2noap}{Theorem \ref{thm:covsp2noap}}
\begin{thm:covsp2noap}
  The universal covering group $\widetilde{\Sp}(2,\bbR)$ of the symplectic group $\Sp(2,\bbR)$ does not have the Approximation Property.
\end{thm:covsp2noap}
Combining this with the fact that $\SL(3,\mathbb{R})$ does not have the AP, as established by Lafforgue and de la Salle, and the fact that $\mathrm{Sp}(2,\mathbb{R})$ does not have the AP, as proved by the authors, the following main result follows.
\newtheorem*{thm:ctdsimple}{Theorem \ref{thm:apctdsimple}}
\begin{thm:ctdsimple}
  Let $G$ be a connected simple Lie group. Then $G$ has the Approximation Property if and only if $G$ has real rank zero or one.
\end{thm:ctdsimple}
There are important differences between the approach of Lafforgue and de la Salle for the proof of the fact that $\mathrm{SL}(3,\mathbb{R})$ does not have the AP in \cite{ldls} and the approach of the authors for proving the failure of the AP for $\mathrm{Sp}(2,\mathbb{R})$ in \cite{haagerupdelaat1} and for its universal covering group in this article. Indeed, the method of Lafforgue and de la Salle gives information about approximation properties for certain noncommutative $L^p$-spaces associated with lattices in $\mathrm{SL}(3,\mathbb{R})$, which the method of the authors does not. However, the latter is more direct, since it suffices to consider completely bounded Fourier multipliers rather than completely bounded multipliers on Schatten classes.

Noncommutative $L^p$-spaces are important examples of the earlier mentioned operator spaces. Let $M$ be a finite von Neumann algebra with normal faithful trace $\tau$. For $1 \leq p < \infty$, the noncommutative $L^p$-space $L^p(M,\tau)$ is defined as the completion of $M$ with respect to the norm $\|x\|_p=\tau((x^*x)^{\frac{p}{2}})^{\frac{1}{p}}$, and for $p=\infty$, we put $L^{\infty}(M,\tau)=M$ (with operator norm). Noncommutative $L^p$-spaces can be realized by interpolating between $M$ and $L^1(M,\tau)$ (see \cite{kosaki}). This leads to an operator space structure on them (see \cite{pisieroh},\cite{jungeruan}).

An operator space $E$ is said to have the completely bounded approximation property (CBAP) if there exists a net $(F_{\alpha})$ of finite-rank maps on $E$ with $\sup_{\alpha}\|F_{\alpha}\|_{cb} < C$ for some $C > 0$ such that $\lim_{\alpha} \|F_{\alpha}x-x\|=0$ for every $x \in E$. The infimum of all possible $C$'s is denoted by $\Lambda(E)$. If $\Lambda(E)=1$, then $E$ has the completely contractive approximation property (CCAP). An operator space $E$ is said to have the operator space approximation property (OAP) if there exists a net $(F_{\alpha})$ of finite-rank maps on $E$ such that $\lim_{\alpha} \|(\id_{\mathcal{K}(\ell^2)} \otimes F_{\alpha})x-x\|=0$ for all $x \in \mathcal{K}(\ell^2) \otimes_{\min} E$. Here, $\mathcal{K}(\ell^2)$ denotes the space of compact operators on $\ell^2$. The CBAP goes back to \cite{decannierehaagerup}, and the OAP was defined in \cite{effrosruanap}. By definition, the CCAP implies the CBAP, which in turn implies the OAP.

It was shown by Junge and Ruan \cite{jungeruan} that if $\Gamma$ is a weakly amenable countable discrete group (resp.~a countable discrete group with the AP), and if $p \in (1,\infty)$, then $L^p(L(\Gamma))$ has the CBAP (resp.~the OAP), where $L(\Gamma)$ denotes the group von Neumann algebra of $\Gamma$. The method of Lafforgue and de la Salle can be used to prove the failure of the CBAP and OAP for noncommutative $L^p$-spaces. The key ingredient of their method is the property of completely bounded approximation by Schur multipliers on $S^p$, denoted $\apschur$, which is weaker than the AP for $p \in (1,\infty)$. Indeed, they prove that if $p \in (1,\infty)$ and $\Gamma$ is a countable discrete group with the AP, then $\Lambda_{p,\mathrm{cb}}^{\mathrm{Schur}}(\Gamma)=1$ (see \cite[Corollary 3.12]{ldls}). Also, they prove that if $p \in (1,\infty)$ and $\Gamma$ is a countable discrete group such that $L^p(L(\Gamma))$ has the OAP, then 
$\Lambda_{p,\mathrm{cb}}^{\mathrm{Schur}}(\Gamma)=1$ (see \cite[Corollary 3.13]{ldls}). Using this, they prove that for $p \in [1,\frac{4}{3}) \cup (4,\infty]$ and a lattice $\Gamma$ in $\mathrm{SL}(3,\mathbb{R})$, the noncommutative $L^p$-space $L^p(L(\Gamma))$ does not have the OAP or CBAP.

In \cite{delaat1}, the second named author generalized the results of Lafforgue and de la Salle on approximation properties for noncommutative $L^p$-spaces associated with lattices in $\mathrm{SL}(3,\mathbb{R})$ to noncommutative $L^p$-spaces associated with lattices in connected simple Lie groups with finite center and real rank greater than or equal to two. In this article, we will in turn generalize these results to connected simple Lie groups with real rank greater than or equal to two that do not necessarily have finite center, as is illustrated by our main result on noncommutative $L^p$-spaces.
\newtheorem*{thm:nclps}{Theorem \ref{thm:nclpsmain}}
\begin{thm:nclps}
  Let $\Gamma$ be a lattice in a connected simple Lie group with real rank greater than or equal to two. For $p \in [1,\frac{12}{11}) \cup (12,\infty]$, the noncommutative $L^p$-space $L^p(L(\Gamma))$ does not have the OAP or CBAP.
\end{thm:nclps}
It may very well be possible that the range of $p$-values for which the CBAP and OAP fail is larger than $[1,\frac{12}{11}) \cup (12,\infty]$. We will comment on this in further detail in Section \ref{sec:mainresults}.

This article is organized as follows. In Section \ref{sec:preliminaries}, we recall some preliminaries. In Section \ref{sec:covsp2r}, we prove that $\widetilde{\mathrm{Sp}}(2,\mathbb{R})$ does not have the AP. We prove the results on noncommutative $L^p$-spaces in Section \ref{sec:nclpspaces}. The results will be summarized and combined to our general results in Section \ref{sec:mainresults}. Appendix \ref{sec:sgp} gives a connection between spherical functions for Gelfand pairs and their analogues for strong Gelfand pairs that might give a deeper understanding of certain results that are proved in Section \ref{sec:covsp2r}. The material in that appendix follows from discussions of the second named author with Thomas Danielsen. This material might be known to experts, but we could not find an explicit reference. 

\section{Preliminaries} \label{sec:preliminaries}
\subsection{Universal covering groups} \label{subsec:universalcoveringgroups}
Let $G$ be a connected Lie group. A covering group of $G$ is a Lie group $\widetilde{G}$ with a surjective Lie group homomorphism $\sigma:\widetilde{G} \rightarrow G$, in such a way that $(\widetilde{G},\sigma)$ is a covering space of $G$ (in the topological sense). A simply connected covering space is called a universal covering space. Every connected Lie group $G$ has a universal covering space $\widetilde{G}$. Let $\sigma:\widetilde{G} \rightarrow G$ be the corresponding covering map, and let $\tilde{1} \in \sigma^{-1}(1)$. Then there exists a unique multiplication on $\widetilde{G}$ that makes $\widetilde{G}$ into a Lie group in such a way that $\sigma$ is a surjective Lie group homomorphism. The group $\widetilde{G}$ is called a universal covering group of the Lie group $G$. Universal covering groups of connected Lie groups are unique up to isomorphism. They also satisfy the exact sequence $1 \rightarrow \pi_1(G) \rightarrow \widetilde{G} \rightarrow G \rightarrow 1$, where $\pi_1(G)$ denotes the fundamental group of $G$. For details on universal covering groups, see \cite[Section I.11]{knapp}.

\subsection{Polar decomposition of Lie groups} \label{subsec:kakdecomposition}
Every connected semisimple Lie group $G$ has a polar decomposition $G=KAK$, where $K$ arises from a Cartan decomposition $\mathfrak{g}=\mathfrak{k} + \mathfrak{p}$ (the group $K$ has Lie algebra $\mathfrak{k}$), and $A$ is an abelian Lie group such that its Lie algebra $\mathfrak{a}$ is a maximal abelian subspace of $\mathfrak{p}$. If $G$ has finite center, then $K$ is a maximal compact subgroup. The dimension of the Lie algebra $\mathfrak{a}$ of $A$ is called the real rank of $G$ and is denoted by $\mathrm{rank}_{\mathbb{R}}(G)$. In general, given a polar decomposition $G=KAK$, it is not the case that for $g \in G$ there exist unique $k_1,k_2 \in K$ and $a \in A$ such that $g=k_1ak_2$. However, after choosing a set of positive roots and restricting to the closure $\overline{A^{+}}$ of the positive Weyl chamber $A^{+}$, we still have $G=K\overline{A^{+}}K$. Moreover, if $g=k_1ak_2$, where $k_1,k_2 \in K$ and $a \in \overline{A^{+}}$, then $a$ is unique. Note that we can choose any Weyl chamber to be the positive one by choosing the set of positive roots correspondingly. We also use the terminology polar decomposition for such a $K\overline{A^{+}}K$ decomposition. For details, see \cite[Section IX.1]{helgasonlie}.

\subsection{Gelfand pairs and spherical functions} \label{subsec:gelfandpairs}
Let $G$ be a locally compact group (with Haar measure $dg$) with a compact subgroup $K$ (with normalized Haar measure $dk$). A function $\varphi:G \rightarrow \mathbb{C}$ is said to be $K$-bi-invariant if for all $g \in G$ and $k_1,k_2 \in K$, we have $\varphi(k_1gk_2)=\varphi(g)$. We denote the space of continuous $K$-bi-invariant compactly supported functions by $C_c(K \backslash G \slash K)$. If the subalgebra $C_c(K \backslash G \slash K)$ of the (convolution) algebra $C_c(G)$ is commutative, then the pair $(G,K)$ is called a Gelfand pair. Equivalently, the pair $(G,K)$ is a Gelfand pair if and only if for every irreducible unitary representation $\pi$ on a Hilbert space $\mathcal{H}$, the space $\mathcal{H}_e=\{ \xi \in \mathcal{H} \mid \forall k \in K:\,\pi(k)\xi=\xi \}$ consisting of $K$-invariant vectors is at most one-dimensional. For a Gelfand pair $(G,K)$, a function $h \in C(K \backslash G \slash K)$ is called spherical if the functional $\chi$ on $C_c(K \backslash G \slash K)$ given by $\chi(\varphi)=\int_G \varphi(g)h(g^{-1})dg$ for $\varphi \in C_c(K \backslash G \slash K)$ defines a nontrivial character. The theory of Gelfand pairs and spherical functions is well-established and goes back to Gelfand \cite{gelfand}. For more recent accounts of the theory, we refer the reader to \cite{vandijk}, \cite{faraut}, \cite{wolf}. 

Let $G$ be a locally compact group with closed subgroup $H$. A function $\varphi:G \rightarrow \mathbb{C}$ is said to be $\mathrm{Int}(H)$-invariant if $\varphi(hgh^{-1})=\varphi(g)$ for all $g \in G$ and $h \in H$. The space of continuous $\mathrm{Int}(H)$-invariant functions is denoted by $C(G \slash \slash H)$.

Let now $G$ be a locally compact group with compact subgroup $K$. The pair $(G,K)$ is called a strong Gelfand pair if the subalgebra $C_c(G \slash\slash K)$ of $C_c(G)$ is commutative. In the setting of locally compact groups, the notion of strong Gelfand pair goes back to Goldrich and Wigner \cite{goldrichwigner}. It is well-known that whenever $G$ is a locally compact group with a compact subgroup $K$, then $(G,K)$ is a strong Gelfand pair if and only if $(G \times K,\Delta K)$ (where $\Delta K$ is the diagonal subgroup) is a Gelfand pair.

It turns out that certain results of Section \ref{sec:covsp2r} can be understood on a deeper level in the setting of strong Gelfand pairs, in particular when one considers the analogue of spherical functions in this setting. This is discussed in Appendix \ref{sec:sgp}. The analogues of spherical functions already occurred in \cite{godement}. 

\subsection{The Fourier algebra}
Let $G$ be a locally compact group. The Fourier algebra $A(G)$ is defined as the space consisting of the coefficients of the left-regular representation $\lambda:G \rightarrow \mathcal{B}(L^2(G))$. It was introduced by Eymard \cite{eymard} (see also \cite{eymard2}). More precisely, $\varphi \in A(G)$ if and only if there exist $\xi,\eta \in L^2(G)$ such that for all $g \in G$, we have $\varphi(g)=\langle \lambda(g)\xi,\eta \rangle$. The Fourier algebra $A(G)$ is a Banach space with respect to the norm defined by $\|\varphi\|_{A(G)}=\min \{ \|\xi\|\|\eta\| \mid \forall g \in G \; \varphi(g)=\langle \lambda(g)\xi,\eta \rangle \}$. We have $\|\varphi\|_{\infty} \leq \|\varphi\|_{A(G)}$ for all $\varphi \in A(G)$, and $A(G)$ is $\|.\|_{\infty}$-dense in $C_0(G)$. Eymard showed that $A(G)$ can be identified isometrically with the predual of the group von Neumann algebra $L(G)$ of $G$.

\subsection{Completely bounded Fourier multipliers on compact Gelfand pairs} \label{subsec:cbfmcgp}
A function $\varphi:G \rightarrow \mathbb{C}$ is said to be a Fourier multiplier if and only if $\varphi\psi \in A(G)$ for all $\psi \in A(G)$. Let $MA(G)$ denote the Banach space of multipliers of $A(G)$ equipped with the norm given by $\|\varphi\|_{MA(G)}=\|m_{\varphi}\|$, where $m_{\varphi}:A(G) \rightarrow A(G)$ denotes the associated multiplication operator. A multiplier $\varphi$ is said to be completely bounded if the operator $M_{\varphi}:L(G) \rightarrow L(G)$ induced by $m_{\varphi}$ is completely bounded. The space of completely bounded multipliers is denoted by $M_0A(G)$, and with the norm $\|\varphi\|_{M_0A(G)}=\|M_{\varphi}\|_{\cb}$, it forms a Banach space. It is known that $A(G) \subset M_0A(G) \subset MA(G)$.

It was proved by Bo\.zejko and Fendler in \cite{bozejkofendler1} that $\varphi \in M_0A(G)$ if and only if there exist bounded continuous maps $P,Q:G \rightarrow \mathcal{H}$, where $\mathcal{H}$ is a Hilbert space, such that $\varphi(g_2^{-1}g_1)=\langle P(g_1),Q(g_2) \rangle$ for all $g_1,g_2 \in G$. Here $\langle .,. \rangle$ denotes the inner product on $\mathcal{H}$. In this characterization, $\|\varphi\|_{M_0A(G)}=\min\{\|P\|_{\infty}\|Q\|_{\infty}\}$, where the minimum is taken over all possible pairs $(P,Q)$ for which $\varphi(g_2^{-1}g_1)=\langle P(g_1),Q(g_2) \rangle$ for all $g_1,g_2 \in G$.

Suppose now that $(G,K)$ is a compact Gelfand pair, i.e., the group $G$ is compact and $(G,K)$ is a Gelfand pair. Then for every irreducible representation $\pi$ on $\mathcal{H}$, the space $\mathcal{H}_e$ as defined in Section \ref{subsec:gelfandpairs} is at most one-dimensional. Let $P_{\pi}=\int_K \pi(k)dk$ denote the projection onto $\mathcal{H}_e$, and set $\hat{G}_K=\{ \pi \in \hat{G} \mid P_{\pi} \neq 0 \}$, where $\hat{G}$ denotes the unitary dual of $G$. We proved the following result in \cite[Proposition 2.3]{haagerupdelaat1}.
\begin{prp} \label{prp:cbfmcgp}
  Let $(G,K)$ be a compact Gelfand pair, and let $\varphi$ be a $K$-bi-invariant completely bounded Fourier multiplier. Then $\varphi$ has a unique decomposition $\varphi(g)=\sum_{\pi \in \hat{G}_K} c_{\pi}h_{\pi}(g)$ for all $g \in G$, where $h_{\pi}(g)=\langle \pi(g)\xi_{\pi},\xi_{\pi} \rangle$ is the positive definite spherical function associated with the representation $\pi$ with $K$-invariant cyclic vector $\xi_{\pi}$, and $\sum_{\pi \in \hat{G}_K} |c_{\pi}|=\|\varphi\|_{M_0A(G)}$.
\end{prp}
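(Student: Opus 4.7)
The plan is to obtain the spherical decomposition from Peter--Weyl theory and then to pin down the $M_0A(G)$-norm by passing through the Fourier algebra $A(G)$, using amenability of the compact group $G$. Since $\varphi\in M_0A(G)\subset C(G)$ and $G$ is compact, $\varphi\in L^2(G)$. The Peter--Weyl decomposition $L^2(G)=\bigoplus_{\pi\in\hat{G}}\mathcal{E}_\pi$, together with $K$-bi-invariance, isolates the subspaces $\mathcal{H}_\pi^K\otimes\overline{\mathcal{H}_\pi^K}$ of each isotypic component $\mathcal{E}_\pi$; the Gelfand pair hypothesis makes these at most one-dimensional, spanned by $h_\pi$ precisely when $\pi\in\hat{G}_K$. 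This yields an expansion $\varphi=\sum_{\pi\in\hat{G}_K}c_\pi h_\pi$ with coefficients uniquely determined by orthogonality, converging a priori only in $L^2$.

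The upper bound $\|\varphi\|_{M_0A(G)}\leq\sum_\pi|c_\pi|$ is elementary: each $h_\pi$ is positive definite with $h_\pi(e)=1$, so $\|h_\pi\|_{B(G)}=1$, and the inclusion $B(G)\hookrightarrow M_0A(G)$ is contractive. For the reverse bound, the plan is to identify $M_0A(G)$ with $A(G)$ isometrically and then to compute the $A(G)$-norm from the Peter--Weyl picture. On compact $G$ the structure theorem $\cs(G)\cong\bigoplus_\pi^{c_0}M_{d_\pi}(\bbC)$ yields $A(G)=L(G)_*=\cs(G)^{\ast}=B(G)$ as an $\ell^1$-direct sum of trace-class preduals, all with identical norms. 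Since a compact group is amenable, the known isometric equality $M_0A(G)=B(G)$ applies, so $M_0A(G)=A(G)$ isometrically. Under this identification the $K$-bi-invariance of $\varphi$ forces $\hat{\varphi}(\pi)=\lambda_\pi P_\pi$, where $P_\pi$ is the rank-one projection onto $\bbC\xi_\pi$, of trace norm $|\lambda_\pi|$; matching with the Fourier inversion identity $c_\pi=d_\pi\lambda_\pi$, we obtain $\|\varphi\|_{A(G)}=\sum_\pi d_\pi|\lambda_\pi|=\sum_\pi|c_\pi|$.

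The main obstacle is the isometric identification $M_0A(G)=B(G)$ for amenable $G$, a substantive (but well-known) consequence of amenability that genuinely goes beyond Peter--Weyl; once granted, the remainder is routine bookkeeping, with the one-dimensionality of $\mathcal{H}_\pi^K$ encoding the Gelfand pair hypothesis. A more direct alternative that avoids this input would be to construct, for each finite $F\subset\hat{G}_K$ and unimodular scalars $\alpha_\pi$ with $\alpha_\pi c_\pi=|c_\pi|$, an explicit element of $M_0A(G)_*$ of norm at most one pairing with $\varphi$ to give $\sum_{\pi\in F}|c_\pi|$, using the Bo\.{z}ejko--Fendler characterization together with the averaging projection $E_K\varphi(g)=\int_K\int_K\varphi(k_1 g k_2)\,dk_1 dk_2$.
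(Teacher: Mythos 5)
Your argument is correct and is essentially the proof given for this statement in \cite[Proposition 2.3]{haagerupdelaat1}, to which the present paper defers: there too the decomposition comes from Peter--Weyl together with the one-dimensionality of $\mathcal{H}_\pi^K$, and the norm identity $\sum_\pi |c_\pi| = \|\varphi\|_{M_0A(G)}$ is obtained from the isometric identifications $M_0A(G)=B(G)=A(G)$ for the compact (hence amenable) group $G$ and the $\ell^1$-structure of $A(G)$. The alternative route you sketch at the end is not needed.
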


\subsection{The Approximation Property} \label{subsec:ap}
We recall the definition and basic properties of the Approximation Property for groups (AP), as introduced by the first named author and Kraus \cite{haagerupkraus}.
\begin{dfn}
A locally compact group $G$ is said to have the Approximation Property for groups (AP) if there is a net $(\varphi_{\alpha})$ in $A(G)$ such that $\varphi_{\alpha} \to 1$ in the $\sigma(M_0A(G),M_0A(G)_*)$-topology, where $M_0A(G)_*$ denotes the natural predual of $M_0A(G)$ as introduced in \cite{decannierehaagerup} (see also \cite{haagerupkraus} and \cite{haagerupdelaat1}).
\end{dfn}
It was proved by the first named author and Kraus that if $G$ is a locally compact group and $\Gamma$ is a lattice in $G$, then $G$ has the AP if and only if $\Gamma$ has the AP \cite[Theorem 2.4]{haagerupkraus}. The AP passes to closed subgroups, as is proved in \cite[Proposition 1.14]{haagerupkraus}. Also, if $H$ is a closed normal subgroup of a locally compact group $G$ such that both $H$ and $G \slash H$ have the AP, then $G$ has the AP \cite[Theorem 1.15]{haagerupkraus}. Moreover, if $G_1$ and $G_2$ are two locally isomorphic connected simple Lie groups with finite center such that $G_1$ has the AP, then $G_2$ has the AP \cite[Proposition 2.4]{haagerupdelaat1}.

\subsection{Preliminaries for the results on noncommutative $L^p$-spaces} \label{subsec:nclps}
These preliminaries are only relevant for Section \ref{sec:nclpspaces}. For a more extensive account, we refer to \cite{ldls}, \cite{delaat1}. 

\subsubsection{Schur multipliers on Schatten classes}
For $p \in [1,\infty]$ and a Hilbert space $\mathcal{H}$, let $S^p(\mathcal{H})$ denote the $p^{\textrm{th}}$ Schatten class on $\mathcal{H}$. We identify $S^2(\mathcal{H})$ with $\mathcal{H}^* \otimes \mathcal{H}$, and for a $\sigma$-finite measure space $(X,\mu)$, we identify $L^2(X,\mu)^*$ with $L^2(X,\mu)$ by the duality bracket $\langle f,g \rangle=\int_X fg d\mu$. It follows that $S^2(L^2(X,\mu))$ can be identified with $L^2(X \times X,\mu \otimes \mu)$. Hence, every Schur multiplier on $S^2(L^2(X,\mu))$ comes from a function $\psi \in L^{\infty}(X \times X,\mu \otimes \mu)$ acting by multiplication on $L^2(X \times X,\mu \otimes \mu)$.
\begin{dfn} \label{dfn:multiplier}
  Let $p \in [1,\infty]$, and let $\psi \in L^{\infty}(X \times X,\mu \otimes \mu)$. The Schur multiplier with symbol $\psi$ is said to be bounded (resp.~completely bounded) on $S^p(L^2(X,\mu))$ if it maps $S^p(L^2(X,\mu)) \cap S^2(L^2(X,\mu))$ into $S^p(L^2(X,\mu))$ by $T_k \mapsto T_{\psi k}$ (where $T_k$ denotes the integral operator with kernel $k$), and if this map extends (necessarily uniquely) to a bounded (resp.~completely bounded) map $M_{\psi}$ on $S^p(L^2(X,\mu))$.
\end{dfn}
The norm of a bounded multiplier $\psi$ is defined by $\|\psi\|_{MS^p(L^2(X,\mu))}=\|M_{\psi}\|$, and its completely bounded norm by $\|\psi\|_{cbMS^p(L^2(X,\mu))}=\|M_{\psi}\|_{cb}$. The spaces of multipliers and completely bounded multipliers are denoted by $MS^p(L^2(X,\mu))$ and $cbMS^p(L^2(X,\mu))$, respectively. It follows that for every $p \in [1,\infty]$ and $\psi \in L^{\infty}(X \times X,\mu \otimes \mu)$, we have $\|\psi\|_{\infty} \leq \|\psi\|_{MS^p(L^2(X,\mu))} \leq \|\psi\|_{cbMS^p(L^2(X,\mu))}$.

\subsubsection{Schur multipliers on compact Gelfand pairs}
In this section, we recall results from \cite[Section 2]{delaat1} that are analogues in the setting of multipliers on Schatten classes of the results of Section \ref{subsec:cbfmcgp}. For proofs, we refer to \cite{delaat1}.

For a locally compact group $G$ and a function $\varphi \in L^{\infty}(G)$, we define the function $\check{\varphi} \in L^{\infty}(G \times G)$ by $\check{\varphi}(g_1,g_2)=\varphi(g_1^{-1}g_2)$.

In what follows, let $G$ and $K$ be Lie groups such that $(G,K)$ is a compact Gelfand pair. Let $X=G \slash K$ denote the homogeneous space corresponding with the canonical transitive action of $G$. The group $K$ is the stabilizer subgroup of a certain element $e_0 \in X$. It follows that $L^2(X)=\oplus_{\pi \in \hat{G}_K} \mathcal{H}_{\pi}$. Let $h_{\pi}$ denote the spherical function corresponding to the equivalence class $\pi$ of representations. Then for every $\varphi \in L^2(K \backslash G \slash K)$ we have $\varphi=\sum_{\pi \in \hat{G}_K} c_{\pi} \dim{\mathcal{H}_{\pi}} h_{\pi}$, where $c_{\pi}=\langle \varphi,h_{\pi} \rangle$. It also follows that for any $\varphi \in C(K \backslash G \slash K)$, there exists a continuous function $\psi:X \times X \rightarrow \bbC$ such that for all $g_1,g_2 \in G$, we have $\varphi(g_1^{-1}g_2)=\psi(g_1e_0,g_2e_0)$. Let $\varphi:G \rightarrow \bbC$ be a continuous $K$-bi-invariant function such that $\check{\varphi} \in cbMS^p(L^2(G))$ for some $p \in [1,\infty]$. Then $\|\psi\|_{cbMS^p(L^2(X))}=\|\check{\varphi}\|_{cbMS^p(L^2(G))}$, where $\psi:X \times X \rightarrow \bbC$ is as defined above. If $K$ is an infinite group, then these norms are equal to $\|\check{\varphi}\|_{MS^p(L^2(G))}$.

Let $(G,K)$ be a compact Gelfand pair, let $p \in [1,\infty)$, and let $\varphi:G \rightarrow \bbC$ be a continuous $K$-bi-invariant function such that $\check{\varphi} \in MS^p(L^2(G))$. Then $\left( \sum_{\pi \in \hat{G}_K} |c_{\pi}|^p (\dim{\mathcal{H}_{\pi}}) \right)^{\frac{1}{p}} \leq \|\check{\varphi}\|_{MS^p(L^2(G))}$, where $c_{\pi}$ and $\mathcal{H}_{\pi}$ are as before.

\subsubsection{The $\apschur$}
The $\apschur$ was defined in \cite{ldls}. Its relevance to us, including certain important properties, was described in Section \ref{sec:introduction}.
\begin{dfn}(see \cite[Definition 2.2]{ldls}) Let $G$ be a locally compact group, and let $1 \leq p \leq \infty$. The group $G$ is said to have the property of completely bounded approximation by Schur multipliers on $S^p$, denoted $\apschur$, if there exists a constant $C > 0$ and a net $(\varphi_{\alpha})$ in $A(G)$ such that $\varphi_{\alpha} \to 1$ uniformly on compacta and $\sup_{\alpha} \|\check{\varphi}_{\alpha}\|_{cbMS^p(L^2(G))} \leq C$. The infimum of these $C$'s is denoted by $\lambdaapschur (G)$.
\end{dfn}
It was proved by Lafforgue and de la Salle that if $G$ is a locally compact group and $\Gamma$ is a lattice in $G$, then for $1 \leq p \leq \infty$, we have $\Lambda_{p,\mathrm{cb}}^{\mathrm{Schur}}(\Gamma)=\Lambda_{p,\mathrm{cb}}^{\mathrm{Schur}}(G)$ (see \cite[Theorem 2.5]{ldls}). More properties of the $\apschur$ are discussed in \cite{ldls} and \cite{delaat1}.

\section{The group $\widetilde{\mathrm{Sp}}(2,\mathbb{R})$ does not have the AP} \label{sec:covsp2r}
In this section, we prove that the universal covering group $\widetilde{\mathrm{Sp}}(2,\mathbb{R})$ of $\mathrm{Sp}(2,\mathbb{R})$ does not have the AP. Hereto, let us first recall the definition of $\mathrm{Sp}(2,\mathbb{R})$ and describe a realization of $\widetilde{\Sp}(2,\bbR)$.

Let $I_2$ denote the $2 \times 2$ identity matrix, and let the matrix $J$ be defined by
\[
  J=\left( \begin{array}{cc} 0 & I_2 \\ -I_2 & 0 \end{array} \right).
\]
Recall that the symplectic group $\mathrm{Sp}(2,\mathbb{R})$ is defined as the Lie group
\[
	\mathrm{Sp}(2,\mathbb{R}):=\{g \in \mathrm{GL}(4,\mathbb{R}) \mid g^T J g = J\}.
\]
Here, $g^T$ denotes the transpose of $g$. Let $K$ denote the maximal compact subgroup of $\Sp(2,\bbR)$ given by
\[
  K= \bigg\{ \left( \begin{array}{cc} A & -B \\ B & A \end{array} \right) \in \mathrm{M}_{4}(\mathbb{R}) \biggm\vert A+iB \in \mathrm{U}(2) \bigg\}.
\]
This group is isomorphic to $\mathrm{U}(2)$. A polar decomposition of $\Sp(2,\bbR)$ is given by $\Sp(2,\bbR)=K\overline{A^{+}}K$, where
\[
	\overline{A^{+}}=\left\{D(\beta,\gamma)= \left( \begin{array}{cccc} e^{\beta} & 0 & 0 & 0 \\ 0 & e^{\gamma} & 0 & 0 \\ 0 & 0 & e^{-\beta} & 0 \\ 0 & 0 & 0 & e^{-\gamma} \end{array} \right) \Biggm\vert \beta \geq \gamma \geq 0\right\}.
\]
Different explicit realizations of $\widetilde{\mathrm{Sp}}(2,\mathbb{R})$ can be found in the literature. An incomplete list is given by \cite{lionvergne}, \cite{rawnsley}, \cite{wolfcovering}. We use the realization in terms of circle functions, given recently by Rawnsley \cite{rawnsley}, and in what follows we use of some of his computations. In fact, he describes a method that gives a realization of the universal covering group of any connected Lie group $G$ with fundamental group $\pi_1(G)$ isomorphic to $\mathbb{Z}$ admitting a so-called (normalized) circle function. Firstly, we briefly describe Rawnsley's general construction.

Let $G$ be a connected Lie group with $\pi_1(G) \cong \mathbb{Z}$. A circle function on $G$ is a smooth function $c:G \rightarrow \mathbb{T}$, where $\mathbb{T}$ denotes the circle (as a subspace of $\mathbb{C}$), that induces an isomorphism of the fundamental groups of $G$ and $\mathbb{T}$. Such a function is said to be normalized if $c(1)=1$ and $c(g^{-1})=c(g)^{-1}$. If $G$ admits a circle function, it admits one and only one normalized circle function.

Let $G$ be a connected Lie group with fundamental group isomorphic to $\mathbb{Z}$ that admits a normalized circle function. Then there exists a unique smooth function $\eta:G \times G \rightarrow \mathbb{R}$ such that
\[
  c(g_1g_2)=c(g_1)c(g_2)e^{i\eta(g_1,g_2)}
\]
for all $g_1,g_2 \in G$ and $\eta(1,1)=0$. Furthermore, it follows that $\eta(g,1)=\eta(1,g)=\eta(g,g^{-1})=0$ and $\eta(g_1,g_2)+\eta(g_1g_2,g_3)=\eta(g_1,g_2g_3)+\eta(g_2,g_3)$ for all $g \in G$ and $g_1,g_2,g_3 \in G$.

Let $G$ be a connected Lie group with normalized circle function $c$, and let
\begin{equation} \label{eq:uncov}
  \widetilde{G}=\{(g,t) \in G \times \mathbb{R} \mid c(g)=e^{it}\}.
\end{equation}
The space $\widetilde{G}$ is a smooth manifold of the same dimension as $G$. A multiplication on $\widetilde{G}$ is given by
\[
  (g_1,t_1)(g_2,t_2)=(g_1g_2,t_1+t_2+\eta(g_1,g_2)).
\]
With this multiplication, $\widetilde{G}$ is a Lie group with identity $\tilde{1}=(1,0)$, where $1$ denotes the identity element of $G$, and inverse given by $(g,t)^{-1}=(g^{-1},-t)$. The map $\sigma:\widetilde{G} \rightarrow G$, $(g,t) \mapsto g$ (with kernel $\{(1,2\pi k) \in G \times \mathbb{R} \mid k \in \mathbb{Z}\}$) defines a universal covering map from $\widetilde{G}$ onto $G$.\\

{\bf In the rest of this section, let $G=\Sp(2,\bbR)$ and $\widetilde{G}=\widetilde{\mathrm{Sp}}(2,\mathbb{R})$.}\\

We now give the explicit functions $c$ and $\eta$ for $\widetilde{\mathrm{Sp}}(2,\mathbb{R})$. Let $M_4(\bbR)_0$ denote the subspace of $M_4(\bbR)$ given by
\[
  M_4(\bbR)_0=\bigg\{ \left( \begin{array}{cc} A & -B \\ B & A \end{array} \right) \biggm\vert A,B \in M_2(\bbR) \bigg\},
\]
and let $\iota:M_4(\bbR)_0 \rightarrow M_2(\bbC)$ be given by
\[
  \iota:\left( \begin{array}{cc} A & -B \\ B & A \end{array} \right) \mapsto A+iB.
\]
The map $\iota$ is an algebra homomorphism. For an element $g \in G$, let $C_g=\frac{1}{2}(g+(g^T)^{-1})$ and $D_g=\frac{1}{2}(g-(g^T)^{-1})$. Note that $g=C_g+D_g$. As described by Rawnsley, the connected Lie group $G$ admits a normalized circle function; namely, the function $c:G \rightarrow \mathbb{T}$ given by
\begin{equation} \label{eq:circlefunction}
  c(g)=\frac{\det(\iota(C_g))}{|\det(\iota(C_g))|}.
\end{equation}
With this circle function, the manifold $\widetilde{G}$ is given through \eqref{eq:uncov}. Let $Z_g=C_g^{-1}D_g$. The function $\eta$ (which is needed to define the multiplication on $\widetilde{G}$) corresponding to the circle function $c$ is given by
\[
  \eta(g_1,g_2)=\im(\Tr(\iota(\log(1-Z_{g_1}Z_{g_2^{-1}}))))=\im(\Tr(\iota(\log(C_{g_1}^{-1}C_{g_1g_2}C_{g_2}^{-1})))).
\]
The logarithm is well-defined, since $\|Z_{g_1}Z_{g_2^{-1}}\| < 1$ (see \cite[Section 4]{rawnsley}). It was also proved by Rawnsley that $|\eta(g_1,g_2)| < \pi$ for all $g_1,g_2 \in G$ (see \cite[Lemma 14]{rawnsley}).
\begin{rmk}
  Everything that we described so far for $G$ can be generalized to $\Sp(n,\bbR)$ for $n \geq 1$ (see \cite{rawnsley}).
\end{rmk}
The rest of this section is devoted to proving the following theorem.
\begin{thm} \label{thm:covsp2noap}
	The group $\widetilde{G}=\widetilde{\Sp}(2,\mathbb{R})$ does not have the AP.
\end{thm}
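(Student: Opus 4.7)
Assume for contradiction that $\widetilde{G}$ has the AP. Then there is a net $(\varphi_{\alpha})$ in $A(\widetilde{G})$ with $\sup_{\alpha}\|\varphi_{\alpha}\|_{M_{0}A(\widetilde{G})} \leq C$ for some $C > 0$ and $\varphi_{\alpha} \to 1$ uniformly on compacta. The key new difficulty compared to the authors' first paper on $\Sp(2,\bbR)$ is that the preimage $\widetilde{K} := \sigma^{-1}(K)$ of the maximal compact subgroup $K \cong \U(2)$ of $G$ is non-compact: the inclusion $K \hookrightarrow G$ is a homotopy equivalence, so $\widetilde{K}$ is the universal cover of $\U(2)$, isomorphic to $\SU(2) \times \bbR$. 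Consequently $(\widetilde{G},\widetilde{K})$ is not a compact Gelfand pair and Proposition \ref{prp:cbfmcgp} does not apply directly. However, the subgroup $\SU(2) \subset K$ is simply connected and hence lifts to a closed compact subgroup (still denoted $\SU(2)$) of $\widetilde{G}$. Averaging each $\varphi_{\alpha}$ against bi-translation by $\SU(2)$ does not increase the $M_{0}A$-norm, so we may assume each $\varphi_{\alpha}$ is $\SU(2)$-bi-invariant.

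The plan is then to adapt the harmonic-analytic tools of the first paper to the present non-compact setting, with the aid of Rawnsley's explicit realization \eqref{eq:uncov} of $\widetilde{G}$. Using the polar decomposition $G = K \overline{A^{+}} K$ combined with the lift data provided by the circle function $c$ and the cocycle $\eta$, one describes the orbits of the $\SU(2) \times \SU(2)$-action on $\widetilde{G}$ in terms of an $\overline{A^{+}}$-component $D(\beta,\gamma)$ together with additional continuous parameters coming from the non-compact direction of $\widetilde{K}$. To obtain quantitative control, we view the $\SU(2)$-bi-invariance through the compact Gelfand pair $(\SU(2) \times \SU(2), \Delta\SU(2))$ --- equivalently, through the strong Gelfand pair $(\SU(2), T)$ developed in Appendix \ref{sec:sgp} --- and apply the appropriate analogue of Proposition \ref{prp:cbfmcgp}. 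This yields, on each compact transversal slice, an expansion of $\varphi_{\alpha}$ in spherical functions with coefficient sequences whose $\ell^{1}$-norms are bounded uniformly by $C$.

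The final step is an asymptotic argument in the same spirit as in the $\Sp(2,\bbR)$ paper. One constructs sequences $g_{m} \in \widetilde{G}$ lifting elements $k_{1}D(\beta_{m},\gamma_{m})k_{2} \in G$, with $\beta_{m},\gamma_{m} \to \infty$ along a carefully chosen ray in $\overline{A^{+}}$ and with the lift tuned via $\eta$ so that the cocycle contribution becomes tractable. Along these sequences, explicit estimates on the relevant spherical functions combined with the uniform $\ell^{1}$-control on the coefficients force $\varphi_{\alpha}(g_{m})$ to remain in a compact subset of $\bbC$ whose closure does not contain $1$; combined with $\varphi_{\alpha}(g_{m}) \to 1$ for each fixed $m$, this yields the desired contradiction. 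The main obstacle --- and the essential departure from the $\Sp(2,\bbR)$ case --- is precisely that $\widetilde{K}$ is non-compact, so the role that $\widetilde{K}$-bi-invariance played in the first paper must be split between averaging over the compact subgroup $\SU(2)$ and an explicit computation in the non-compact $\bbR$-direction of $\widetilde{K}$; handling the latter is where Rawnsley's functions $c$ and $\eta$, together with the strong Gelfand pair formalism of the Appendix, carry the proof.
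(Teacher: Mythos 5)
Your opening step contains the decisive error: the AP does \emph{not} provide a net $(\varphi_{\alpha})$ in $A(\widetilde{G})$ with $\sup_{\alpha}\|\varphi_{\alpha}\|_{M_0A(\widetilde{G})}\leq C$ converging to $1$ uniformly on compacta --- that is essentially the definition of weak amenability, which is strictly stronger than the AP and whose failure for $\widetilde{\Sp}(2,\bbR)$ was already known from Dorofaeff's work. The AP only gives convergence in the $\sigma(M_0A(\widetilde{G}),M_0A(\widetilde{G})_{*})$-topology of a net carrying no uniform norm bound. Consequently your final step --- ``uniform $\ell^{1}$-control on the coefficients'' forcing $\varphi_{\alpha}(g_m)$ away from $1$ --- has nothing to feed on: the $\ell^{1}$-norms of the spherical coefficient sequences are controlled by $\|\varphi_{\alpha}\|_{M_0A(\widetilde{G})}$, which may blow up along the net. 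The essential idea missing from your proposal is the paper's way around this: one proves a quantitative decay estimate $|\dot{\varphi}(\beta,\gamma,t)-c_{\varphi}(t)|\leq C_1e^{-C_2\sqrt{\beta^2+\gamma^2}}\|\varphi\|_{M_0A(\widetilde{G})}$ (Proposition \ref{prp:sp2ab}), with the norm explicitly on the right-hand side, and then shows that the subspace $\{\varphi\in M_0A(\widetilde{G})\cap\mathcal{C}\mid c_{\varphi}\equiv 0\}$ is $\sigma(M_0A(\widetilde{G}),M_0A(\widetilde{G})_{*})$-closed by combining the $\sigma(L^{\infty},L^{1})$-closedness of sets cut out by such pointwise bounds with the Krein--Smulian theorem (Lemma \ref{lem:cmultipliersclosed}). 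Since every $\varphi\in A(\widetilde{G})\cap\mathcal{C}$ vanishes at infinity and hence lies in this closed subspace, while the constant function $1$ has $c_{1}\equiv 1$, the constant function cannot be a weak-$*$ limit of such a net. Without this closedness argument there is no route from your estimates to a contradiction with the AP.

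A secondary gap: averaging only over bi-translation by the lift of $\SU(2)$ does not reduce to a tractable parameter family. The class $\mathcal{C}$ in the paper additionally imposes $\mathrm{Int}(\widetilde{K})$-invariance, i.e.\ invariance under conjugation by the lifted one-parameter family $\widetilde{v}_t$ in the centre of $\widetilde{K}$; this extra averaging (over $\mathbb{R}/\pi\mathbb{Z}$, which is compact, hence harmless for the norm) is what makes functions in $\mathcal{C}$ determined by the three parameters $(\beta,\gamma,t)$, and it is used repeatedly (Lemma \ref{lem:cinvariance}) to shift the $t$-coordinate in the comparison estimates. Your sketch of the asymptotic step along rays in $\overline{A^{+}}$ is otherwise in the right spirit --- the paper does use the strong Gelfand pair $(\SU(2),\U(1))$ with disc polynomials and the Gelfand pair $(\SU(2),\SO(2))$ with Legendre polynomials, together with Rawnsley's $c$ and $\eta$ to track the covering coordinate --- but one further ingredient you do not mention is needed: one must show that the asymptotic limit $c_{\varphi}(t)$ is in fact independent of $t$ (Proposition \ref{prp:tdependence}), since a priori it could depend on the direction in the non-compact part of $\widetilde{K}$.
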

Firstly, we elaborate on the structure of $\widetilde{G}$. Let $\mathfrak{g}$ denote the Lie algebra of $G$ and $\widetilde{G}$, and denote by $\exp:\mathfrak{g} \rightarrow G$ and $\widetilde{\exp}:\mathfrak{g} \rightarrow \widetilde{G}$ the corresponding exponential maps. These exponential maps have as their image a neighbourhood of the identity. The group $\widetilde{G}$ has a polar decomposition (see Section \ref{subsec:kakdecomposition}) $\widetilde{G}=\widetilde{K}\overline{\widetilde{A}^{+}}\widetilde{K}$ that is strongly related to the polar decomposition $G=KAK$ of $G$. It is known that the exponential map of a connected simple Lie group is a bijection from the $\mathfrak{a}$-summand of the $KAK$-decomposition on the Lie algebra level to $A$. Therefore, it follows that $\widetilde{A} \cong A$. This implies that the ``infinite covering'' part of $G$ is intrinsic to the $K$-part of the polar decomposition. It is known that $\exp:\mathfrak{k} \rightarrow K$ is surjective, because $K$ is connected and compact. Also, since $\mathfrak{k} = \mathfrak{su}(2) \oplus \mathbb{R}$ (see \cite[Lemma 9]{rawnsley}), it follows that $\widetilde{\exp}:\mathfrak{k} \rightarrow \widetilde{K}$ is surjective. We summarize these facts (based on \cite[Section IX.1]{helgasonlie}) in the following proposition.
\begin{prp} \label{prp:kakdecomposition}
  We have $G=KAK$ and $\widetilde{G}=\widetilde{K}\widetilde{A}\widetilde{K}$, where $K$ and $A$ are as above, and
\begin{equation} \nonumber
\begin{split}
  K=\exp(\mathfrak{k}), \qquad &A=\exp(\mathfrak{a}),\\
  \widetilde{K}=\widetilde{\exp}(\mathfrak{k}), \qquad & \widetilde{A}=\widetilde{\exp}(\mathfrak{a}).
\end{split}
\end{equation}
Here, $\mathfrak{k}$ and $\mathfrak{a}$ denote the Lie algebras of $K$ and $A$, respectively. The group $\widetilde{A}$ is isomorphic to $A$. We can restrict to the positive Weyl chamber, and get
\[
  \overline{\widetilde{A}^{+}}=\widetilde{\exp}(\{\diag(e^{\beta},e^{\gamma},e^{-\beta},e^{-\gamma}) \mid \beta \geq \gamma \geq 0\}),
\]
which yields the decomposition $\widetilde{G}=\widetilde{K}\overline{\widetilde{A}^{+}}\widetilde{K}$.
\end{prp}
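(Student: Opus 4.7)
My plan is to derive both decompositions from the classical Cartan/polar decomposition theory for $G$, and then lift everything through the universal covering map $\sigma:\widetilde{G}\to G$. For $G$ itself, I would invoke Helgason~\cite[Section~IX.1]{helgasonlie}: from the Cartan decomposition $\mathfrak{g}=\mathfrak{k}\oplus\mathfrak{p}$ and a maximal abelian $\mathfrak{a}\subset\mathfrak{p}$, one has $G=K\exp(\mathfrak{p})$, and since every $\operatorname{Ad}(K)$-orbit in $\mathfrak{p}$ meets $\mathfrak{a}$, it follows that $G=K\exp(\mathfrak{a})K=KAK$. The equality $K=\exp(\mathfrak{k})$ holds because $K\cong\U(2)$ is compact and connected; the equality $A=\exp(\mathfrak{a})$ holds because $\exp\vert_{\mathfrak{a}}$ is a Lie group homomorphism from the abelian $\mathfrak{a}$ onto the connected subgroup $A$, and injectivity (for simple $G$) shows it is a diffeomorphism onto $A\cong\bbR^2$. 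The refinement $G=K\overline{A^{+}}K$, with unique $A$-component in $\overline{A^{+}}$, is the standard Weyl-chamber argument.

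For the universal cover, the crucial observation is that $\widetilde{\exp}\vert_{\mathfrak{a}}$ is a Lie group homomorphism from the abelian $\mathfrak{a}$ whose composition with $\sigma$ is the diffeomorphism $\exp\vert_{\mathfrak{a}}:\mathfrak{a}\to A$. Hence $\widetilde{\exp}\vert_{\mathfrak{a}}$ is injective, its image $\widetilde{A}:=\widetilde{\exp}(\mathfrak{a})$ is an abelian subgroup of $\widetilde{G}$, and $\sigma$ restricts to a Lie group isomorphism $\widetilde{A}\to A$. I would define $\widetilde{K}$ as the analytic subgroup of $\widetilde{G}$ with Lie algebra $\mathfrak{k}$; since $\mathfrak{k}\cong\mathfrak{su}(2)\oplus\bbR$ by \cite[Lemma~9]{rawnsley}, and since the exponential map of each summand is surjective (compactness and connectedness of $\SU(2)$; triviality for the $\bbR$-factor), it follows that $\widetilde{K}=\widetilde{\exp}(\mathfrak{k})$.

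To obtain the global decomposition $\widetilde{G}=\widetilde{K}\overline{\widetilde{A}^{+}}\widetilde{K}$, I would take $\widetilde{g}\in\widetilde{G}$, write $\sigma(\widetilde{g})=k_1 a k_2$ with $k_1,k_2\in K$ and $a\in\overline{A^{+}}$ via the $G$-side decomposition, and lift $k_1,k_2$ to $\widetilde{k}_1,\widetilde{k}_2\in\widetilde{K}$ via the surjectivity of $\sigma\vert_{\widetilde{K}}:\widetilde{K}\to K$. Setting $\widetilde{h}:=\widetilde{k}_1^{-1}\widetilde{g}\widetilde{k}_2^{-1}$, we have $\sigma(\widetilde{h})=a$, so $\widetilde{h}\in\sigma^{-1}(A)=\widetilde{A}\cdot\ker\sigma$. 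Writing $\widetilde{h}=\widetilde{a}\cdot z$ with $\widetilde{a}\in\widetilde{A}$ and $z\in\pi_1(G)$, and using that $\pi_1(G)\subset Z(\widetilde{G})\cap\widetilde{K}$, we absorb $z$ into $\widetilde{k}_1$ and conclude $\widetilde{g}\in\widetilde{K}\widetilde{a}\widetilde{K}$ with $\widetilde{a}\in\overline{\widetilde{A}^{+}}:=\widetilde{\exp}(\overline{\mathfrak{a}^{+}})$, where $\overline{\mathfrak{a}^{+}}=\{\diag(\beta,\gamma,-\beta,-\gamma)\mid \beta\geq\gamma\geq 0\}$.

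The main technical point to justify is the inclusion $\pi_1(G)\subset\widetilde{K}$: it follows from the Iwasawa decomposition $G=KAN$ (with $AN$ diffeomorphic to a Euclidean space) that the inclusion $K\hookrightarrow G$ is a homotopy equivalence, so every generator of $\pi_1(G)$ is represented by a loop in $K$, and hence its lift to $\widetilde{G}$ starting at $\tilde{1}$ lies in $\widetilde{K}$. Centrality of $\pi_1(G)=\ker\sigma$ in $\widetilde{G}$ is immediate from the fact that $\sigma$ is a covering homomorphism with connected source. Once this is in place, the remaining verification is purely bookkeeping with the covering map.
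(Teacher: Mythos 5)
Your argument is correct, and the auxiliary facts you use are exactly the ones the paper relies on: Helgason \cite[Section IX.1]{helgasonlie} for the polar decomposition of $G$, bijectivity of $\exp\vert_{\mathfrak{a}}$ to get $\widetilde{A}\cong A$, and the splitting $\mathfrak{k}=\mathfrak{su}(2)\oplus\bbR$ from \cite[Lemma 9]{rawnsley} to get surjectivity of $\widetilde{\exp}$ onto $\widetilde{K}$. Where you genuinely diverge is in how you obtain the global decomposition $\widetilde{G}=\widetilde{K}\overline{\widetilde{A}^{+}}\widetilde{K}$: the paper simply applies the polar decomposition theorem for connected semisimple Lie groups directly to $\widetilde{G}$ (the statement in its Section 2.2 is deliberately phrased without a finite-center hypothesis, so it covers $\widetilde{G}$ as is), whereas you lift the decomposition of $G$ through the covering map $\sigma$. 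Your route is more self-contained and makes visible exactly where the covering data enters, but it obliges you to prove the extra fact $\ker\sigma\subset Z(\widetilde{G})\cap\widetilde{K}$; you do this correctly (discrete normal subgroup of a connected group is central; $K\hookrightarrow G$ is a homotopy equivalence by the Iwasawa or Cartan decomposition, and $\sigma\vert_{\widetilde{K}}:\widetilde{K}\to K$ is itself a covering, so lifts of loops in $K$ stay in $\widetilde{K}$). The paper's route avoids this lemma entirely at the cost of quoting a more general structure theorem. One cosmetic remark: your description of $\overline{\widetilde{A}^{+}}$ as $\widetilde{\exp}$ of the Lie algebra elements $\diag(\beta,\gamma,-\beta,-\gamma)$ with $\beta\geq\gamma\geq 0$ is in fact the more accurate formulation; the displayed formula in the proposition applies $\widetilde{\exp}$ to the group elements $\diag(e^{\beta},e^{\gamma},e^{-\beta},e^{-\gamma})$, which should be read as the same abuse of notation.
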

Note that the group $\mathrm{SU}(2)$ is a natural subgroup of $\mathrm{U}(2)$. Denote by $H$ the corresponding subgroup of $K$. We also get a corresponding group $\widetilde{H}$, which is isomorphic to $H$, since $\SU(2)$ is simply connected. 
\begin{dfn}
    We define $\mathcal{C}$ to be the following class of functions:
\[
  \mathcal{C}:=\{\varphi \in C(\widetilde{G}) \mid \varphi \textrm{ is }\widetilde{H}\textrm{-bi-invariant and }\mathrm{Int}(\widetilde{K})\textrm{-invariant}\}.
\]
\end{dfn}
We refer to Section \ref{subsec:gelfandpairs} for the notions of $\widetilde{H}$-bi-invariant and $\mathrm{Int}(\widetilde{K})$-invariant functions. In the notation used in that section, we have $\mathcal{C}=C(\widetilde{H} \backslash \widetilde{G} / \widetilde{H}) \cap C(\widetilde{G}//\widetilde{K})$.

Consider the generator $\begin{pmatrix} i & 0 \\ 0 & i \end{pmatrix}$ of the Lie algebra of the center of $\U(2)$. Let $Z$ denote the corresponding element of $\mathfrak{k}$. The elements $v_t=\exp(tZ)$ and $\widetilde{v}_t=\widetilde{\exp}(tZ)$ for $t \in \mathbb{R}$ are elements of the centers of $K$ and $\widetilde{K}$, respectively. Also, the family $v_t$ is periodic with period $2\pi$. Explicitly, we have
\[
  v_t=\begin{pmatrix} \cos t & 0 & -\sin t & 0 \\ 0 & \cos t & 0 & -\sin t \\ \sin t & 0 & \cos t & 0 \\ 0 & \sin t & 0 & \cos t \end{pmatrix}.
\]
\begin{rmk} \label{rmk:alternativecharacterizationc}
  Every $k \in K$ can be written as the product $k=v_th$ for some $t \in \mathbb{R}$ and $h \in H$, and, similarly, every $\widetilde{k} \in \widetilde{K}$ can be written as the product $\widetilde{k}=\widetilde{v}_t\widetilde{h}$ for some $t \in \mathbb{R}$ and $\widetilde{h} \in \widetilde{H}$. Hence, the class $\mathcal{C}$ can also be defined in the following way:
\[
  \mathcal{C}:=\{\varphi \in C(\widetilde{G}) \mid \varphi \textrm{ is }\widetilde{H}\textrm{-bi-invariant and }\varphi(\widetilde{v}_tg\widetilde{v}_t^{-1})=\varphi(g)\,\forall g \in \widetilde{G}\,\forall t \in \mathbb{R}\}.
\]
\end{rmk}
For $\beta \geq \gamma \geq 0$, let $D(\beta,\gamma)=\diag(e^{\beta},e^{\gamma},e^{-\beta},e^{-\gamma}) \in G$, which is, as pointed out before, an element of $\overline{{A}^{+}}$. Since $\widetilde{A} \cong A$, there is one and only one element $\widetilde{D}(\beta,\gamma)$ in $\overline{\widetilde{A}^{+}}$ that surjects onto $D(\beta,\gamma) \in G$. We now show that functions in $\mathcal{C}$ are completely determined by their values at elements of the form $\widetilde{v}_t\widetilde{D}(\beta,\gamma)$. Firstly, let us prove the following lemma.
\begin{lem} \label{lem:exptzdbetagamma}
  In the realization of \eqref{eq:uncov}, we have $\widetilde{v}_t\widetilde{D}(\beta,\gamma)=(v_tD(\beta,\gamma),2t)$ for $\beta \geq \gamma \geq 0$ and $t \in \mathbb{R}$.
\end{lem}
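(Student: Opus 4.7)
The strategy is to identify $\widetilde{v}_t$ and $\widetilde{D}(\beta,\gamma)$ explicitly in the realisation \eqref{eq:uncov}, and then combine them through the cocycle $\eta$. First, since $v_t \in K$ is orthogonal ($v_t^T = v_{-t} = v_t^{-1}$), we have $C_{v_t} = v_t$; writing $v_t$ in the block form of $M_4(\bbR)_0$ with $A = (\cos t)I_2$ and $B = (\sin t)I_2$ yields $\iota(C_{v_t}) = e^{it}I_2$, hence $c(v_t) = e^{2it}$. Because $t \mapsto \widetilde{v}_t = \widetilde{\exp}(tZ)$ is a continuous one-parameter subgroup of $\widetilde{G}$ projecting onto $t \mapsto v_t$ and passing through $\tilde{1} = (1,0)$, its $\mathbb{R}$-coordinate $s(t)$ must satisfy $e^{is(t)} = e^{2it}$ and $s(0) = 0$; by continuity, $s(t) = 2t$, so $\widetilde{v}_t = (v_t, 2t)$. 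An analogous continuity argument along the one-parameter subgroup $t \mapsto \widetilde{\exp}(t\diag(\beta,\gamma,-\beta,-\gamma))$ in $\widetilde{A}$ gives $\widetilde{D}(\beta,\gamma) = (D(\beta,\gamma),0)$: every $a \in A$ is a symmetric positive diagonal matrix, so $C_a = \tfrac{1}{2}(a+a^{-1})$ is a positive diagonal matrix, $\iota(C_a)$ is a positive diagonal $2\times 2$ matrix, and $c \equiv 1$ on $A$.

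Next, I would show that $\eta(v_t, D(\beta,\gamma)) = 0$. Writing $D = D(\beta,\gamma)$ and $E = \diag(e^\beta,e^\gamma)$, a block-matrix computation gives
\[
    v_t D = \begin{pmatrix} (\cos t)E & -(\sin t)E^{-1} \\ (\sin t)E & (\cos t)E^{-1} \end{pmatrix}, \qquad ((v_t D)^T)^{-1} = \begin{pmatrix} (\cos t)E^{-1} & -(\sin t)E \\ (\sin t)E^{-1} & (\cos t)E \end{pmatrix}.
\]
Averaging and setting $F = \diag(\cosh\beta,\cosh\gamma)$, one obtains $\iota(C_{v_t D}) = e^{it}F$. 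Together with $\iota(C_{v_t}) = e^{it}I_2$ and $\iota(C_D) = F$, and using that $\iota$ is an algebra homomorphism on $M_4(\bbR)_0$, this yields $\iota(C_{v_t}^{-1} C_{v_t D} C_D^{-1}) = I_2$. Hence $\log(C_{v_t}^{-1} C_{v_t D} C_D^{-1}) = 0$, and the displayed formula for $\eta$ gives $\eta(v_t, D(\beta,\gamma)) = 0$.

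Finally, combining via the multiplication rule on $\widetilde{G}$ yields
\[
    \widetilde{v}_t \widetilde{D}(\beta,\gamma) = (v_t, 2t)(D(\beta,\gamma), 0) = \bigl(v_t D(\beta,\gamma),\; 2t + \eta(v_t, D(\beta,\gamma))\bigr) = (v_t D(\beta,\gamma), 2t),
\]
as claimed. The only real obstacle is the block-matrix bookkeeping needed to verify the multiplicativity $\iota(C_{v_t D}) = \iota(C_{v_t})\iota(C_D)$ in the second step, which is the identity that forces $\eta$ to vanish on this particular pair of elements and reflects the fact that $v_t$ and $D(\beta,\gamma)$ sit in complementary parts of the polar decomposition.
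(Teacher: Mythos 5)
Your proof is correct, and the underlying computation is the same one the paper relies on, but you organize the argument differently. The paper works directly with the product element inside the realization \eqref{eq:uncov}: it observes that $\widetilde{v}_t\widetilde{D}(\beta,\gamma)=(v_tD(\beta,\gamma),s)$ for some $s$ with $e^{is}=c(v_tD(\beta,\gamma))$, computes $\iota(C_{v_tD(\beta,\gamma)})=\iota(v_t)\diag(\cosh\beta,\cosh\gamma)$ (your block computation, exactly), reads off $c(v_tD(\beta,\gamma))=e^{2it}$, and then fixes the branch $s=2t$ by a single connectivity argument along the family $\widetilde{v}_\sigma\widetilde{D}(\beta,\gamma)$. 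You instead pin down each factor separately --- $\widetilde{v}_t=(v_t,2t)$ and $\widetilde{D}(\beta,\gamma)=(D(\beta,\gamma),0)$, each by a continuity argument along a one-parameter subgroup --- and then invoke the explicit cocycle formula to check $\eta(v_t,D(\beta,\gamma))=0$ via $\iota(C_{v_t}^{-1}C_{v_tD}C_D^{-1})=I_2$, which kills the logarithm. Your route costs slightly more (two continuity arguments plus the $\eta$-computation instead of one of each), but it is more structural: it records the coordinates of the individual factors, which are reusable facts, and it makes transparent that the correction term $\eta$ vanishes precisely because $C_{v_tD}=C_{v_t}C_D$ for this pair. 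One small point worth making explicit: to conclude $\log(C_{v_t}^{-1}C_{v_tD}C_D^{-1})=0$ from $\iota(\cdot)=I_2$ you should note that all three factors lie in the subalgebra $M_4(\bbR)_0$ on which $\iota$ is injective (or simply that $\Tr(\iota(\log(\cdot)))=\Tr(\log(\iota(\cdot)))=0$ already suffices for the formula for $\eta$). Both arguments are sound.
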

\begin{proof}
  By the description of $\widetilde{G}$ and the fact that the covering map is a homomorphism, it follows that $\widetilde{v}_t\widetilde{D}(\beta,\gamma)=(v_tD(\beta,\gamma),s)$ for some $s \in \mathbb{R}$. Using that $(v_t^T)^{-1}=v_t$ and that $\iota$ is an algebra homomorphism, it follows that $\iota(C_{v_tD(\beta,\gamma)})= \iota(\frac{1}{2}v_t(D(\beta,\gamma)+D(-\beta,-\gamma)))=\iota(v_t)\diag(\cosh(\beta),\cosh(\gamma))$. Hence,
\[
  c(\widetilde{v}_t\widetilde{D}(\beta,\gamma))=\frac{\det(\iota(v_t))}{|\det(\iota(v_t))|},
\]
because $\det(\diag(\cosh(\beta),\cosh(\gamma)))=|\det(\diag(\cosh(\beta),\cosh(\gamma)))|$ and the determinant is multiplicative. Using the fact that $\{\widetilde{v}_\sigma\widetilde{D}(\beta,\gamma) \mid \sigma \in \mathbb{R}\}$ defines a continuous path in $\widetilde{G}$, the value of $s$ is computed by
\[
  s = \tan^{-1} \left(\frac{2\sin t \cos t}{\cos^2t-\sin^2t}\right) + 2k\pi=2t + 2k\pi
\]
for some $k \in \mathbb{Z}$. Since we can connect every element $\widetilde{v}_\sigma\widetilde{D}(\beta,\gamma)$ continuously to $\widetilde{v}_0=\widetilde{1}=(1,0)$ (by varying $\sigma$, $\beta$ and $\gamma$), it follows that $k=0$. Hence, $s=2t$.
\end{proof}
\begin{lem}
  A function in $\mathcal{C}$ is determined by its values at the elements of the form $\widetilde{v}_t\widetilde{D}(\beta,\gamma)$.
\end{lem}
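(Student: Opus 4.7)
The plan is to start from an arbitrary $g \in \widetilde{G}$ and successively apply the polar decomposition, Remark \ref{rmk:alternativecharacterizationc}, the fact that $\widetilde{v}_t$ lies in the centre of $\widetilde{K}$, the $\widetilde{H}$-bi-invariance and the $\mathrm{Int}(\widetilde{K})$-invariance to reduce $\varphi(g)$ to a value at an element of the form $\widetilde{v}_t\widetilde{D}(\beta,\gamma)$.

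More precisely, I would first invoke Proposition \ref{prp:kakdecomposition} to write $g = \widetilde{k}_1 \widetilde{D}(\beta,\gamma) \widetilde{k}_2$ with $\widetilde{k}_1,\widetilde{k}_2 \in \widetilde{K}$ and $\beta \geq \gamma \geq 0$. Next, by Remark \ref{rmk:alternativecharacterizationc}, each $\widetilde{k}_i$ factors as $\widetilde{k}_i = \widetilde{v}_{t_i}\widetilde{h}_i$ for some $t_i \in \mathbb{R}$ and $\widetilde{h}_i \in \widetilde{H}$. Since $\widetilde{v}_{t_1}$ lies in the centre of $\widetilde{K}$ and $\widetilde{h}_1 \in \widetilde{K}$, we have $\widetilde{v}_{t_1}\widetilde{h}_1 = \widetilde{h}_1\widetilde{v}_{t_1}$, so
\[
  g = \widetilde{h}_1\,\widetilde{v}_{t_1}\widetilde{D}(\beta,\gamma)\widetilde{v}_{t_2}\,\widetilde{h}_2.
\]
The $\widetilde{H}$-bi-invariance of $\varphi$ then strips off $\widetilde{h}_1$ and $\widetilde{h}_2$, yielding $\varphi(g) = \varphi\bigl(\widetilde{v}_{t_1}\widetilde{D}(\beta,\gamma)\widetilde{v}_{t_2}\bigr)$.

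Finally I apply $\mathrm{Int}(\widetilde{K})$-invariance with the central element $\widetilde{v}_{t_2} \in \widetilde{K}$: conjugating by $\widetilde{v}_{t_2}$ commutes through $\widetilde{v}_{t_1}$ and $\widetilde{D}(\beta,\gamma)$ on the left (the former since $\widetilde{v}_{t_2}$ is central in $\widetilde{K}$, the latter trivially because $\widetilde{v}_{t_2}$ on the left combines with $\widetilde{v}_{t_1}$ and $\widetilde{v}_{-t_2}$ on the right cancels the remaining $\widetilde{v}_{t_2}$), producing
\[
  \widetilde{v}_{t_2}\bigl(\widetilde{v}_{t_1}\widetilde{D}(\beta,\gamma)\widetilde{v}_{t_2}\bigr)\widetilde{v}_{t_2}^{-1} = \widetilde{v}_{t_1+t_2}\widetilde{D}(\beta,\gamma).
\]
Hence $\varphi(g) = \varphi\bigl(\widetilde{v}_{t_1+t_2}\widetilde{D}(\beta,\gamma)\bigr)$, which is a value of $\varphi$ at an element of the claimed form.

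The argument is essentially a bookkeeping exercise once the centrality of $\widetilde{v}_t$ in $\widetilde{K}$ is in hand; the only mild subtlety is recognising that one cannot use $\widetilde{H}$-bi-invariance alone to move the rightmost $\widetilde{v}_{t_2}$ (as $\widetilde{v}_{t_2} \notin \widetilde{H}$ in general), and that the $\mathrm{Int}(\widetilde{K})$-invariance is precisely what is needed to absorb it into a single central factor on the left. The uniqueness of $\widetilde{D}(\beta,\gamma)$ (coming from $\widetilde{A} \cong A$ and the uniqueness in the $KAK$-decomposition on the positive Weyl chamber) is implicit in the formulation, so no further work is required to identify the reduced expression.
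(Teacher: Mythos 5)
Your proof is correct and follows exactly the paper's argument: polar decomposition $g=\widetilde{k}_1\widetilde{D}(\beta,\gamma)\widetilde{k}_2$, the factorisation $\widetilde{k}_i=\widetilde{v}_{t_i}\widetilde{h}_i=\widetilde{h}_i\widetilde{v}_{t_i}$, $\widetilde{H}$-bi-invariance to remove the $\widetilde{h}_i$, and $\mathrm{Int}(\widetilde{K})$-invariance (conjugation by the central element $\widetilde{v}_{t_2}$) to arrive at $\varphi(\widetilde{v}_{t_1+t_2}\widetilde{D}(\beta,\gamma))$. You merely spell out the bookkeeping that the paper compresses into the phrase ``using both invariance properties.''
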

\begin{proof}
Let $\varphi \in \mathcal{C}$, and let $g \in \widetilde{G}$. By the polar decomposition of $\widetilde{G}$, we can write $g=\widetilde{k}_1\widetilde{D}(\beta,\gamma)\widetilde{k}_2$ for some $\beta \geq \gamma \geq 0$ and $\widetilde{k}_1,\widetilde{k}_2 \in \widetilde{K}$. For $i=1,2$, let $t_i \in \mathbb{R}$ and $\widetilde{h}_i \in \widetilde{H}$ be so that $\widetilde{k}_i=\widetilde{v}_{t_i}\widetilde{h}_i=\widetilde{h}_i\widetilde{v}_{t_i}$. Using both invariance properties of functions in $\mathcal{C}$, we obtain
\begin{equation} \nonumber
  \varphi(g) = \varphi(\widetilde{h}_1\widetilde{v}_{t_1}\widetilde{D}(\beta,\gamma)\widetilde{v}_{t_2}\widetilde{h}_2) =\varphi(\widetilde{v}_{t_1+t_2}\widetilde{D}(\beta,\gamma)).
\end{equation}
\end{proof}
\begin{ntn} \label{ntn:notation}
The value of $\varphi \in \mathcal{C}$ at $g=(g_0,t) \in \widetilde{G}$ does not change if we multiply $g$ from the left or the right with an element of $\widetilde{H}$ or if we conjugate $g$ with an element of $\widetilde{K}$. This induces an equivalence relation on $\widetilde{G}$. Let $S_{\beta,\gamma,t}$ denote the corresponding equivalence class of the element $\widetilde{v}_{\frac{t}{2}}\widetilde{D}(\beta,\gamma)$ (note that the $t$-parameter of the equivalence class corresponds to the $t$-parameter coming from the equation $c(g_0)=e^{it}$). Also, for $\varphi \in \mathcal{C}$, we put $\dot{\varphi}(\beta,\gamma,t)=\varphi(\widetilde{v}_{\frac{t}{2}}\widetilde{D}(\beta,\gamma))$.
\end{ntn}
\begin{lem} \label{lem:cinvariance}
  The class $\mathcal{C}$ is invariant under the action of the one-parameter family $\widetilde{v}_t$. More precisely, if $\varphi \in \mathcal{C}$ and $t \in \mathbb{R}$, then $\varphi_t:\widetilde{G} \rightarrow \bbC$ defined by $\varphi_t(g)=\varphi(\widetilde{v}_tg)$ is also in $\mathcal{C}$. Clearly, for an element $\varphi \in M_0A(\widetilde{G}) \cap \mathcal{C}$, it follows that for all $t \in \mathbb{R}$, we have $\|\varphi_t\|_{M_0A(\widetilde{G})}=\|\varphi\|_{M_0A(\widetilde{G})}$.
\end{lem}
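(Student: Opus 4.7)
The plan is to reduce both assertions to the single fact that $\widetilde{v}_t$ is central in $\widetilde{K}$, and then invoke the Boże\-jko--Fendler characterisation of $M_0A$ to handle the norm identity.

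First I would check that $\widetilde{v}_t$ lies in the centre of $\widetilde{K}$. The element $Z \in \mathfrak{k}$ generating the one-parameter subgroup $\widetilde{v}_t = \widetilde{\exp}(tZ)$ spans the $\bbR$-summand in the decomposition $\mathfrak{k} = \mathfrak{su}(2) \oplus \bbR$ (see \cite[Lemma 9]{rawnsley}), so $[Z,X]=0$ for all $X \in \mathfrak{k}$. Because $\widetilde{\exp}:\mathfrak{k}\to\widetilde{K}$ is surjective by Proposition \ref{prp:kakdecomposition}, for any $\widetilde{k} = \widetilde{\exp}(X) \in \widetilde{K}$ we get $\widetilde{v}_t \widetilde{k} = \widetilde{\exp}(tZ)\widetilde{\exp}(X) = \widetilde{\exp}(X)\widetilde{\exp}(tZ) = \widetilde{k}\widetilde{v}_t$. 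In particular, $\widetilde{v}_t$ commutes with every element of $\widetilde{H} \subset \widetilde{K}$.

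Using this centrality, both invariances of $\varphi_t$ follow immediately from the corresponding invariances of $\varphi$. For $\widetilde{H}$-bi-invariance, take $\widetilde{h}_1,\widetilde{h}_2 \in \widetilde{H}$ and compute
\[
  \varphi_t(\widetilde{h}_1 g \widetilde{h}_2)
  = \varphi(\widetilde{v}_t \widetilde{h}_1 g \widetilde{h}_2)
  = \varphi(\widetilde{h}_1 \widetilde{v}_t g \widetilde{h}_2)
  = \varphi(\widetilde{v}_t g) = \varphi_t(g),
\]
where the middle equality uses the centrality and the third uses $\widetilde{H}$-bi-invariance of $\varphi$. For $\mathrm{Int}(\widetilde{K})$-invariance, take $\widetilde{k} \in \widetilde{K}$ and write similarly
\[
  \varphi_t(\widetilde{k} g \widetilde{k}^{-1})
  = \varphi(\widetilde{v}_t \widetilde{k} g \widetilde{k}^{-1})
  = \varphi(\widetilde{k} \widetilde{v}_t g \widetilde{k}^{-1})
  = \varphi(\widetilde{v}_t g) = \varphi_t(g).
\]
Continuity of $\varphi_t$ is clear, so $\varphi_t \in \mathcal{C}$.

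For the norm statement, I would invoke the Boże\-jko--Fendler theorem recalled in Section \ref{subsec:cbfmcgp}: given any representation $\varphi(g_2^{-1}g_1) = \langle P(g_1), Q(g_2)\rangle$ with bounded continuous $P,Q:\widetilde{G} \to \mathcal{H}$, we have
\[
  \varphi_t(g_2^{-1}g_1)
  = \varphi(\widetilde{v}_t g_2^{-1} g_1)
  = \varphi((g_2 \widetilde{v}_t^{-1})^{-1} g_1)
  = \langle P(g_1), Q(g_2 \widetilde{v}_t^{-1})\rangle.
\]
The pair $(P, Q \circ R_{\widetilde{v}_t^{-1}})$ represents $\varphi_t$ with the same supremum norms, so $\|\varphi_t\|_{M_0A(\widetilde{G})} \le \|\varphi\|_{M_0A(\widetilde{G})}$. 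Applying the same argument to $\varphi = (\varphi_t)_{-t}$ yields the reverse inequality, giving equality. No real obstacle is anticipated here; the content of the lemma is essentially that left translation by a central element of $\widetilde{K}$ preserves $\mathcal{C}$ and preserves the $M_0A$-norm, and both facts are immediate from the structural observation that $Z$ is central in $\mathfrak{k}$.
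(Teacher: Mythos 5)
Your proof is correct and follows essentially the same route as the paper: both arguments rest on the centrality of $\widetilde{v}_t$ in $\widetilde{K}$ (the paper verifies the invariances via the equivalent characterisation of $\mathcal{C}$ in Remark \ref{rmk:alternativecharacterizationc}, i.e.\ conjugation-invariance under the family $\widetilde{v}_s$ only, whereas you check full $\mathrm{Int}(\widetilde{K})$-invariance directly, which amounts to the same computation). Your Bo\.zejko--Fendler argument correctly supplies the norm equality that the paper dismisses as clear.
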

\begin{proof}
  Let $\varphi \in \mathcal{C}$. We have $\varphi_t(\widetilde{h}_1g\widetilde{h}_2) = \varphi(\widetilde{v}_t\widetilde{h}_1g\widetilde{h}_2) = \varphi(\widetilde{h}_1\widetilde{v}_tg\widetilde{h}_2) = \varphi(\widetilde{v}_tg) = \varphi_t(g)$ for all $g \in \widetilde{G}$, $t \in \mathbb{R}$ and $\widetilde{h}_1,\widetilde{h}_2 \in \mathcal{H}$. Moreover, we have $\varphi_t(\widetilde{v}_sg\widetilde{v}_s^{-1}) = \varphi(\widetilde{v}_t\widetilde{v}_sg\widetilde{v}_s^{-1}) = \varphi(\widetilde{v}_s\widetilde{v}_tg\widetilde{v}_s^{-1}) = \varphi(\widetilde{v}_tg) = \varphi_t(g)$ for all $g \in \widetilde{G}$ and $s,t \in \mathbb{R}$. This proves the invariance properties of $\mathcal{C}$ of Remark \ref{rmk:alternativecharacterizationc} for $\varphi_t$.
\end{proof}
\begin{lem} \label{lem:restrictiontoc}
  If $\widetilde{G}$ has the AP, then the approximating net can be chosen in the set $A(\widetilde{G}) \cap \mathcal{C}$.
\end{lem}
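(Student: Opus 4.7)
The plan is a standard compact averaging argument. Assume $\widetilde{G}$ has the AP and fix an approximating net $(\varphi_\alpha)\subset A(\widetilde{G})$ that converges to $1$ in the $\sigma(M_0A(\widetilde{G}),M_0A(\widetilde{G})_*)$-topology with $\sup_\alpha \|\varphi_\alpha\|_{M_0A(\widetilde{G})}<\infty$. The goal is to average each $\varphi_\alpha$ over the two symmetrisation operations defining $\mathcal{C}$ (see Remark \ref{rmk:alternativecharacterizationc}), producing a new net in $A(\widetilde{G})\cap\mathcal{C}$ with the same weak-$*$ limit.

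The crucial observation is that both symmetrisations are effectively compact. The group $\widetilde{H}\cong \SU(2)$ is compact, so averaging over left and right translations by $\widetilde{H}$ is direct. For conjugation by $\widetilde{v}_t$, $t\in\bbR$, note that $\sigma(\widetilde{v}_{2\pi})=v_{2\pi}=1$ in $G$, so $\widetilde{v}_{2\pi}\in\ker\sigma$. Since $\ker\sigma$ is discrete and $\widetilde{G}$ is connected, $\widetilde{v}_{2\pi}$ lies in the centre of $\widetilde{G}$, and therefore the conjugation action $g\mapsto \widetilde{v}_tg\widetilde{v}_t^{-1}$ is $2\pi$-periodic in $t$; averaging over $[0,2\pi]$ yields genuine invariance under all $\widetilde{v}_t$. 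Moreover, since $\mathfrak{k}=\mathfrak{su}(2)\oplus\bbR$ with $Z$ generating the $\bbR$-factor, $\widetilde{v}_t$ lies in the centre of $\widetilde{K}$ and in particular commutes with $\widetilde{H}$, so the two operations commute and may be combined unambiguously.

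Accordingly, define
\begin{equation*}
  \tilde{\varphi}_\alpha(g) = \frac{1}{2\pi}\int_0^{2\pi}\int_{\widetilde{H}}\int_{\widetilde{H}} \varphi_\alpha\bigl(\widetilde{v}_t\widetilde{h}_1 g\widetilde{h}_2\widetilde{v}_t^{-1}\bigr)\,d\widetilde{h}_1\,d\widetilde{h}_2\,dt.
\end{equation*}
Three verifications then conclude the proof. First, $\tilde{\varphi}_\alpha\in A(\widetilde{G})$: left/right translations and conjugation are continuous isometries of the Banach space $A(\widetilde{G})$, so the integrand is a continuous $A(\widetilde{G})$-valued function on a compact set, and its Bochner integral lies in $A(\widetilde{G})$. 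Second, $\tilde{\varphi}_\alpha\in\mathcal{C}$: $\widetilde{H}$-bi-invariance is built into the averaging, while invariance under conjugation by $\widetilde{v}_s$ follows from the substitution $t\mapsto t-s\pmod{2\pi}$ together with $2\pi$-periodicity of the integrand, after which Remark \ref{rmk:alternativecharacterizationc} applies. Third, $\tilde{\varphi}_\alpha\to 1$ in the $\sigma(M_0A(\widetilde{G}),M_0A(\widetilde{G})_*)$-topology: since translations and conjugations are isometries of $M_0A(\widetilde{G})$, the operator $\varphi\mapsto\tilde{\varphi}$ is a contraction on $M_0A(\widetilde{G})$ fixing the constant function $1$.

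The main (mild) obstacle is the last verification, which requires that the dual actions of translation and conjugation preserve $M_0A(\widetilde{G})_*$. This is a standard property of the predual going back to De~Canni\`ere--Haagerup, and it ensures that the averaging operator is weak-$*$ continuous, so that it may be applied directly to the weak-$*$ limit.
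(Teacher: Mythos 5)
Your proof is correct and is essentially the paper's argument: the paper averages $f(\widetilde{h}_1\widetilde{v}_t g\widetilde{v}_t^{-1}\widetilde{h}_2)$ over $\widetilde{H}\times\widetilde{H}\times(\mathbb{R}/\pi\mathbb{Z})$ (the conjugation action is in fact already $\pi$-periodic, since $\widetilde{v}_\pi$ covers the central element $-I_4$ of $G$ and is hence central in $\widetilde{G}$, but averaging over a $2\pi$-period as you do yields the same operator because $\widetilde{v}_t$ commutes with $\widetilde{H}$), and then concludes via the duality $\langle\varphi^{\mathcal{C}},f\rangle=\langle\varphi,f^{\mathcal{C}}\rangle$ for $f\in L^1(\widetilde{G})$ dense in $M_0A(\widetilde{G})_*$, which is exactly your weak-$*$ continuity step. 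The only blemish is your opening assumption $\sup_\alpha\|\varphi_\alpha\|_{M_0A(\widetilde{G})}<\infty$, which is not part of the definition of the AP and cannot be assumed in general --- but you never actually use it, so the argument stands.
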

\begin{proof}
For $f \in C(\widetilde{G})$ or $f \in L^1(\widetilde{G})$, we define
\[
  f^{\mathcal{C}}(g)=\frac{1}{\pi}\int_{\mathbb{R} / \pi\mathbb{Z}} \int_{\widetilde{H}} \int_{\widetilde{H}} f(\widetilde{h}_1\widetilde{v}_tg\widetilde{v}_t^{-1}\widetilde{h}_2) d\widetilde{h}_1d\widetilde{h}_2dt, \qquad g \in \widetilde{G},
\]
where $d\widetilde{h}_1$ and $d\widetilde{h}_2$ both denote the normalized Haar measure on $\widetilde{H}$. The function $f^{\mathcal{C}}$ clearly satisfies the invariance properties of Remark \ref{rmk:alternativecharacterizationc}.

The rest of the proof is similar to the proof of \cite[Lemma 2.5]{haagerupdelaat1}.
\end{proof}
\begin{prp} \label{prp:sp2ab}
  There exist constants $C_1,C_2 > 0$ such that for all functions $\varphi$ in $M_0A(\widetilde{G}) \cap \mathcal{C}$ and $t \in \mathbb{R}$, the limit $c_{\varphi}(t)=\lim_{s \to \infty} \dot{\varphi}(2s,s,t)$ exists, and for all $\beta \geq \gamma \geq 0$, we have
\[
  |\dot{\varphi}(\beta,\gamma,t)-c_{\varphi}(t)| \leq C_1 e^{-C_2\sqrt{\beta^2+\gamma^2}}\|\varphi\|_{M_0A(\widetilde{G})}.
\]
\end{prp}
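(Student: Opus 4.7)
My plan is first to reduce the statement to its $t=0$ specialization using Lemma \ref{lem:cinvariance}, and then to treat the $t=0$ case by the Bo\.zejko--Fendler characterisation combined with a compact Gelfand pair argument, in the spirit of the first article.

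\textbf{Reduction to $t=0$.} By Lemma \ref{lem:cinvariance}, the function $\varphi_{t/2}(g) = \varphi(\widetilde{v}_{t/2}g)$ lies in $M_0A(\widetilde{G}) \cap \mathcal{C}$ and satisfies $\|\varphi_{t/2}\|_{M_0A(\widetilde{G})} = \|\varphi\|_{M_0A(\widetilde{G})}$. A direct computation (using $\widetilde{v}_{s/2}\widetilde{v}_{t/2} = \widetilde{v}_{(s+t)/2}$ together with the $\mathrm{Int}(\widetilde{K})$-invariance of $\mathcal{C}$) gives
\[
  \dot{\varphi_{t/2}}(\beta,\gamma,s) = \varphi\bigl(\widetilde{v}_{t/2}\widetilde{v}_{s/2}\widetilde{D}(\beta,\gamma)\bigr) = \dot{\varphi}(\beta,\gamma,s+t).
\]
In particular $\dot{\varphi_{t/2}}(\beta,\gamma,0) = \dot{\varphi}(\beta,\gamma,t)$ and $c_{\varphi_{t/2}}(0) = c_\varphi(t)$. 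Hence it suffices to prove that for some absolute constants $C_1,C_2 > 0$,
\[
  |\varphi(\widetilde{D}(\beta,\gamma)) - c_\varphi(0)| \leq C_1 e^{-C_2\sqrt{\beta^2+\gamma^2}}\|\varphi\|_{M_0A(\widetilde{G})}
\]
for every $\varphi \in M_0A(\widetilde{G}) \cap \mathcal{C}$ and every $\beta \geq \gamma \geq 0$.

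\textbf{The $t=0$ estimate.} I would apply the Bo\.zejko--Fendler characterisation, writing $\varphi(g_2^{-1}g_1) = \langle P(g_1), Q(g_2)\rangle$ with $\|P\|_\infty\|Q\|_\infty = \|\varphi\|_{M_0A(\widetilde{G})}$, and average $P$ and $Q$ over $\widetilde{H}$ using the $\widetilde{H}$-bi-invariance of $\varphi$. I would then identify a compact subgroup $L$ of $\widetilde{G}$ containing $\widetilde{H}$ for which $(L,\widetilde{H})$ is a compact Gelfand pair and whose $KAK$-style decomposition meets each orbit $\widetilde{H}\widetilde{D}(\beta,\gamma)\widetilde{H}$ in a two-parameter family. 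Proposition \ref{prp:cbfmcgp}, applied to (an averaged form of) $\varphi|_L$, then produces a spherical function expansion $\varphi|_L = \sum_\pi c_\pi h_\pi$ with $\sum_\pi|c_\pi| \leq \|\varphi\|_{M_0A(\widetilde{G})}$. Identifying the coefficient of the trivial spherical function with $c_\varphi(0)$ and establishing a uniform estimate $|h_\pi(\widetilde{D}(\beta,\gamma))| \leq C_1 e^{-C_2\sqrt{\beta^2+\gamma^2}}$ for every non-trivial $\pi$ would then give the claim by summing the absolutely convergent series.

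\textbf{Main obstacle.} The hardest step will be the simultaneous exponential decay of the non-trivial spherical functions in both $\beta$ and $\gamma$, uniformly in $\pi$. Getting the rate $\sqrt{\beta^2+\gamma^2}$ rather than decay in one coordinate separately forces the compact subgroup $L$ to possess a genuinely rank-two spherical structure matching the two-parameter family $\widetilde{D}(\beta,\gamma)$, and its spherical functions have to be analyzed with Jacobi-polynomial type bounds that are uniform in the dominant weight of $\pi$. Once that analytic input is in place, the reduction to $t=0$ absorbs the only really new feature introduced by passing to the universal cover — the infinite cyclic centre coming from the non-compact $\widetilde{v}_t$-direction — and the remainder of the argument is essentially the compact-Gelfand-pair analysis of the first article transplanted to $\widetilde{G}$.
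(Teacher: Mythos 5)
Your reduction to $t=0$ via Lemma \ref{lem:cinvariance} is algebraically correct, but the core of your $t=0$ argument does not work. You propose to find a compact subgroup $L$ of $\widetilde{G}$ containing $\widetilde{H}$ whose spherical functions satisfy $|h_{\pi}(\widetilde{D}(\beta,\gamma))| \leq C_1 e^{-C_2\sqrt{\beta^2+\gamma^2}}$ uniformly in $\pi$. No such $L$ exists, and the estimate is not even well posed: the elements $\widetilde{D}(\beta,\gamma)$ generate a non-compact abelian subgroup, so for $\beta^2+\gamma^2$ large they lie outside every compact subgroup, and spherical functions of a \emph{compact} Gelfand pair are defined only on that compact group and do not decay (they satisfy $h_{\pi}(e)=1$ and are merely bounded by $1$). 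Moreover, since $\widetilde{K}\cong \SU(2)\times\mathbb{R}$ and $\mathbb{R}$ has no nontrivial compact subgroups, the only compact subgroup of $\widetilde{G}$ containing $\widetilde{H}$ is $\widetilde{H}$ itself, which is rank one; the ``genuinely rank-two spherical structure'' you ask for is not available. Proposition \ref{prp:cbfmcgp} can only ever control $\varphi$ on a compact set and therefore cannot by itself produce asymptotics as $\beta^2+\gamma^2\to\infty$.

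The mechanism in the paper is different: one restricts $\varphi$ to the \emph{translated} compact sets $\widetilde{D}(\alpha,0)\widetilde{H}\widetilde{D}(\alpha,0)$ and $\widetilde{D}(\alpha,\alpha)\widetilde{v}^{\pm1}\widetilde{H}\widetilde{D}(\alpha,\alpha)$, obtaining multipliers on $H\cong\SU(2)$ of norm at most $\|\varphi\|_{M_0A(\widetilde{G})}$ that are invariant for the strong Gelfand pair $(\SU(2),\U(1))$, respectively the Gelfand pair $(\SU(2),\SO(2))$ (Propositions \ref{prp:psihyp} and \ref{prp:chialpha}). The exponential decay then comes not from decay of spherical functions but from the \emph{uniform H\"older continuity} of the disc and Legendre polynomials (Lemmas \ref{lem:hoelderhpq} and \ref{lem:legendreestimates}) combined with the exponentially contracting coordinate change between the $\SU(2)$-parameters and $(\beta,\gamma,t)$ (Lemmas \ref{lem:hyperbolaseqs} and \ref{lem:circleseqs}); a telescoping Cauchy argument along the ray $(2s,s)$ (Lemmas \ref{lem:betagammacir} through \ref{lem:limit}) then yields the limit and the rate. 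Finally, your claim that the reduction to $t=0$ ``absorbs the only really new feature'' of the universal cover is not right: even to prove the estimate at $t=0$, the comparison families unavoidably shift the $t$-coordinate (see the third equations in Lemmas \ref{lem:hyperbolaseqs} and \ref{lem:circleseqs}), so one must separately control the variation of $\dot{\varphi}$ in $t$, which is exactly the content of Lemma \ref{lem:tdependences} and Proposition \ref{prp:tdependence} and is where the genuinely new work for $\widetilde{\Sp}(2,\mathbb{R})$ lies.
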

The proof of this proposition will be postponed. Using the following lemma, we will explain how the proposition implies Theorem \ref{thm:covsp2noap}.
\begin{lem} \label{lem:cmultipliersclosed}
 The space consisting of $\varphi$ in $M_0A(\widetilde{G}) \cap \mathcal{C}$ for which $c_{\varphi}(t) \equiv 0$ is $\sigma(M_0A(\widetilde{G}),M_0A(\widetilde{G})_{*})$-closed.
\end{lem}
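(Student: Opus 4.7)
The plan is to apply the Krein--Smulian theorem: since the set in question is the zero locus of the linear functionals $\varphi \mapsto c_{\varphi}(t)$ indexed by $t \in \mathbb{R}$, it is a linear subspace of $M_0A(\widetilde{G}) \cap \mathcal{C}$, so to prove it is $\sigma(M_0A(\widetilde{G}), M_0A(\widetilde{G})_*)$-closed it suffices to verify that its intersection with each closed norm ball of radius $M > 0$ is weak-$*$ closed. The underlying dual space is $M_0A(\widetilde{G})$, and both $M_0A(\widetilde{G}) \cap \mathcal{C}$ and the zero condition on $c_{\varphi}$ must be checked at the weak-$*$ limit.

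The key ingredient is the standard fact (see \cite{decannierehaagerup}, \cite{haagerupkraus}, \cite{haagerupdelaat1}) that on norm-bounded subsets of $M_0A(\widetilde{G})$ the weak-$*$ topology coincides with the topology of pointwise convergence on $\widetilde{G}$. This has two immediate consequences: (i) the defining invariance conditions of $\mathcal{C}$ from Remark \ref{rmk:alternativecharacterizationc}, being equations between point evaluations, pass to weak-$*$ limits on bounded sets, so $M_0A(\widetilde{G}) \cap \mathcal{C}$ is itself weak-$*$ closed on balls; (ii) for fixed $\beta, \gamma, t$, the map $\varphi \mapsto \dot{\varphi}(\beta, \gamma, t) = \varphi(\widetilde{v}_{t/2}\widetilde{D}(\beta, \gamma))$ is weak-$*$ continuous on balls.

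Now fix $M > 0$ and let $(\varphi_{\alpha})$ be a net in the set with $\|\varphi_{\alpha}\|_{M_0A(\widetilde{G})} \leq M$ converging weak-$*$ to some $\varphi \in M_0A(\widetilde{G})$. By weak-$*$ lower semicontinuity of the norm, $\|\varphi\|_{M_0A(\widetilde{G})} \leq M$, and by (i), $\varphi \in M_0A(\widetilde{G}) \cap \mathcal{C}$. Fix $t \in \mathbb{R}$ and $\varepsilon > 0$. By Proposition \ref{prp:sp2ab}, I can choose $s \geq 0$ so large that $C_1 e^{-C_2 s\sqrt{5}} M < \varepsilon/2$. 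Applying the proposition with $\beta = 2s$, $\gamma = s$ to each $\varphi_{\alpha}$, for which $c_{\varphi_{\alpha}}(t) = 0$, yields $|\dot{\varphi}_{\alpha}(2s, s, t)| < \varepsilon/2$ for every $\alpha$; by (ii), passing to the weak-$*$ limit gives $|\dot{\varphi}(2s, s, t)| \leq \varepsilon/2$. A second application of Proposition \ref{prp:sp2ab}, this time to $\varphi$ itself, bounds $|\dot{\varphi}(2s, s, t) - c_{\varphi}(t)|$ by $\varepsilon/2$, whence $|c_{\varphi}(t)| \leq \varepsilon$. As $t$ and $\varepsilon$ are arbitrary, $c_{\varphi} \equiv 0$.

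The substantive point is that the uniform-in-$\varphi$ exponential decay furnished by Proposition \ref{prp:sp2ab} converts the defining limit of $c_{\varphi}$, which is a priori not weak-$*$ continuous because it involves a limit in the parameter $s$, into something arbitrarily well approximated by a single weak-$*$ continuous point evaluation --- with error controlled by $M$ alone. This is precisely why the reduction to norm balls via Krein--Smulian is the right framework and constitutes the only nontrivial obstacle in the argument.
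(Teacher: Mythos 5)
Your overall architecture --- Krein--Smulian to reduce to norm balls, then the uniform exponential decay of Proposition \ref{prp:sp2ab} to replace the limit defining $c_{\varphi}$ by a single evaluation with controlled error --- is the same as the paper's. But the argument breaks at what you call the key ingredient: it is \emph{not} true that on norm-bounded subsets of $M_0A(\widetilde{G})$ the $\sigma(M_0A(\widetilde{G}),M_0A(\widetilde{G})_{*})$-topology coincides with pointwise convergence. On bounded sets this topology coincides with $\sigma(L^{\infty},L^1)$ (because $L^1(\widetilde{G})$ is dense in the predual), and $\sigma(L^{\infty},L^1)$-convergence of a bounded net of continuous functions does not yield pointwise convergence: already on $\mathbb{R}$ the characters $x \mapsto e^{inx}$ satisfy $\|e^{in\cdot}\|_{M_0A(\mathbb{R})}=1$, tend to $0$ against every $L^1$ function by Riemann--Lebesgue, yet equal $1$ at the origin for every $n$; spherical functions of a compact Gelfand pair exhibit the same phenomenon inside a bi-invariant class. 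Hence both of your consequences --- (i) that membership in $\mathcal{C}$ passes to bounded weak-$*$ limits because the invariance conditions are ``equations between point evaluations'', and (ii) that $\varphi \mapsto \dot{\varphi}(\beta,\gamma,t)$ is weak-$*$ continuous on balls --- are unjustified, and (ii) is precisely the kind of statement whose failure is the reason the whole a.e.-domination machinery of \cite{haagerupdelaat1} exists.

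Both defects are repairable, and the paper's proof shows how. The closedness of $M_0A(\widetilde{G}) \cap \mathcal{C}$ is obtained not from point evaluations but from the duality identity $\langle \varphi^{\mathcal{C}},f \rangle = \langle \varphi, f^{\mathcal{C}} \rangle$ for $f \in L^1(\widetilde{G})$, where $f \mapsto f^{\mathcal{C}}$ is the averaging of Lemma \ref{lem:restrictiontoc}. For the decay condition one never evaluates at a point: by \cite[Lemma 2.6]{haagerupdelaat1}, for any strictly positive measurable $v$ the set $\{ f \in L^{\infty} \mid |f| \leq v \text{ a.e.} \}$ is $\sigma(L^{\infty},L^1)$-closed (it is an intersection of the weak-$*$ closed conditions $|\int fg| \leq \int v|g|$ over $g \in L^1$). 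Taking $v$ to be the continuous majorant $C_1 e^{-C_2\sqrt{\beta^2+\gamma^2}}$ furnished by Proposition \ref{prp:sp2ab}, the unit ball of the subspace in question is exactly the intersection of the unit ball of $M_0A(\widetilde{G}) \cap \mathcal{C}$ with this set; a weak-$*$ limit $\varphi$ therefore satisfies $|\varphi| \leq v$ a.e., hence everywhere by continuity of $\varphi$ and $v$, and then $\dot{\varphi}(2s,s,t) \to 0$ forces $c_{\varphi} \equiv 0$. In short: replace ``point evaluations are weak-$*$ continuous on balls'' by ``a.e.\ domination by a continuous function is weak-$*$ closed, and continuity of the limit function upgrades a.e.\ to everywhere''.
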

\begin{proof}
Let $(\varphi_{\alpha})$ be a net in $M_0A(\widetilde{G}) \cap \mathcal{C}$ converging to $\varphi \in M_0A(\widetilde{G})$. It follows that for all $f \in L^1(\widetilde{G})$, we have $\langle \varphi,f \rangle=\lim_{\alpha} \langle \varphi_{\alpha},f \rangle=\lim_{\alpha} \langle \varphi_{\alpha}^{\mathcal{C}},f \rangle=\lim_{\alpha} \langle \varphi_{\alpha},f^{\mathcal{C}} \rangle=\langle \varphi,f^{\mathcal{C}} \rangle= \langle \varphi^{\mathcal{C}},f \rangle$, i.e., the space $M_0A(\widetilde{G}) \cap \mathcal{C}$ is $\sigma(M_0A(\widetilde{G}),M_0A(\widetilde{G})_{*})$-closed, since $L^1(\widetilde{G})$ is dense in $M_0A(\widetilde{G})_{*}$.

It was proved in \cite[Lemma 2.6]{haagerupdelaat1} that whenever $(X,\mu)$ is a $\sigma$-finite measure space and $v:X \rightarrow \bbR$ is a strictly positive measurable function on $X$, then the set $S:=\{ f \in L^{\infty}(X) \mid |f(x)| \leq v(x) \textrm{ a.e.}\}$ is $\sigma(L^{\infty}(X),L^1(X))$-closed. We can apply this fact to the unit ball of the space $\{\varphi \in M_0A(\widetilde{G}) \cap \mathcal{C} \mid c_{\varphi}(t) \equiv 0\}$. Indeed, the conditions are satisfied with $v$ given by Proposition \ref{prp:sp2ab} (putting $\|\varphi\|_{M_0A(\widetilde{G})} \leq 1$).

Recall the Krein-Smulian Theorem, asserting that whenever $X$ is a Banach space and $A$ is a convex subset of the dual space $X^{*}$ such that $A \cap \{x^{*} \in X^{*} \mid \|x^{*}\| \leq r\}$ is weak-* closed for every $r>0$, then $A$ is weak-* closed \cite[Theorem V.12.1]{conway}. In the case where $A$ is a vector space, which is the case here, it suffices to check the case $r=1$, i.e., the weak-* closedness of the unit ball. It follows that the space consisting of $\varphi$ in $M_0A(\widetilde{G}) \cap \mathcal{C}$ for which $c_{\varphi}(t) \equiv 0$ is $\sigma(M_0A(\widetilde{G}),M_0A(\widetilde{G})_{*})$-closed.
\end{proof}
\begin{proof}[Proof of Theorem \ref{thm:covsp2noap} using Proposition \ref{prp:sp2ab}.]
By Lemma \ref{lem:restrictiontoc}, it follows that if there is no net in $A(\widetilde{G}) \cap \mathcal{C}$ that approximates the constant function $1$ in the $\sigma(M_0A(\widetilde{G}),M_0A(\widetilde{G})_{*})$-topology, then $\widetilde{G}$ does not have the AP. However, since the space $\{\varphi \in M_0A(\widetilde{G}) \cap \mathcal{C} \mid c_{\varphi}(t) \equiv 0\}$ is $\sigma(M_0A(\widetilde{G}),M_0A(\widetilde{G})_{*})$-closed by Lemma \ref{lem:cmultipliersclosed}, it follows immediately that the constant function $1$ cannot be approximated by such a net.
\end{proof}
The rest of this section will be devoted to proving Proposition \ref{prp:sp2ab}. Hereto, we identify certain pairs of groups in $\widetilde{G}$, as was also done for $G$ in \cite{haagerupdelaat1}. However, since $\widetilde{K}$ is not compact (unlike $K$ in $G$), one of the pairs we consider here is slightly different.

Firstly, note that $\U(1)$ is contained as a subgroup in $\SU(2)$ by the embedding
\begin{equation} \label{eq:embeddingu1su2}
  \left( \begin{array}{cc} e^{i\nu} & 0 \\ 0 & e^{-i\nu} \end{array} \right) \hookrightarrow \SU(2),
\end{equation}
where $\nu \in \bbR$. We point out that the quotient of $\SU(2)$ with respect to the equivalence relation $g \sim kgk^{-1}$ for $k \in \U(1)$ is homeomorphic to the closed unit disc $\overline{\mathbb{D}}$ in the complex plane. This homeomorphism is given by
\begin{equation} \label{eq:homeomorphismdisc}
  z = \begin{pmatrix} z_{11} & z_{12} \\ z_{21} & z_{22} \end{pmatrix} \mapsto z_{11}.
\end{equation}
Let $H_0$ denote the corresponding subgroup of $H$. It can be proved that $(H,H_0)$ is a strong Gelfand pair (see Section \ref{subsec:gelfandpairs}). However, because the theory on strong Gelfand pairs is not as well-developed as the theory of Gelfand pairs, we use a more explicit approach, and prove the things we need in a more ad hoc manner.

For $l,m \in \mathbb{Z}_{\geq 0}$, consider the so-called disc polynomials (see \cite{koornwinder}) $h_{l,m}^0:\overline{\mathbb{D}} \rightarrow \bbC$ from the closed unit disc $\overline{\mathbb{D}}$ to $\bbC$, given by
\[
  h_{l,m}^0(z)=\left\{ \begin{array}{ll} z^{l-m} P_m^{(0,l-m)}(2|z|^2-1) & \qquad l \geq m, \\ \overline{z}^{m-l} P_l^{(0,m-l)}(2|z|^2-1) & \qquad l < m. \end{array} \right.
\]
where $P_n^{(\alpha,\beta)}$ denotes the $n^{\textrm{th}}$ Jacobi polynomial.

Recall that a function $f:X \rightarrow Y$ from a metric space $X$ to a metric space $Y$ is H\"older continuous with exponent $\alpha>0$ if there exists a constant $C>0$ such that $d_Y(f(x_1),f(x_2))\leq Cd_X(x_1,x_2)^{\alpha}$, for all $x_1,x_2 \in X$. The following result (see \cite[Corollary 3.5]{haagerupdelaat1}) gives H\"older continuity with exponent $\frac{1}{4}$ of the functions $h_{l,m}^0$ on the circle in $\mathbb{D}$ centered at the origin with radius $\frac{1}{\sqrt{2}}$, with a constant independent of $l$ and $m$. It is a corollary of results of de first named author and Schlichtkrull \cite{hsjacobi}.
\begin{lem} \label{lem:hoelderhpq}
  For all $l,m \geq 0$, we have
\[
  \biggl\vert h_{l,m}^0\left(\frac{e^{i\theta_1}}{\sqrt{2}}\right)-h_{l,m}^0\left(\frac{e^{i\theta_2}}{\sqrt{2}}\right) \biggr\vert \leq \tilde{C}|\theta_1-\theta_2|^{\frac{1}{4}}
\]
for all $\theta_1,\theta_2 \in [0,2\pi)$, where $\tilde{C}$ is a constant independent of $l$ and $m$.
\end{lem}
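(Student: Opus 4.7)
\emph{Plan of proof.} The idea is to exploit the special structure of the disc polynomials on the circle of radius $1/\sqrt 2$, where $2|z|^2 - 1 = 0$, so that the Jacobi-polynomial factor becomes a constant and only a pure phase remains as the $\theta$-dependence. Setting $z_j = e^{i\theta_j}/\sqrt 2$, the definition gives, say for $l \geq m$,
\[
  h_{l,m}^0(z_j) \;=\; 2^{-(l-m)/2}\, e^{i(l-m)\theta_j}\, P_m^{(0,l-m)}(0),
\]
and an analogous formula with $e^{-i(m-l)\theta_j}$ when $l < m$. Thus, writing $n = \min(l,m)$ and $k = |l-m|$, the difference factors as
\[
  h_{l,m}^0(z_1) - h_{l,m}^0(z_2) \;=\; 2^{-k/2}\, P_n^{(0,k)}(0)\, \bigl(e^{\pm i k \theta_1} - e^{\pm i k \theta_2}\bigr),
\]
so the whole problem reduces to controlling the scalar $2^{-k/2}|P_n^{(0,k)}(0)|$ together with the elementary phase difference.

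Next, I would use the trivial interpolation inequality $|e^{ix} - 1| \leq 2^{3/4}|x|^{1/4}$ (which follows from $|e^{ix}-1|\leq 2$ combined with $|e^{ix}-1| \leq |x|$) to estimate
\[
  |e^{\pm i k \theta_1} - e^{\pm i k \theta_2}| \;\leq\; 2^{3/4}\, k^{1/4}\, |\theta_1 - \theta_2|^{1/4}.
\]
The remaining task is to bound $2^{-k/2}k^{1/4}|P_n^{(0,k)}(0)|$ uniformly in $n \geq 0$ and $k \geq 0$ (with the convention that $k^{1/4} = 1$ for $k=0$). This is exactly the kind of sharp pointwise bound for Jacobi polynomials proved in the Haagerup--Schlichtkrull paper \cite{hsjacobi}; specifically, one should extract from their estimates a bound of the form $|P_n^{(0,k)}(0)| \leq C'\, 2^{k/2}/k^{1/4}$ for $k \geq 1$ (and boundedness at $k=0$), valid uniformly in $n$.

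Putting the two estimates together produces $|h_{l,m}^0(z_1) - h_{l,m}^0(z_2)| \leq \tilde C |\theta_1 - \theta_2|^{1/4}$ with $\tilde C$ independent of $l,m$, which is exactly the claim. The case $l < m$ is handled identically, and the $k=0$ case (i.e.\ $l = m$) reduces to $h_{l,l}^0(z_j) = P_l^{(0,0)}(0)$, which is constant in $\theta$, so the inequality holds trivially.

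The main obstacle is the uniform Jacobi-polynomial bound $|P_n^{(0,k)}(0)| \lesssim 2^{k/2} k^{-1/4}$: while this is essentially the content of the relevant Haagerup--Schlichtkrull estimates, the bound needs to be applied at the specific point $x = 0$ (which lies in the ``oscillatory'' range) and must hold uniformly in both indices simultaneously. All the other steps are essentially formal: a direct substitution into the definition of $h_{l,m}^0$, the elementary interpolation of $|e^{ix}-1|$, and a phrasing that handles the three cases $l > m$, $l < m$, $l = m$ in one clean estimate.
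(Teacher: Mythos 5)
Your argument is correct and is essentially the paper's own route: the lemma is imported from \cite[Corollary 3.5]{haagerupdelaat1}, whose proof likewise restricts to the circle $|z|=\tfrac{1}{\sqrt{2}}$ (where the Jacobi factor freezes at $P_n^{(0,k)}(0)$, $k=|l-m|$, $n=\min(l,m)$) and interpolates a Lipschitz bound against a sup bound, which is exactly your step $|e^{ik\theta_1}-e^{ik\theta_2}|\leq\min(2,k|\theta_1-\theta_2|)\leq 2^{3/4}(k|\theta_1-\theta_2|)^{1/4}$. The uniform Jacobi estimate you single out as the main obstacle, namely $2^{-k/2}|P_n^{(0,k)}(0)|\leq C(l+m+1)^{-1/4}\leq C(k\vee 1)^{-1/4}$, is precisely the Haagerup--Schlichtkrull input recorded later in the paper as the second inequality of Lemma \ref{lem:hoelderu2p}, so no ingredient is missing.
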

We now prove the following decomposition result.
\begin{lem} \label{lem:hoeldersu2u1}
  Let $\varphi \in M_0A(\mathrm{SU}(2) \slash \slash \U(1))$ (recall the embedding \eqref{eq:embeddingu1su2}). Let
\[
  z = \left( \begin{array}{cc} z_{11} & z_{12} \\ z_{21} & z_{22} \end{array} \right) \in \SU(2).
\]
Then $\varphi(z)=\varphi^0(z_{11})$ for a certain function $\varphi^0:\mathbb{D} \rightarrow \bbC$, and
\[
  \varphi^0 = \sum_{l,m \geq 0} c_{l,m}h_{l,m}^0
\]
such that $\sum_{l,m \geq 0} |c_{l,m}|=\|\varphi\|_{M_0A(\SU(2))}$. Moreover, $\varphi^0$ satisfies
\[
  \biggl\vert \varphi^0\left(\frac{e^{i\theta_1}}{\sqrt{2}}\right)-\varphi^0\left(\frac{e^{i\theta_2}}{\sqrt{2}}\right) \biggr\vert \leq \tilde{C}|\theta_1-\theta_2|^{\frac{1}{4}}\|\varphi\|_{M_0A(\SU(2))}
\]
for all $\theta_1,\theta_2 \in [0,2\pi)$.
\end{lem}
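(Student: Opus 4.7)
The approach is to lift $\varphi$ to the Gelfand pair $(\SU(2) \times \U(1), \Delta\U(1))$, apply Proposition~\ref{prp:cbfmcgp}, and identify the resulting spherical functions with the disc polynomials.

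First I would establish the reduction $\varphi(z) = \varphi^0(z_{11})$. Conjugation by $\diag(e^{i\nu}, e^{-i\nu})$ preserves the $(1,1)$-entry of $z$, and conversely the $\U(1)$-conjugacy classes in $\SU(2)$ are parametrized by $z_{11} \in \overline{\mathbb{D}}$ via the homeomorphism~\eqref{eq:homeomorphismdisc}. Hence any $\mathrm{Int}(\U(1))$-invariant continuous function on $\SU(2)$ factors uniquely through $z \mapsto z_{11}$.

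Next I would lift $\varphi$ to $\tilde{\varphi}(g, k) := \varphi(gk^{-1})$ on $\SU(2) \times \U(1)$. A direct calculation using the $\mathrm{Int}(\U(1))$-invariance of $\varphi$ shows that $\tilde{\varphi}$ is $\Delta\U(1)$-bi-invariant. If $\varphi$ admits the Bo\.zejko--Fendler representation $\varphi(g_2^{-1}g_1) = \langle P(g_1), Q(g_2) \rangle$, then using $\mathrm{Int}(\U(1))$-invariance to rewrite $\varphi(g_2^{-1}g_1 k_1^{-1} k_2)$ as $\varphi((g_2 k_2^{-1})^{-1} (g_1 k_1^{-1}))$ yields $\tilde{\varphi}((g_2, k_2)^{-1}(g_1, k_1)) = \langle P(g_1 k_1^{-1}), Q(g_2 k_2^{-1}) \rangle$. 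Thus $\tilde{\varphi} \in M_0A(\SU(2) \times \U(1))$ with norm at most $\|\varphi\|_{M_0A(\SU(2))}$, and the reverse inequality follows by restriction to $\SU(2) \times \{1\}$.

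Since $(\SU(2), \U(1))$ is a strong Gelfand pair (as recalled in Section~\ref{subsec:gelfandpairs}), $(\SU(2) \times \U(1), \Delta\U(1))$ is a compact Gelfand pair, and Proposition~\ref{prp:cbfmcgp} applied to $\tilde{\varphi}$ yields an $\ell^1$-expansion $\tilde{\varphi} = \sum c_{\pi} h_{\pi}$ with $\sum|c_{\pi}| = \|\varphi\|_{M_0A(\SU(2))}$, summed over the irreducibles $\pi_n \otimes \chi_k$ with $|k| \leq n$ and $k \equiv n \pmod 2$. Parametrizing $n = l+m$ and $k = l-m$ for $l, m \geq 0$ and restricting to $k = 1$, the spherical functions reduce to weight-basis matrix coefficients $g \mapsto \langle \pi_n(g) e_{-k}, e_{-k} \rangle$, which equal $h_{l,m}^0(g_{11})$ by the classical explicit formulas for $\SU(2)$ matrix elements in terms of Jacobi polynomials (see~\cite{koornwinder}). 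This gives the expansion $\varphi^0 = \sum c_{l,m} h_{l,m}^0$ with $\sum|c_{l,m}| = \|\varphi\|_{M_0A(\SU(2))}$, after which the H\"older bound is immediate from Lemma~\ref{lem:hoelderhpq} and the triangle inequality:
\[
  \bigl| \varphi^0(e^{i\theta_1}/\sqrt{2}) - \varphi^0(e^{i\theta_2}/\sqrt{2}) \bigr| \leq \sum_{l, m \geq 0} |c_{l, m}|\, \bigl| h_{l,m}^0(e^{i\theta_1}/\sqrt{2}) - h_{l,m}^0(e^{i\theta_2}/\sqrt{2}) \bigr| \leq \tilde{C}\,|\theta_1 - \theta_2|^{1/4}\,\|\varphi\|_{M_0A(\SU(2))}.
\]
The main obstacle is the identification step: verifying that the spherical functions of the Gelfand pair $(\SU(2) \times \U(1), \Delta\U(1))$ coincide, up to the parametrization $(n,k) \leftrightarrow (l,m)$, with the disc polynomials $h_{l,m}^0$. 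This is classical but requires careful bookkeeping of weight bases and of the standard matrix-element formulas for $\SU(2)$.
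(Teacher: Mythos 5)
Your proposal is correct, but it takes a genuinely different route from the paper: you pass to the Gelfand pair $(\SU(2)\times\U(1),\Delta\U(1))$ via the lift $\tilde\varphi(g,k)=\varphi(gk^{-1})$, which is exactly the strong-Gelfand-pair correspondence described in Appendix \ref{sec:sgp} (the paper even remarks there that this viewpoint ``might give a deeper understanding'' of this very lemma, but deliberately avoids it in the proof). The paper instead transfers $\varphi$ to the Gelfand pair $(\U(2),L)$ with $L=\{\diag(1,e^{i\theta})\}$, using the semidirect product decomposition $\U(2)=\SU(2)\rtimes L$ to build an isometric isomorphism $M_0A(\SU(2)//\U(1))\cong M_0A(L\backslash\U(2)/L)$; note that $\U(2)$ and $\SU(2)\times\U(1)$ are not isomorphic, so these really are different auxiliary pairs. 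What the paper's route buys is that the spherical functions of $(\U(2),L)$ were already identified with the disc polynomials in \cite{haagerupdelaat1} and \cite{koornwinder}, so no new spherical-function computation is needed --- one simply restricts the known expansion to $\SU(2)$ and invokes \eqref{eq:homeomorphismdisc}. Your route instead requires the identification of the $\Delta\U(1)$-spherical functions of $\SU(2)\times\U(1)$ with weight-basis matrix coefficients $\langle\pi_n(g)e_{-k},e_{-k}\rangle$ and hence with $h^0_{l,m}$, which you correctly flag as the main bookkeeping burden; it also implicitly uses that $(\SU(2),\U(1))$ is a strong Gelfand pair, which the paper asserts but does not prove (Section \ref{subsec:gelfandpairs} only defines the notion). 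Your norm computation via Bo\.zejko--Fendler (the lift is norm-preserving, with the reverse inequality by restriction to the subgroup $\SU(2)\times\{1\}$) is sound and parallels the paper's $\Phi$/$\Psi$ argument; the phrase ``restricting to $k=1$'' should read ``restricting to the subgroup $\SU(2)\times\{1\}$''. Both arguments conclude identically from Lemma \ref{lem:hoelderhpq} and the triangle inequality.
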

\begin{proof}
Let $L \cong \U(1)$ denote the subgroup of $\U(2)$ given by the elements of the form
\[
  l_{\theta} = \begin{pmatrix} 1 & 0 \\ 0 & e^{i\theta} \end{pmatrix}, \qquad \theta \in \mathbb{R}.
\]
Note that $(\U(2),L)$ is the Gelfand pair that played an important role in \cite{haagerupdelaat1}. We now prove that there is an isometric isomorphism between $M_0A(\mathrm{SU}(2) // \U(1))$ and $M_0A(L \backslash \U(2) / L)$.

Let $\Phi:M_0A(L \backslash \U(2) / L) \rightarrow M_0A(\SU(2) // \U(1))$ be the map given by $\varphi \mapsto \tilde{\varphi}$, where $\tilde{\varphi}=\varphi\vert_{\SU(2)}$. It is clear that $\tilde{\varphi} \in M_0A(\SU(2) // \U(1))$ and that $\Phi$ is norm-decreasing.

Write $\U(2) = \SU(2) \rtimes L$ by the action given by multiplication from the right, i.e., $g=hl$, where $g \in \U(2)$, $h \in \SU(2)$ and $l \in L$. Consider the map $\Psi:M_0A(\SU(2) // \U(1)) \rightarrow M_0A(L \backslash \U(2) / L)$ given by $\varphi \mapsto \psi$, where $\psi(g)=\varphi(h)$ if $g=hl$ according to the unique factorization that follows from $\U(2) = \SU(2) \rtimes L$. It follows that $\psi(l_1hl_2)=\varphi(h)$ for all $h \in \SU(2)$ and $l_1,l_2 \in L$. Indeed, $\psi(l_1hl_2)=\psi(l_1hl_1^{-1}l_1l_2)=\varphi(l_1hl_1^{-1})=\varphi(h)$, since $lhl^{-1} \in \SU(2)$ for all $h \in \SU(2)$ and $l \in L$. From this, it follows that $\psi((h_2l_2)^{-1}h_1l_1)=\psi(l_2^{-1}h_2^{-1}h_1l_1)=\varphi(h_2^{-1}h_1)$. Let now $P,Q:\SU(2) \rightarrow \mathcal{H}$ be bounded continuous maps such that $\varphi(h_2^{-1}h_1)=\langle P(h_1),Q(h_2) \rangle$ for all $h_1,h_2 \in \SU(2)$ and $\|\varphi\|_{M_0A(\SU(2))}=\|P\|_{\infty}\|Q\|_{\infty}$. This is possible by the result of Bo\.zejko and Fendler mentioned in Section \ref{subsec:cbfmcgp}. It follows from this that also the map $\Psi$ is norm-decreasing, since the maps $\tilde{P}(hl)=P(h)$ and $\tilde{Q}(hl)=Q(h)$ give maps such that $\psi((h_2l_2)^{-1}h_1l_1)=\langle \tilde{P}(h_1l_1),\tilde{Q}(h_2l_2) \rangle$ for all $h_1,h_2 \in \SU(2)$ and $l_1,l_2 \in L$. Moreover, it is easy to check that $\Phi$ and $\Psi$ are each other's inverses.

From Proposition \ref{prp:cbfmcgp} we get a decomposition of elements of $M_0A(L \backslash \U(2) / L)$ in terms of the functions $h_{l,m}$, as was also explained in \cite[Section 3]{haagerupdelaat1}. Indeed $(\U(2),L)$ is a compact Gelfand pair. Applying the map $\Phi$ to this decomposition, i.e., restricting to $\SU(2)$, and by using the homeomorphism of \eqref{eq:homeomorphismdisc}, it follows that for $\varphi \in M_0A(\SU(2) // \U(1))$ we have $\varphi(h)=\varphi^0(h_{11})$ for a certain function $\varphi^0:\mathbb{D} \rightarrow \bbC$, and $\varphi^0 = \sum_{l,m \geq 0} c_{l,m}h_{l,m}^0$ such that $\sum_{l,m \geq 0} |c_{l,m}|=\|\varphi\|_{M_0A(\SU(2))}$. The last assertion of the lemma follows directly from Lemma \ref{lem:hoelderhpq}.
\end{proof}
\begin{rmk}
This lemma shows that the disc polynomials act like analogues of spherical functions for the strong Gelfand pair $(\SU(2),\U(1))$. The disc polynomials also occur as the spherical functions of the Gelfand pair $(\U(2),L)$, where $L$ is as above. It turns out that there is a general connection between the spherical functions for certain Gelfand pairs and their analogues for certain strong Gelfand pairs. A brief account on this connection is given in Appendix \ref{sec:sgp}.
\end{rmk}
Note that we can identify $M_0A(H // H_0)$ with $M_0A(\SU(2) // \U(1))$.
\begin{prp} \label{prp:psihyp}
  Let $\varphi \in M_0A(\widetilde{G}) \cap \mathcal{C}$. For $\alpha \geq 0$, let $\psi_{\alpha}:H \rightarrow \bbC$ be given by $h \mapsto \varphi(\widetilde{D}(\alpha,0)\widetilde{h}\widetilde{D}(\alpha,0))$. This function is an element of $M_0A(H // H_0)$, and $\|\psi_{\alpha}\|_{M_0A(H)} \leq \|\varphi\|_{M_0A(\widetilde{G})}$.
\end{prp}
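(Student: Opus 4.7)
My plan is to verify two properties of $\psi_\alpha$ in sequence: first, that it is $\mathrm{Int}(H_0)$-invariant, so that $\psi_\alpha \in C(H // H_0)$, and second, that it lies in $M_0A(H)$ with $\|\psi_\alpha\|_{M_0A(H)} \leq \|\varphi\|_{M_0A(\widetilde{G})}$. I identify $\widetilde{H}$ with $H$ throughout, which is legitimate since $\SU(2)$ is simply connected. The main obstacle will be step one, which requires a good factorization of elements of $H_0$ inside $\widetilde{K}$ relative to $\widetilde{D}(\alpha,0)$, since the groups $H_0$ and $\widetilde{A}$ do not commute directly.

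The key algebraic identity, which I will verify at the level of $K \cong \U(2)$ and then lift, is
\[
  h_0(\nu) = v_\nu\, l_{-2\nu},
\]
where $h_0(\nu) = \diag(e^{i\nu},e^{-i\nu}) \in H_0$, $\iota(v_\nu) = e^{i\nu} I_2$ is the central element of $K$, and $l_\theta = \diag(1,e^{i\theta})$ generates the subgroup $L$ of $K$ featuring in \cite{haagerupdelaat1}. Lifting both sides along continuous paths starting at the identity gives $\widetilde{h}_0(\nu) = \widetilde{v}_\nu\, \widetilde{l}_{-2\nu}$ in $\widetilde{K}$. Two further ingredients make this useful: $\widetilde{v}_\nu$ is central in $\widetilde{K}$ (since $Z$ generates the center of $\mathfrak{k}$), and $\widetilde{l}_\theta$ commutes with $\widetilde{D}(\alpha,0)$ in $\widetilde{G}$. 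The latter commutation will follow from the direct matrix computation that $l_\theta$ commutes with $D(\alpha,0)$ in $G$, combined with a continuity argument: the commutator $[\widetilde{l}_\theta,\widetilde{D}(\alpha,0)]$ is a continuous function of $(\theta,\alpha)$ with values in the discrete kernel of the covering map $\widetilde{G} \to G$, and it vanishes at $\theta=0$. Substituting the lifted factorization then yields
\[
  \widetilde{D}(\alpha,0)\,\widetilde{h}_0(\nu)\,\widetilde{h}\,\widetilde{h}_0(-\nu)\,\widetilde{D}(\alpha,0) = \widetilde{l}_{-2\nu}\,\widetilde{D}(\alpha,0)\,\widetilde{h}\,\widetilde{D}(\alpha,0)\,\widetilde{l}_{2\nu},
\]
with the two central factors $\widetilde{v}_{\pm\nu}$ moving through $\widetilde{K}$ and cancelling. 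I then rewrite $\widetilde{l}_{-2\nu} = \widetilde{v}_{-\nu}\widetilde{h}_0(\nu)$ and $\widetilde{l}_{2\nu} = \widetilde{v}_\nu\widetilde{h}_0(-\nu)$; the outer factors $\widetilde{h}_0(\pm\nu) \in \widetilde{H}_0 \subset \widetilde{H}$ are absorbed by $\widetilde{H}$-bi-invariance of $\varphi$, and the remaining conjugation by $\widetilde{v}_\nu$ is absorbed by $\mathrm{Int}(\widetilde{K})$-invariance. This gives $\psi_\alpha(h_0(\nu)\, h\, h_0(-\nu)) = \psi_\alpha(h)$, as required.

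For the norm bound, I will invoke the Bo\.zejko--Fendler characterization recalled in Section \ref{subsec:cbfmcgp}: choose bounded continuous $P,Q : \widetilde{G} \to \mathcal{H}$ with $\varphi(g_2^{-1}g_1) = \langle P(g_1), Q(g_2)\rangle$ for all $g_1,g_2 \in \widetilde{G}$, realizing $\|\varphi\|_{M_0A(\widetilde{G})} = \|P\|_\infty \|Q\|_\infty$. Setting $P'(h) = P(\widetilde{h}\,\widetilde{D}(\alpha,0))$ and $Q'(h) = Q(\widetilde{h}\,\widetilde{D}(\alpha,0)^{-1})$ for $h \in H$, one computes
\[
  \psi_\alpha(h_2^{-1}h_1) = \varphi\bigl(\widetilde{D}(\alpha,0)\,\widetilde{h}_2^{-1}\widetilde{h}_1\,\widetilde{D}(\alpha,0)\bigr) = \langle P'(h_1), Q'(h_2)\rangle,
\]
and applying Bo\.zejko--Fendler in the reverse direction gives $\|\psi_\alpha\|_{M_0A(H)} \leq \|P'\|_\infty \|Q'\|_\infty \leq \|\varphi\|_{M_0A(\widetilde{G})}$, completing the proof.
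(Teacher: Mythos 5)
Your proposal is correct and follows essentially the same route as the paper: the factorization $h_0(\nu)=v_\nu l_{-2\nu}$ is exactly the paper's decomposition of $\widetilde{H}_0$ into the central one-parameter group $\widetilde{v}_t$ and the lift $\widetilde{K}_0$ of $L$, after which centrality of $\widetilde{v}_t$ in $\widetilde{K}$, the commutation of $\widetilde{l}_\theta$ with $\widetilde{D}(\alpha,0)$, and the $\mathrm{Int}(\widetilde{K})$-invariance give the $\mathrm{Int}(H_0)$-invariance, and the norm bound is the same Bo\.zejko--Fendler restriction argument the paper imports from \cite[Lemma 3.7]{haagerupdelaat1}. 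The only (harmless) deviations are that you justify the lifted commutation by a discrete-kernel continuity argument where the paper exponentiates commuting Lie algebra elements, and that your final step detours through $\widetilde{H}$-bi-invariance where conjugation-invariance under $\widetilde{l}_{2\nu}\in\widetilde{K}$ already suffices.
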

\begin{proof}
Let $L$ be the subgroup of $\U(2)$ as in Lemma \ref{lem:hoeldersu2u1}, and let $K_0$ (resp.~$\widetilde{K}_0$) be the corresponding subgroup of $K$ (resp.~$\widetilde{K}$). For $\widetilde{h}_0 \in \widetilde{H}_0$, we can write $\widetilde{h}_0=\widetilde{k}_0\widetilde{v}_t$ for some $\widetilde{k}_0 \in \widetilde{K}_0$ and $t \in \mathbb{R}$. Since $\widetilde{k}_0$ is the exponential of an element in the Lie algebra (note that this does not hold for every element in $\widetilde{G}$), and since this element of the Lie algebra commutes with the Lie algebra element corresponding to $\widetilde{D}(\alpha,0)$, the elements also commute on the Lie group level. Hence, for all $h \in H$ and $h_0=k_0v_t \in H_0$,
\begin{equation} \nonumber
\begin{split}
  \psi_{\alpha}(h_0hh_0^{-1}) &= \varphi(\widetilde{D}(\alpha,0)\widetilde{k}_0\widetilde{v}_t\widetilde{h}\widetilde{v}_t^{-1}\widetilde{k}_0^{-1}\widetilde{D}(\alpha,0)) \\
    &=\varphi(\widetilde{k}_0\widetilde{D}(\alpha,0)\widetilde{h}\widetilde{D}(\alpha,0)\widetilde{k}_0^{-1}) \\
    &=\varphi(\widetilde{D}(\alpha,0)\widetilde{h}\widetilde{D}(\alpha,0)) \\
    &=\psi_{\alpha}(h),
\end{split}
\end{equation}
so $\psi_{\alpha}$ is an element of $C(H \slash\slash H_0)$. The statement on the norms follows in the same way as in \cite[Lemma 3.7]{haagerupdelaat1}.
\end{proof}
Suppose that $\beta \geq \gamma \geq 0$, and let $D(\beta,\gamma)$ and $\widetilde{D}(\beta,\gamma)$ be as before. Let $S_{\beta,\gamma,t}$ be as in Notation \ref{ntn:notation}. In what follows, let $\|.\|_{HS}$ denote the Hilbert-Schmidt norm of an operator, and let $h \in H$ be such that
\begin{equation} \label{eq:hform}
  \iota(h)=\begin{pmatrix} a+ib & -c+id \\ c+id & a-ib \end{pmatrix},
\end{equation}
with $a^2+b^2+c^2+d^2=1$. The following is an easy adaptation of \cite[Lemma 3.8]{haagerupdelaat1}.
\begin{lem} \label{lem:kakeqs}
  Let $g=(g_0,t) \in \widetilde{G}$. Then $g \in S_{\beta,\gamma,t}$, where $\beta,\gamma \in \bbR$ are uniquely determined by the condition $\beta \geq \gamma \geq 0$ together with the equations
  \begin{equation} \nonumber
  \begin{split}
    \sinh^2 \beta + \sinh^2 \gamma &= \frac{1}{8}\|g_0-(g_0^T)^{-1}\|_{HS}^2, \\  
    \sinh^2 \beta \sinh^2 \gamma &= \frac{1}{16}\det(g_0-(g_0^T)^{-1}).
  \end{split}
  \end{equation}
\end{lem}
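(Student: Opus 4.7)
My plan is to reduce everything to the $KAK$ decomposition of the base group $G=\Sp(2,\bbR)$, exploit the fact that $K$ consists of orthogonal matrices, and then track the extra $t$-coordinate via Lemma~\ref{lem:exptzdbetagamma}. First I would observe that an element of the form $\begin{pmatrix} A & -B \\ B & A \end{pmatrix}$ with $A+iB\in\U(2)$ is automatically orthogonal (a direct block computation using $AA^T+BB^T=I_2$ and $AB^T=BA^T$ does it), and since $K$ is connected, $K\subset\SO(4)$. Hence for every $k\in K$ one has $(k^T)^{-1}=k$, so if we write $g_0=k_1 D(\beta,\gamma)k_2$ by Proposition~\ref{prp:kakdecomposition}, then
\[
  g_0-(g_0^T)^{-1}=k_1\bigl(D(\beta,\gamma)-D(-\beta,-\gamma)\bigr)k_2 = 2k_1\,\diag(\sinh\beta,\sinh\gamma,-\sinh\beta,-\sinh\gamma)\,k_2.
\]

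The remainder is then a matter of applying two orthogonal-invariance facts. The Hilbert--Schmidt norm is invariant under left or right multiplication by elements of $\Or(4)$, so
\[
  \|g_0-(g_0^T)^{-1}\|_{HS}^2 = 4\bigl(2\sinh^2\beta+2\sinh^2\gamma\bigr)=8\bigl(\sinh^2\beta+\sinh^2\gamma\bigr),
\]
which gives the first displayed equation. The determinant is multiplicative and $\det(k_1)=\det(k_2)=1$, so
\[
  \det\bigl(g_0-(g_0^T)^{-1}\bigr)=(2\sinh\beta)(2\sinh\gamma)(-2\sinh\beta)(-2\sinh\gamma)=16\sinh^2\beta\,\sinh^2\gamma,
\]
giving the second equation. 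Uniqueness of $(\beta,\gamma)$ under the constraint $\beta\geq\gamma\geq0$ follows because the sum and product determine $\{\sinh^2\beta,\sinh^2\gamma\}$ as the unordered root set of $X^2-SX+P=0$, and then the monotonicity of $\sinh^2$ on $[0,\infty)$ pins the pair down.

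It remains to check that the $t$-label is compatible. By Lemma~\ref{lem:exptzdbetagamma}, the canonical representative $\widetilde{v}_{t/2}\widetilde{D}(\beta,\gamma)$ of the class $S_{\beta,\gamma,t}$ equals $(v_{t/2}D(\beta,\gamma),t)$ in the realization~\eqref{eq:uncov}; in particular the second coordinate of the representative coincides with the subscript $t$. Since the equivalence relation defining $S_{\beta,\gamma,t}$ is generated by multiplication by $\widetilde{H}$ on either side and conjugation by the central one-parameter family $\widetilde{v}_s$, and since Lemma~\ref{lem:exptzdbetagamma} together with Notation~\ref{ntn:notation} shows that the parameter $t$ is the one read off from $c(g_0)=e^{it}$, the element $g=(g_0,t)$ indeed belongs to $S_{\beta,\gamma,t}$ for the pair $(\beta,\gamma)$ determined above.

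The only genuinely new ingredient compared with \cite[Lemma 3.8]{haagerupdelaat1} is the bookkeeping of the second coordinate; the algebraic heart of the argument (the two HS/determinant identities and the uniqueness) is a routine orthogonal-invariance computation once the inclusion $K\subset\SO(4)$ has been recorded. I expect the only mild subtlety is making sure that any potential sign ambiguity in the determinant is resolved by the even number of minus signs in the diagonal factor, which I have verified above.
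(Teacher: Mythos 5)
Your computation is correct and is exactly the ``easy adaptation of \cite[Lemma 3.8]{haagerupdelaat1}'' that the paper invokes without proof: the inclusion $K\subset\SO(4)$, the orthogonal invariance of $\|\cdot\|_{HS}$ and of $\det$ applied to $g_0-(g_0^T)^{-1}=2k_1\diag(\sinh\beta,\sinh\gamma,-\sinh\beta,-\sinh\gamma)k_2$, and the sum/product uniqueness argument are precisely the content of the cited lemma, and you have also (tacitly) corrected the typo $(g^T)^{-1}$ for $(g_0^T)^{-1}$ in the statement. The only step you pass over too quickly is the claim that the second coordinate is literally preserved (not merely modulo $2\pi$) under the operations generating the equivalence relation: reading $t$ off from $c(g_0)=e^{it}$ only determines it up to $2\pi\mathbb{Z}$. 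To close this, note that for $k\in K$ one has $C_{kg}=kC_g$ and $C_{gk}=C_gk$, whence $\eta(k,g)=\eta(g,k)=0$, so left and right multiplication by $\widetilde{H}=\{(h,0)\}$ fixes the $\mathbb{R}$-coordinate exactly; for conjugation by $\widetilde{v}_s$ the coordinate is unchanged modulo $2\pi$ and then exactly by continuity in $s$. With that observation your identification of the class as $S_{\beta,\gamma,t}$ is complete.
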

\begin{lem} \label{lem:hyperbolaseqs}
  Let $\alpha > 0$ and $\beta \geq \gamma \geq 0$. If $\widetilde{h} \in \widetilde{H}$ is such that the corresponding $h \in H$ satisfies \eqref{eq:hform}, and $c=\sqrt{1-a^2-b^2}=\frac{1}{\sqrt{2}}$ and $d=0$, then $\widetilde{D}(\alpha,0)\widetilde{h}\widetilde{D}(\alpha,0) \in S_{\beta,\gamma,t}$ if and only if
\begin{equation} \label{eq:hyperbolaseqs}
  \begin{split}
    & \sinh \beta \sinh \gamma = \frac{1}{2}\sinh^2 \alpha (1-a^2-b^2), \\  
    & \sinh \beta - \sinh \gamma = \sinh(2\alpha)|a|,\\
    & t=-\tan^{-1} \left(\frac{2ab}{\coth^2(\alpha)+a^2-b^2}\right).
  \end{split}
  \end{equation}
\end{lem}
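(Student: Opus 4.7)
The proof reduces to a direct $4 \times 4$ matrix calculation combined with the circle-function formula \eqref{eq:circlefunction}. Write $h \in H$ in its $M_4(\bbR)_0$ block form $h = \begin{pmatrix} A & -B \\ B & A \end{pmatrix}$ determined by $(a,b,c,d)$ with $d=0$, and expand $g_0 := D(\alpha,0)\, h\, D(\alpha,0)$ explicitly. Using that every $g \in \Sp(2,\bbR)$ in $2\times 2$ block form $\begin{pmatrix} P & Q \\ R & S \end{pmatrix}$ satisfies $(g^T)^{-1} = \begin{pmatrix} S & -R \\ -Q & P \end{pmatrix}$, compute $T := g_0 - (g_0^T)^{-1}$. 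The hypothesis $d=0$ forces the off-diagonal blocks of $T$ to cancel, so $T$ is block-diagonal with two $2 \times 2$ blocks, one the negative of the other.

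The first two equations then come from Lemma \ref{lem:kakeqs} applied to $T$: compute $\tfrac18 \|T\|_{HS}^2$ and $\tfrac{1}{16}\det T$ to obtain $\sinh^2\beta + \sinh^2\gamma$ and $\sinh^2\beta \sinh^2\gamma$ as explicit polynomials in $\sinh\alpha, \cosh\alpha, a, b, c$. Using the identity $(\sinh\beta - \sinh\gamma)^2 = (\sinh^2\beta + \sinh^2\gamma) - 2\sinh\beta\sinh\gamma$ together with $\sinh(2\alpha) = 2\sinh\alpha\cosh\alpha$ and $c^2 = 1 - a^2 - b^2$, and extracting square roots using $\beta \geq \gamma \geq 0$, one recovers the product and difference equations.

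For the third equation, apply the circle-function formula. A parallel block computation shows that $C_{g_0} := \tfrac12(g_0 + (g_0^T)^{-1})$ lies in $M_4(\bbR)_0$, so $\iota(C_{g_0})$ makes sense as a $2 \times 2$ complex matrix. Its determinant can be computed directly; using $\cosh(2\alpha) = 1 + 2\sinh^2\alpha$ together with $c^2 = 1 - a^2 - b^2$ one can factor $\sinh^2\alpha$ out of both real and imaginary parts, so that the ratio $\mathrm{Im}/\mathrm{Re}$ simplifies to $-2ab/(\coth^2\alpha + a^2 - b^2)$. Taking $\arg$ yields the claimed $\tan^{-1}$ expression for $\arg c(g_0)$.

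The main obstacle is pinning down the $t$-coordinate of $\widetilde D(\alpha,0)\widetilde h \widetilde D(\alpha,0) = (g_0,s) \in \widetilde G$ to the principal branch of $\tan^{-1}$, since $c(g_0) = e^{is}$ only determines $s$ modulo $2\pi$. I would use a continuity argument: the parameter set $\{(a,b) : a^2 + b^2 = c^2\}$ is connected, and the lift depends continuously on $(a,b,\alpha)$. At the test point $a = 1/\sqrt 2$, $b = 0$, the matrix $\iota(C_{g_0})$ is real with positive determinant, so $s = 0$, which matches the formula; Rawnsley's bound $|\eta(g_1,g_2)| < 2\pi$ then rules out sudden $2\pi$-jumps under small deformation, so the principal branch persists globally on the parameter set. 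The converse implication is immediate once one observes that $(\beta,\gamma,t)$ uniquely determines the equivalence class $S_{\beta,\gamma,t}$.
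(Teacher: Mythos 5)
Your overall route --- expand $g_0 = D(\alpha,0)\,h\,D(\alpha,0)$, feed it into Lemma \ref{lem:kakeqs} for the first two equations, and compute $\arg\det\iota(C_{g_0})$ for the third --- is exactly the paper's; the only cosmetic difference is that the paper cites its predecessor (\cite[Lemma 3.9]{haagerupdelaat1}) for the equivalence of the first two equations instead of redoing the $4\times 4$ block computation. The one place where your argument has a genuine gap is the determination of the branch of $\tan^{-1}$. Knowing that $\iota(C_{g_0})$ is real with positive determinant at the test point $a=\tfrac{1}{\sqrt 2}$, $b=0$ only tells you $c(g_0)=1$, i.e.\ $s\in 2\pi\mathbb{Z}$; it does not give $s=0$, since the fibre of the covering over $g_0$ contains $(g_0,2\pi k)$ for every $k\in\mathbb{Z}$. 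Likewise, Rawnsley's bound $|\eta(g_1,g_2)|<2\pi$ is not the relevant tool here: continuity of the lift is automatic from the covering-space structure, and what you actually need is an anchor point at which the value of $s$ itself (not merely $s$ modulo $2\pi$) is known. The paper supplies this by connecting every element $\widetilde D(\alpha,0)\widetilde h\widetilde D(\alpha,0)$ by a continuous path to the identity $\widetilde 1=(1,0)$ of $\widetilde G$ --- for instance by letting $\alpha\to 0$, which lands the path in $\widetilde H\cong\SU(2)$, where the second coordinate vanishes identically by simple connectedness --- and by observing that the candidate value of $t$ is uniformly bounded (since $\coth^2\alpha\ge 1$), so the integer $k$ is constant along the whole family and equals $0$ at the endpoint. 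With that repair your proof coincides with the paper's.
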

\begin{proof}
Let $\alpha > 0$ and $\beta \geq \gamma \geq 0$. By Lemma \ref{lem:kakeqs}, $\widetilde{D}(\alpha,0)\widetilde{h}\widetilde{D}(\alpha,0) \in S_{\beta,\gamma,t}$ if and only if
\begin{equation} \label{eq:hypeq1}
\begin{split}
    \sinh^2 \beta + \sinh^2 \gamma &= \frac{1}{8}\|D(\alpha,0)hD(\alpha,0)-D(\alpha,0)^{-1}hD(\alpha,0)^{-1}\|_{HS}^2 \\
      &=\sinh^2(2\alpha)a^2 + \sinh^2\alpha,
\end{split}
\end{equation}
and
\begin{equation} \label{eq:hypeq2}
\begin{split}
    \sinh^2 \beta \sinh^2 \gamma &= \frac{1}{16}\det(D(\alpha,0)hD(\alpha,0)-D(\alpha,0)^{-1}hD(\alpha,0)^{-1}) \\
      &=\frac{1}{4}\sinh^4\alpha,
\end{split}
\end{equation}
and, using the explicit expression of \eqref{eq:circlefunction},
\begin{equation} \label{eq:hypeq3}
\begin{split}
    e^{it} &= \frac{\det(\iota(C_{D(\alpha,0)hD(\alpha,0)}))}{|\det(\iota(C_{D(\alpha,0)hD(\alpha,0)}))|}.
\end{split}
\end{equation}
The fact that the first two equations of \eqref{eq:hyperbolaseqs} hold if and only if \eqref{eq:hypeq1} and \eqref{eq:hypeq2} hold was proved in \cite[Lemma 3.9]{haagerupdelaat1}. The rest of the proof consists of computing $t$. From \eqref{eq:hypeq3}, it follows that
\begin{equation} \label{eq:tk}
  t=\arg(\det(\iota(C_{D(\alpha,0)hD(\alpha,0)})))+2k\pi
\end{equation}
for some $k \in \mathbb{Z}$. It is elementary to check that
\begin{equation} \nonumber
\begin{split}
  \iota(C_{D(\alpha,0)hD(\alpha,0)}) &= \iota(\frac{1}{2}\left(D(\alpha,0)hD(\alpha,0)+D(-\alpha,0)hD(-\alpha,0)\right)) \\
    &= \begin{pmatrix} \cosh(2\alpha) a + ib & -\frac{\cosh(\alpha)}{\sqrt{2}} \\ \frac{\cosh(\alpha)}{\sqrt{2}} & a - ib \end{pmatrix}.
\end{split}
\end{equation}
Computing the determinant of this matrix yields
\[
  \det(\iota(C_{D(\alpha,0)hD(\alpha,0)}))=\cosh(2\alpha)a^2+b^2+iab-iab\cosh(2\alpha)+\frac{1}{2}\cosh^2(\alpha).
\]
Determining the argument is done by taking the inverse tangent of the the imaginary part of this determinant divided by its real part, which yields
\begin{equation} \nonumber
\begin{split}
  &\arg(\det(\iota(C_{D(\alpha,0)hD(\alpha,0)}))) = \tan^{-1} \left(\frac{ab(1-\cosh(2\alpha))}{\cosh(2\alpha)a^2+b^2+\frac{1}{2}\cosh^2(\alpha)}\right) \\
  &= -\tan^{-1} \left(\frac{2ab\sinh^2(\alpha)}{a^2+2a^2\sinh^2(\alpha)+b^2\cosh^2(\alpha)-b^2\sinh^2(\alpha)+\frac{1}{2}\cosh^2(\alpha)}\right) \\
  &= -\tan^{-1} \left(\frac{2ab\sinh^2(\alpha)}{(a^2-b^2)\sinh^2(\alpha)+(\frac{1}{2}+a^2+b^2)\cosh^2(\alpha)}\right) \\
  &= -\tan^{-1} \left(\frac{2ab}{\coth^2(\alpha) + a^2-b^2}\right).
\end{split}
\end{equation}
Since $\coth^2(\alpha) \geq 1$ for all $\alpha > 0$, the argument of the inverse tangent is clearly a bounded function. Hence, the value of $k$ in \eqref{eq:tk} is the same for the whole family of elements of the form $\widetilde{D}(\alpha,0)\widetilde{h}\widetilde{D}(\alpha,0)$. Since there exists a continuous path from any $\widetilde{D}(\alpha,0)\widetilde{h}\widetilde{D}(\alpha,0)$ to the identity element of $\widetilde{G}$, it follows that $k=0$.
\end{proof}

We now consider a different pair of groups in $\widetilde{G}$. The natural embedding of $\SO(2)$ in $\SU(2)$ gives rise to a subgroup $H_1$ of $H$ and to a subgroup $\widetilde{H}_1$ of $\widetilde{H}$. The pair $(H,H_1)$ is a compact Gelfand pair and was used in \cite{haagerupdelaat1} as well. If $h \in \SU(2)$ satisfies \eqref{eq:hform}, then the double cosets of $\SO(2)$ in $\SU(2)$ are labeled by $a^2-b^2+c^2-d^2$. Hence, every $\SO(2)$-bi-invariant function $\chi:\SU(2) \rightarrow \mathbb{C}$ is of the form $\chi(h)=\chi^0(a^2-b^2+c^2-d^2)$ for a certain function $\chi^0:[-1,1] \rightarrow \mathbb{C}$, since $\SO(2) \backslash \SU(2) \slash \SO(2) \cong [-1,1]$. The spherical functions for this Gelfand pair are indexed by $n \geq 0$, and given by $P_n(a^2-b^2+c^2-d^2)$, where $P_n$ denotes the $n^{\textrm{th}}$ Legendre polynomial. For details, we refer to \cite{haagerupdelaat1}.

The following estimate was proved (in this explicit form) in \cite[Lemma 3.11]{haagerupdelaat1}. Similar estimates were already proved in \cite{lafforguestrengthenedpropertyt}, and, as was remarked in \cite[Remark 3.12]{haagerupdelaat1}, they can also be obtained from Szeg\"o's book \cite{szegoe}.
\begin{lem} \label{lem:legendreestimates}
	For all non-negative integers $n$,
	\[
	  |P_n(x)-P_n(y)|\leq 4|x-y|^{\frac{1}{2}}
	\]
for $x,y \in [-\frac{1}{2},\frac{1}{2}]$, i.e., the Legendre polynomials are H\"older continuous on $[-\frac{1}{2},\frac{1}{2}]$ with exponent $\frac{1}{2}$.
\end{lem}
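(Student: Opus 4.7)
The plan is to work on $[-1/2, 1/2]$, which under $x = \cos\theta$ corresponds to $\theta \in [\pi/3, 2\pi/3]$, where $\sin\theta \geq \sqrt{3}/2$ is uniformly bounded away from zero. On this range two classical estimates are available for $n \geq 1$: a pointwise bound $|P_n(x)| \leq C_0/\sqrt{n}$ and a derivative bound $|P_n'(x)| \leq C_1\sqrt{n}$, with absolute constants $C_0, C_1$. Both follow from the Laplace-Heine asymptotic
\[
  P_n(\cos\theta) = \sqrt{\frac{2}{\pi n \sin\theta}}\cos\bigl((n+\tfrac{1}{2})\theta - \tfrac{\pi}{4}\bigr) + O(n^{-3/2}),
\]
recorded in Szeg\H{o}'s book \cite{szegoe}. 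For the derivative, one differentiates the asymptotic in $\theta$ (the leading term acquires a factor $n+\tfrac{1}{2}$, which combines with the $1/\sqrt{n}$ to give $\sqrt{n}$) and converts back using $P_n'(x) = -(\sin\theta)^{-1}(d/d\theta)P_n(\cos\theta)$; the extra factor $1/\sin\theta$ is harmless since $\sin\theta \geq \sqrt{3}/2$.

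With these two ingredients, the H\"older estimate follows from a standard case split. Fix $n \geq 1$ and $x, y \in [-1/2, 1/2]$. If $|x-y| \leq 1/n$, the mean value theorem gives
\[
  |P_n(x) - P_n(y)| \leq C_1\sqrt{n}\,|x-y| \leq C_1|x-y|^{1/2},
\]
since $\sqrt{n}\,|x-y| \leq \sqrt{|x-y|}$ under the case assumption. If $|x-y| > 1/n$, the triangle inequality together with the pointwise bound gives
\[
  |P_n(x) - P_n(y)| \leq \frac{2C_0}{\sqrt{n}} \leq 2C_0|x-y|^{1/2}.
\]
The case $n = 0$ is trivial since $P_0 \equiv 1$. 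Setting $C = \max(C_1, 2C_0)$ then yields a uniform H\"older bound with exponent $\tfrac{1}{2}$.

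The main remaining point is to pin down the explicit constant $4$ rather than a generic $\max(C_1, 2C_0)$. This is a matter of invoking sharp (not merely asymptotic) forms of the Szeg\H{o} and Bernstein-type estimates with carefully tracked constants, and optimizing the threshold in the case split. Since closely related explicit inequalities were established in the authors' earlier paper \cite{haagerupdelaat1} and in Lafforgue's work \cite{lafforguestrengthenedpropertyt}, the value $4$ is entirely plausible, and the authors here simply invoke the form previously established in \cite{haagerupdelaat1}.
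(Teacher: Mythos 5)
Your proposal matches the paper's route: the lemma is simply quoted from \cite[Lemma 3.11]{haagerupdelaat1}, whose proof (the two constituent estimates are restated in this paper as Lemma \ref{lem:hoeldersu2p}) rests on exactly your two ingredients, a bound $|P_n(x)|\leq 2/\sqrt{n}$ and a bound $|P_n'(x)|\leq 4\sqrt{n}$ on $[-\tfrac{1}{2},\tfrac{1}{2}]$, combined there via $\min(a,b)\leq\sqrt{ab}$ rather than your case split at $|x-y|=1/n$ --- an immaterial difference that yields the same exponent and the same constant $4$. The one soft spot is that you derive the derivative bound by differentiating the Laplace--Heine asymptotic, which is not automatically legitimate; to make it rigorous one should instead use the identity $P_n'=\tfrac{n+1}{2}P_{n-1}^{(1,1)}$ together with the Bernstein--Szeg\H{o}-type pointwise bound for Jacobi polynomials (all in \cite{szegoe}), which is also how the explicit constants behind the factor $4$ are tracked.
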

\begin{lem}
  Let $\varphi \in M_0A(\SO(2) \backslash \SU(2) / \SO(2))$. Suppose that $h \in \SU(2)$ is of the form
\[
  h=\left( \begin{matrix} a+ib & -c+id \\ c+id & a-ib \end{matrix} \right),
\]
where $a,b,c,d \in \mathbb{R}$ are such that $a^2+b^2+c^2+d^2=1$. Then $\varphi(h)=\varphi^0(r)$, where $r=a^2-b^2+c^2-d^2$, for a certain function $\varphi^0:[-1,1] \rightarrow \bbC$, and
\[
  \varphi^0 = \sum_{n \geq 0} c_nP_n
\]
such that $\sum_{n \geq 0} |c_n|=\|\varphi\|_{M_0A(\SU(2))}$. Moreover, $\varphi^0$ satisfies
\[
  |\varphi^0(r_1)-\varphi^0(r_2)| \leq 4|r_1-r_2|^{\frac{1}{2}}\|\varphi\|_{M_0A(\widetilde{G})}
\]
for all $r_1,r_2 \in [-\frac{1}{2},\frac{1}{2}]$.
\end{lem}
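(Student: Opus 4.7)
The plan is to deduce both assertions directly from Proposition \ref{prp:cbfmcgp} applied to the compact Gelfand pair $(\SU(2),\SO(2))$, combined with Lemma \ref{lem:legendreestimates}. The key observation, already recorded in the paragraph preceding the statement, is that the double coset space $\SO(2) \backslash \SU(2) / \SO(2)$ is parametrised by $r = a^2 - b^2 + c^2 - d^2 \in [-1,1]$ via the entries of the matrix $h$, and the positive definite spherical functions for this pair are exactly $h \mapsto P_n(a^2-b^2+c^2-d^2)$ for $n \geq 0$.

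First I would invoke Proposition \ref{prp:cbfmcgp} with $G = \SU(2)$ and $K = \SO(2)$. Since $\varphi$ is $\SO(2)$-bi-invariant and lies in $M_0A(\SU(2))$, the proposition supplies a unique decomposition $\varphi = \sum_{n \geq 0} c_n h_n$ where $h_n(h) = P_n(a^2-b^2+c^2-d^2)$ and $\sum_n |c_n| = \|\varphi\|_{M_0A(\SU(2))}$. Because $\varphi$ is constant on double cosets and these are indexed by $r$, this at once yields a well-defined function $\varphi^0 \colon [-1,1] \to \bbC$ with $\varphi(h) = \varphi^0(r)$ and $\varphi^0 = \sum_{n \geq 0} c_n P_n$ pointwise, with the advertised equality of norms.

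For the H\"older estimate, I would simply apply Lemma \ref{lem:legendreestimates} termwise. Indeed, for $r_1, r_2 \in [-\tfrac{1}{2},\tfrac{1}{2}]$, the triangle inequality and the uniform bound $|P_n(r_1)-P_n(r_2)| \leq 4|r_1-r_2|^{1/2}$ give
\[
  |\varphi^0(r_1) - \varphi^0(r_2)| \leq \sum_{n \geq 0} |c_n|\,|P_n(r_1)-P_n(r_2)| \leq 4|r_1-r_2|^{1/2} \sum_{n \geq 0} |c_n|,
\]
and by the first part this last sum equals $\|\varphi\|_{M_0A(\SU(2))}$, yielding the desired estimate. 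The series manipulations are legitimate because $\sum_n |c_n| < \infty$, so the decomposition converges absolutely and uniformly on $\SU(2)$.

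There is no substantial obstacle here: all the work has already been done in establishing Proposition \ref{prp:cbfmcgp} and Lemma \ref{lem:legendreestimates}, and the only thing to verify is that the spherical function formula $P_n(a^2-b^2+c^2-d^2)$ is the correct one for the Gelfand pair $(\SU(2),\SO(2))$, which is standard and already cited in the excerpt from \cite{haagerupdelaat1}. The present lemma is thus a direct analogue for $(\SU(2),\SO(2))$ of the decomposition for $(\U(2),L)$ used earlier, with Jacobi/disc polynomials replaced by Legendre polynomials and the H\"older exponent improved from $\tfrac{1}{4}$ to $\tfrac{1}{2}$.
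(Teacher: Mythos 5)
Your proposal is correct and follows exactly the route the paper itself takes: the paper disposes of this lemma with the single remark that it ``follows directly from Proposition \ref{prp:cbfmcgp} and Lemma \ref{lem:legendreestimates}'', and your argument---applying Proposition \ref{prp:cbfmcgp} to the compact Gelfand pair $(\SU(2),\SO(2))$ with Legendre polynomials as spherical functions, then summing the uniform H\"older bound termwise against $\sum_n |c_n| = \|\varphi\|_{M_0A(\SU(2))}$---is precisely the intended filling-in of that remark.
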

The above lemma follows directly from Proposition \ref{prp:cbfmcgp} and Lemma \ref{lem:legendreestimates}. Note that we can identify $M_0A(H_1 \backslash H / H_1)$ with $M_0A(\SO(2) \backslash \SU(2) \slash \SO(2))$.
\begin{ntn}
  In what follows, we use the notation $v=v_{\frac{\pi}{4}}$ and $\widetilde{v}=\widetilde{v}_{\frac{\pi}{4}}$.
\end{ntn}
The proof of the following proposition is similar to the proof of Proposition \ref{prp:psihyp}.
\begin{prp} \label{prp:chialpha}
  Let $\varphi \in M_0A(\widetilde{G}) \cap \mathcal{C}$. For $\alpha \geq 0$, let $\chi_{\alpha}^{\prime}:H \rightarrow \mathbb{C}$ be given by $h \mapsto \varphi(\widetilde{D}(\alpha,\alpha)\widetilde{v}\widetilde{h}\widetilde{D}(\alpha,\alpha))$, and let $\chi_{\alpha}^{\prime\prime}:H \rightarrow \mathbb{C}$ be given by $h \mapsto \varphi(\widetilde{D}(\alpha,\alpha)\widetilde{v}^{-1}\widetilde{h}\widetilde{D}(\alpha,\alpha))$. These functions are elements of $M_0A(H_1 \backslash H / H_1)$, and $\|\chi_{\alpha}^{\prime}\|_{M_0A(H)} \leq \|\varphi\|_{M_0A(\widetilde{G})}$ and $\|\chi_{\alpha}^{\prime\prime}\|_{M_0A(H)} \leq \|\varphi\|_{M_0A(\widetilde{G})}$. 
\end{prp}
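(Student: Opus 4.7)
The plan is to follow the template of Proposition~\ref{prp:psihyp} essentially verbatim, with the extra factor $\widetilde{v}^{\pm 1}$ handled by its centrality in $\widetilde{K}$. First I would establish two commutation relations inside $\widetilde{G}$: (i) $\widetilde{h}_1 \widetilde{D}(\alpha,\alpha) = \widetilde{D}(\alpha,\alpha)\widetilde{h}_1$ for every $\widetilde{h}_1 \in \widetilde{H}_1$, and (ii) $\widetilde{h}_1 \widetilde{v} = \widetilde{v} \widetilde{h}_1$. For (i), a direct matrix computation shows that the block-diagonal $\SO(2)$-rotations defining $H_1$ commute with $D(\alpha,\alpha)=\diag(e^\alpha,e^\alpha,e^{-\alpha},e^{-\alpha})$ in $G$; the crucial point is the coincidence of the first two and of the last two diagonal entries, which is special to the degenerate direction $\beta=\gamma$ of the Weyl chamber. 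Equivalently, the Lie-algebra generator of $H_1$ commutes with the generator of the one-parameter subgroup $t \mapsto \widetilde{D}(t,t)$, and commuting Lie-algebra elements exponentiate to commuting one-parameter subgroups in any Lie group. For (ii), the generator $Z$ of $\widetilde{v}_t$ spans the center of $\mathfrak{k}=\mathfrak{su}(2)\oplus\mathbb{R}$, so it commutes with the generator of $H_1 \subset K$, and the same exponentiation argument applies.

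With these commutation relations in hand, the $H_1$-bi-invariance of $\chi_\alpha'$ follows from a short string of rearrangements: for $h_1,h_2\in H_1$ and $h\in H$,
\[
  \chi_\alpha'(h_1hh_2)=\varphi(\widetilde{D}(\alpha,\alpha)\widetilde{v}\widetilde{h}_1\widetilde{h}\widetilde{h}_2\widetilde{D}(\alpha,\alpha))=\varphi(\widetilde{h}_1\widetilde{D}(\alpha,\alpha)\widetilde{v}\widetilde{h}\widetilde{D}(\alpha,\alpha)\widetilde{h}_2)=\chi_\alpha'(h),
\]
where the last equality uses the $\widetilde{H}$-bi-invariance of $\varphi$. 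The argument for $\chi_\alpha''$ is identical with $\widetilde{v}$ replaced by $\widetilde{v}^{-1}$. For the norm bound I would invoke the Bo\.zejko--Fendler representation $\varphi(g_2^{-1}g_1)=\langle P(g_1),Q(g_2)\rangle$ with $\|P\|_\infty\|Q\|_\infty$ arbitrarily close to $\|\varphi\|_{M_0A(\widetilde{G})}$, and then set $g_1=\widetilde{h}_1\widetilde{D}(\alpha,\alpha)$ and $g_2=\widetilde{h}_2\widetilde{v}^{-1}\widetilde{D}(\alpha,\alpha)^{-1}$; this yields $g_2^{-1}g_1=\widetilde{D}(\alpha,\alpha)\widetilde{v}\widetilde{h}_2^{-1}\widetilde{h}_1\widetilde{D}(\alpha,\alpha)$, so defining $P'(h)=P(\widetilde{h}\widetilde{D}(\alpha,\alpha))$ and $Q'(h)=Q(\widetilde{h}\widetilde{v}^{-1}\widetilde{D}(\alpha,\alpha)^{-1})$ produces a Bo\.zejko--Fendler decomposition of $\chi_\alpha'$ satisfying $\|P'\|_\infty\|Q'\|_\infty\le\|P\|_\infty\|Q\|_\infty$. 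Taking the infimum gives $\|\chi_\alpha'\|_{M_0A(H)}\le\|\varphi\|_{M_0A(\widetilde{G})}$; the analogous choice with $\widetilde{v}$ in place of $\widetilde{v}^{-1}$ handles $\chi_\alpha''$.

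There is no substantial obstacle here: the proof is mostly bookkeeping once the commutation relations above are established. The only point that requires a little care is transferring commutation relations that are manifest at the matrix level in $G$ to the universal cover $\widetilde{G}$, which is handled uniformly by the principle that commuting Lie-algebra elements exponentiate to commuting elements in any Lie group.
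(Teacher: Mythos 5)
Your proposal is correct and follows essentially the same route as the paper, which simply defers to the proof of Proposition \ref{prp:psihyp}: the commutation of $\widetilde{H}_1$ with $\widetilde{D}(\alpha,\alpha)$ and with $\widetilde{v}$ is transferred to $\widetilde{G}$ by exponentiating commuting Lie-algebra elements, the bi-invariance then follows from the $\widetilde{H}$-bi-invariance of $\varphi$ (here one does not even need the $\mathrm{Int}(\widetilde{K})$-invariance that Proposition \ref{prp:psihyp} required, since $\widetilde{H}_1\subset\widetilde{H}$), and the norm bound is the standard Bo\.zejko--Fendler restriction argument from \cite[Lemma 3.7]{haagerupdelaat1}. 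All steps check out.
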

Suppose that $\beta \geq \gamma \geq 0$, and let $D(\beta,\gamma)$ and $\widetilde{D}(\beta,\gamma)$ be as before.
\begin{lem} \label{lem:circleseqs}
Let $\alpha > 0$ and $\beta \geq \gamma \geq 0$. If $\widetilde{h}$ is such that the corresponding $h$ satisfies \eqref{eq:hform}, then $\widetilde{D}(\alpha,\alpha)\widetilde{v}\widetilde{h}\widetilde{D}(\alpha,\alpha) \in S_{\beta,\gamma,t}$ if and only if
\begin{equation} \nonumber
  \begin{cases}
    & \sinh^2 \beta + \sinh^2 \gamma = \sinh^2 (2\alpha), \\  
    & \sinh \beta \sinh \gamma = \frac{1}{2}\sinh^2(2\alpha)|r|, \\
    & t=\frac{\pi}{2}-\tan^{-1} \left(\frac{\sinh^2(2\alpha)}{2\cosh(2\alpha)}r \right),
  \end{cases}
  \end{equation}
and 
$\widetilde{D}(\alpha,\alpha)\widetilde{v}^{-1}\widetilde{h}\widetilde{D}(\alpha,\alpha) \in S_{\beta,\gamma,t}$ if and only if
\begin{equation} \label{eq:lemeq1}
  \begin{cases}
    & \sinh^2 \beta + \sinh^2 \gamma = \sinh^2 (2\alpha), \\  
    & \sinh \beta \sinh \gamma = \frac{1}{2}\sinh^2(2\alpha)|r|, \\
    & t=-\frac{\pi}{2}+\tan^{-1} \left(\frac{\sinh^2(2\alpha)}{2\cosh(2\alpha)}r \right),
  \end{cases}
  \end{equation}
where $r=a^2-b^2+c^2-d^2$.
\end{lem}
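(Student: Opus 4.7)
The plan is to follow the same template as Lemma \ref{lem:hyperbolaseqs}: derive the first two equations from Lemma \ref{lem:kakeqs} applied to the projection $D(\alpha,\alpha)v^{\pm1}hD(\alpha,\alpha) \in G$, and then compute $t$ from the circle function \eqref{eq:circlefunction}, resolving the $2\pi\mathbb{Z}$ ambiguity by continuity. The first two equations, which only depend on the $G$-data, were essentially established in the analogous Lemma 3.10 of \cite{haagerupdelaat1} (since $D(\alpha,\alpha)v^{\pm1}hD(\alpha,\alpha)$ projects onto an element of $G$ whose $KAK$-parameters are determined by $\|g_0-(g_0^T)^{-1}\|_{HS}^2$ and $\det(g_0-(g_0^T)^{-1})$, which are insensitive to replacing $v$ by $v^{-1}$ since both contributions turn out to depend on $r^2$). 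So the real work is the formula for $t$.

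First I would observe two algebraic facts that make the computation of $\iota(C_{g_0})$ tractable. Using $g^T J g = J$ one gets $(g^T)^{-1} = -JgJ$ for any $g \in \Sp(2,\bbR)$, which gives the pleasant identity $2C_g = g - JgJ$; writing $g = \begin{pmatrix} P & Q \\ R & S \end{pmatrix}$ in $2\times 2$ blocks, this equals $\begin{pmatrix} P+S & Q-R \\ R-Q & P+S \end{pmatrix}$, which is manifestly in $M_4(\bbR)_0$, so $\iota(C_g) = \tfrac{1}{2}((P+S) + i(R-Q))$ makes sense. Second, a direct block computation shows $\iota(v_t) = e^{it}I_2$, so $\iota(vh) = e^{i\pi/4}\iota(h)$ and $\iota(v^{-1}h) = e^{-i\pi/4}\iota(h)$, and since $D(\alpha,\alpha)$ is $e^\alpha$ on the top two coordinates and $e^{-\alpha}$ on the bottom two, a direct calculation gives
\[
  \iota(C_{D(\alpha,\alpha)vhD(\alpha,\alpha)}) = e^{i\pi/4}\cosh^2(\alpha)\,z + e^{-i\pi/4}\sinh^2(\alpha)\,\bar{z},
\]
where $z = \iota(h)$ and $\bar{z}$ denotes the entrywise complex conjugate.

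Next I would compute the determinant. Writing $z = \begin{pmatrix} a+ib & -c+id \\ c+id & a-ib \end{pmatrix}$ and setting $p = \lambda+\mu$, $q = \lambda-\mu$ with $\lambda = e^{i\pi/4}\cosh^2(\alpha)$, $\mu = e^{-i\pi/4}\sinh^2(\alpha)$, a direct $2\times 2$ determinant expansion gives
\[
  \det(\lambda z + \mu\bar{z}) = p^2(a^2+c^2) + q^2(b^2+d^2).
\]
The key simplifying step uses $a^2+b^2+c^2+d^2 = 1$ to write $a^2+c^2 = (1+r)/2$ and $b^2+d^2 = (1-r)/2$, giving $\det = \tfrac{1}{2}((p^2+q^2) + r(p^2 - q^2))$. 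Expanding $p^2, q^2$ using double-angle identities yields $p^2+q^2 = 2i\cosh(2\alpha)$ and $p^2-q^2 = \sinh^2(2\alpha)$, so
\[
  \det(\iota(C_{g_0})) = \tfrac{r}{2}\sinh^2(2\alpha) + i\cosh(2\alpha).
\]
Since the imaginary part is strictly positive, applying the identity $\arg(x+iy) = \tfrac{\pi}{2} - \tan^{-1}(x/y)$ for $y>0$ immediately gives $\arg(\det(\iota(C_{g_0}))) = \tfrac{\pi}{2} - \tan^{-1}\bigl(\tfrac{\sinh^2(2\alpha)}{2\cosh(2\alpha)}\,r\bigr)$. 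By \eqref{eq:circlefunction} and \eqref{eq:uncov}, $t$ equals this argument modulo $2\pi$; as in the proof of Lemma \ref{lem:hyperbolaseqs}, the argument is bounded in absolute value by $\pi$, and since one can continuously deform $\widetilde{D}(\alpha,\alpha)\widetilde{v}\widetilde{h}\widetilde{D}(\alpha,\alpha)$ to $\widetilde{v} = (v,\pi/2)$ (for which $t = \pi/2$), the integer ambiguity is fixed to $k=0$.

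For the $\widetilde{v}^{-1}$ case the only change is replacing $\pi/4$ by $-\pi/4$ throughout, which flips $p^2+q^2$ to $-2i\cosh(2\alpha)$ while leaving $p^2-q^2$ unchanged, yielding $\det(\iota(C_{g_0})) = \tfrac{r}{2}\sinh^2(2\alpha) - i\cosh(2\alpha)$; applying the analogous formula $\arg(x+iy) = -\tfrac{\pi}{2} - \tan^{-1}(x/y)$ for $y<0$ gives precisely the sign flip claimed in \eqref{eq:lemeq1}. The main obstacle is the determinant computation and its simplification: once the identity $\iota(v_t) = e^{it}I_2$ and the block formula for $C_g$ are in hand, everything reduces to a bookkeeping exercise, but getting the final expression in terms of $r = a^2-b^2+c^2-d^2$ rather than individual entries is the one nontrivial simplification and parallels the corresponding step in Lemma 3.10 of \cite{haagerupdelaat1}.
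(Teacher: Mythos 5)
Your proposal is correct and follows the same route as the paper: the first two equations come from Lemma \ref{lem:kakeqs} together with the computations already done for $\mathrm{Sp}(2,\mathbb{R})$, the $t$-value comes from computing $\det(\iota(C_{g_0}))=\tfrac{r}{2}\sinh^2(2\alpha)\pm i\cosh(2\alpha)$ and taking its argument, and the integer ambiguity is killed by continuity; all of your intermediate identities check out against the paper's explicit matrix. The only (cosmetic) difference is that you organize the determinant computation via $2C_g=g-JgJ$, $\iota(v_t)=e^{it}I_2$ and $\iota(C_{g_0})=\lambda z+\mu\bar z$, whereas the paper writes out the $2\times 2$ matrix entrywise and expands directly.
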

\begin{proof}
Let $\alpha > 0$ and $\beta \geq \gamma \geq 0$. By Lemma \ref{lem:kakeqs}, $\widetilde{D}(\alpha,\alpha)\widetilde{v}\widetilde{h}\widetilde{D}(\alpha,\alpha) \in S_{\beta,\gamma,t}$ if and only if
\begin{equation} \label{eq:cireq1}
\begin{split}
    \sinh^2 \beta + \sinh^2 \gamma &= \frac{1}{8}\|D(\alpha,\alpha)vhD(\alpha,\alpha)-D(\alpha,\alpha)^{-1}vhD(\alpha,\alpha)^{-1}\|_{HS}^2 \\
      &=\sinh^2(2\alpha),
\end{split}
\end{equation}
and
\begin{equation} \label{eq:cireq2}
\begin{split}
    \sinh^2 \beta \sinh^2 \gamma &= \frac{1}{16}\det(D(\alpha,\alpha)vhD(\alpha,0)-D(\alpha,\alpha)^{-1}vhD(\alpha,\alpha)^{-1}) \\
      &=\frac{1}{4}\sinh^4(2\alpha)r^2,
\end{split}
\end{equation}
and, using the explicit expression of \eqref{eq:circlefunction},
\begin{equation} \label{eq:cireq3}
\begin{split}
    e^{it} &= \frac{\det(\iota(C_{D(\alpha,\alpha)vhD(\alpha,\alpha)}))}{|\det(\iota(C_{D(\alpha,\alpha)vhD(\alpha,\alpha)}))|}.
\end{split}
\end{equation}
The first two equations of \eqref{eq:lemeq1} are now obvious. The last part of the proof consists of computing $t$. From \eqref{eq:cireq3}, it follows that
\[
t=\arg(\det(\iota(C_{D(\alpha,\alpha)vhD(\alpha,\alpha)})))+2k\pi
\]
for some $k \in \mathbb{Z}$. It is elementary to check that
\begin{equation} \nonumber
\begin{split}
  \iota(C_{D(\alpha,\alpha)vhD(\alpha,\alpha)}) = \iota(\frac{1}{2}(D(\alpha,\alpha)vhD(\alpha,\alpha)+D(-\alpha,-\alpha)vhD(-\alpha,-\alpha))) \\
    = \frac{1}{\sqrt{2}} \begin{pmatrix} \cosh(2\alpha) (a-b) + i(a+b) & -\cosh(2\alpha)(c+d)-i(c-d) \\ \cosh(2\alpha)(c-d) + i(c+d) & \cosh(2\alpha)(a+b) + i(a-b) \end{pmatrix}.
\end{split}
\end{equation}
Computing the determinant of this matrix yields
\begin{equation} \nonumber
\begin{split} 
  &\det(\iota(C_{D(\alpha,\alpha)vhD(\alpha,\alpha)})) \\
  &= \frac{1}{2}(a^2-b^2+c^2-d^2)(\cosh^2(2\alpha)-1)+i(a^2+b^2+c^2+d^2)\cosh(2\alpha).
\end{split}
\end{equation}
Determining the argument is done by taking the inverse tangent of the the imaginary part of this determinant divided by its real part. By $\arg(x+iy)=\tan^{-1}(\frac{y}{x})=\frac{\pi}{2}-\tan^{-1}(\frac{x}{y})$ for $x \neq 0$ and $y > 0$, we obtain
\begin{equation} \nonumber
\begin{split}
  \arg(\det(\iota(C_{D(\alpha,\alpha)vhD(\alpha,\alpha)}))) &= \frac{\pi}{2} - \tan^{-1} \left(\frac{(a^2-b^2+c^2-d^2)(2\cosh^2(2\alpha)-1)}{(a^2+b^2+c^2+d^2)\cosh(2\alpha)}\right) \\
    &= \frac{\pi}{2} - \tan^{-1} \left(\frac{\sinh^2(2\alpha)}{2\cosh(2\alpha)}r \right).
\end{split}
\end{equation}
The second inclusion we have to consider, i.e., $\widetilde{D}(\alpha,\alpha)\widetilde{v}^{-1}\widetilde{h}\widetilde{D}(\alpha,\alpha) \in S_{\beta,\gamma,t}$ is very similar. It is easy to check that this holds if and only if \eqref{eq:cireq1} and \eqref{eq:cireq2} hold. As for the value of $t$, it is very similar to the first case. Indeed, it is again elementary to check that
\begin{equation} \nonumber
\begin{split}
  \iota(C_{D(\alpha,\alpha)v^{-1}hD(\alpha,\alpha)}) = \iota(\frac{1}{2}(D(\alpha,\alpha)v^{-1}hD(\alpha,\alpha)+D(-\alpha,-\alpha)v^{-1}hD(-\alpha,-\alpha))) \\
    = \frac{1}{\sqrt{2}} \begin{pmatrix} \cosh(2\alpha) (a+b) - i(a-b) & -\cosh(2\alpha)(c-d)+i(c+d) \\ \cosh(2\alpha)(c+d) - i(c-d) & \cosh(2\alpha)(a-b) - i(a+b) \end{pmatrix}.
\end{split}
\end{equation}
It follows that
\[
  \arg(\det(\iota(C_{D(\alpha,\alpha)v^{-1}hD(\alpha,\alpha)}))) = -\frac{\pi}{2} + \tan^{-1} \left(\frac{\sinh^2(2\alpha)}{2\cosh(2\alpha)}r \right).
\]
By an argument similar to the one in the proof of Lemma \ref{lem:hyperbolaseqs}, it follows that $k=0$, giving the correct values of $t$.
\end{proof}
We will now prove that multipliers in $M_0A(\widetilde{G}) \cap \mathcal{C}$ are almost constant on certain paths in the groups.
\begin{prp} \label{prp:tdependence}
  Let $\varphi \in M_0A(\widetilde{G}) \cap \mathcal{C}$. If $\alpha > 0$ and $|\tau_1-\tau_2| \leq \frac{\pi}{2}$, then
\[
  |\dot{\varphi}(2\alpha,0,\tau_1)-\dot{\varphi}(2\alpha,0,\tau_2)| \leq 24 e^{-\alpha} \|\varphi\|_{M_0A(\widetilde{G})}.
\]
\end{prp}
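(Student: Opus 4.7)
The plan combines Proposition \ref{prp:chialpha}, Lemma \ref{lem:circleseqs}, the H\"older-$\tfrac12$ estimate for multipliers on $(\SU(2),\SO(2))$ transferred from Lemma \ref{lem:legendreestimates}, and the shift invariance of Lemma \ref{lem:cinvariance}. The heart of the argument is to realise $\dot\varphi(2\alpha,0,\tau_1)$ and $\dot\varphi(2\alpha,0,\tau_2)$ as the $r=0$ values of two \emph{different} shifted $\chi_\alpha'$-type multipliers and to bridge them through a common intermediate value produced by the $r\leftrightarrow -r$ symmetry of Lemma \ref{lem:circleseqs}. First I would dispose of the easy regime: since $|\dot\varphi(\cdot)-\dot\varphi(\cdot)|\le 2\|\varphi\|_{M_0A(\widetilde G)}$ and $24e^{-\alpha}\ge 2$ for $\alpha\le \ln 12$, the trivial bound settles $\alpha\le \ln 12$. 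Henceforth assume $\alpha>\ln 12$, and via Lemma \ref{lem:cinvariance} applied to the shift $s=\tfrac14(\tau_1+\tau_2-\pi)$ reduce to the symmetric case $\tau_1=\tfrac\pi2+\delta$, $\tau_2=\tfrac\pi2-\delta$ with $|\delta|\le \pi/4$.

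Set $\lambda=\sinh^2(2\alpha)/(2\cosh(2\alpha))$ and introduce the shifted multipliers $\varphi^{\pm}(g):=\varphi(\widetilde v_{\pm\delta/2}g)\in M_0A(\widetilde G)\cap\mathcal C$; by Lemma \ref{lem:cinvariance} these have the same $M_0A$-norm as $\varphi$ and satisfy $\dot{\varphi^{\pm}}(\beta,\gamma,t)=\dot\varphi(\beta,\gamma,t\pm\delta)$. Applying Proposition \ref{prp:chialpha} yields $\chi_\alpha'^{\varphi^{\pm}}\in M_0A(H_1\backslash H/H_1)$ of norm at most $\|\varphi\|_{M_0A(\widetilde G)}$. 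By Lemma \ref{lem:circleseqs} one has $\chi_\alpha'^{\varphi^{\pm}}(h(0))=\dot\varphi(2\alpha,0,\tfrac\pi2\pm\delta)$, while at the symmetric pair $r_*=-\tan\delta/\lambda$ and $r_*'=\tan\delta/\lambda$ the identities $t(r_*)=\tfrac\pi2+\delta$, $t(r_*')=\tfrac\pi2-\delta$, together with the fact that $(\beta(r),\gamma(r))$ depends only on $|r|$, force the \emph{common} value
\[
 \chi_\alpha'^{\varphi^{-}}(h(r_*))\;=\;\dot\varphi(\beta^*,\gamma^*,\tfrac\pi2)\;=\;\chi_\alpha'^{\varphi^{+}}(h(r_*')),
\]
where $(\beta^*,\gamma^*):=(\beta(r_*),\gamma(r_*))$ and the $\pm\delta$ coming from the $r$-choice is cancelled by the $\mp\delta$ coming from the left-multiplication shift.

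For $\alpha>\ln 12$ one checks $|r_*|=|r_*'|=|\tan\delta|/\lambda\le 1/\lambda\le 1/2$, so the lemma following Lemma \ref{lem:legendreestimates} applies to $\chi_\alpha'^{\varphi^{\pm}}$ and yields
\[
 \bigl|\dot\varphi(\beta^*,\gamma^*,\tfrac\pi2)-\dot\varphi(2\alpha,0,\tfrac\pi2\mp\delta)\bigr|\le 4\sqrt{|\tan\delta|/\lambda}\,\|\varphi\|_{M_0A(\widetilde G)}
\]
for each sign. Summing the two estimates through the common value $\dot\varphi(\beta^*,\gamma^*,\tfrac\pi2)$ via the triangle inequality eliminates the off-diagonal dependence on $(\beta^*,\gamma^*)$ and gives
\[
 |\dot\varphi(2\alpha,0,\tau_1)-\dot\varphi(2\alpha,0,\tau_2)|\le \frac{8}{\sqrt{\lambda}}\,\|\varphi\|_{M_0A(\widetilde G)};
\]
a direct computation (comparing $\lambda$ with $e^{2\alpha}/9$ via the substitution $u=e^{2\alpha}$) shows $1/\sqrt\lambda\le 3e^{-\alpha}$ for $\alpha>\ln 12$, completing the bound. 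The principal obstacle is exactly the conceptual step in the middle paragraph: the H\"older estimate a priori lives at $(\beta^*,\gamma^*)\ne(2\alpha,0)$, and finding the clever pairing of opposite left-shifts with opposite symmetric $r$-values so that the two H\"older estimates meet at a single common point is what enables the triangle inequality to yield a bound at $(2\alpha,0)$ exactly; once this structural observation is in place the rest is routine bookkeeping of constants.
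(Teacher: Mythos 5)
Your proposal is correct and follows essentially the same route as the paper: reduce to a symmetric pair of $t$-values, produce two elements of the form $\widetilde{D}(\alpha,\alpha)\widetilde{v}^{\pm 1}\widetilde{h}(r)\widetilde{D}(\alpha,\alpha)$ landing in the same $(\beta,\gamma)$-class, apply Proposition \ref{prp:chialpha} together with the H\"older-$\frac12$ bound from the Legendre polynomials, and bridge the two estimates at a common intermediate value via the shift invariance of Lemma \ref{lem:cinvariance}, with the trivial bound $2\|\varphi\|_{M_0A(\widetilde{G})}\le 24e^{-\alpha}$ disposing of small $\alpha$. The only (cosmetic) difference is that you exploit the $r\leftrightarrow -r$ symmetry of Lemma \ref{lem:circleseqs} with $\widetilde{v}$ alone and pre-shifted multipliers $\varphi^{\pm}$, whereas the paper keeps $r$ fixed and uses both $\chi_{\alpha}^{\prime}$ and $\chi_{\alpha}^{\prime\prime}$ (i.e.\ $\widetilde{v}$ and $\widetilde{v}^{-1}$), which flips the sign of $t$; the two bookkeeping schemes yield the same estimates.
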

In order to prove this result, we need the following lemma.
\begin{lem} \label{lem:tdependence}
  Let $\varphi \in M_0A(\widetilde{G}) \cap \mathcal{C}$, let $\alpha \geq 2$ and $\tau \in [-\frac{\pi}{4},\frac{\pi}{4}]$. Let $r=-\frac{2\cosh(2\alpha)}{\sinh^2(2\alpha)}\tan(\tau)$, and let $\beta \geq \gamma \geq 0$ be the unique numbers for which
\begin{equation} \nonumber
\begin{split}
  \sinh \beta &= \frac{1}{2}\sinh(2\alpha)(\sqrt{1+|r|}+\sqrt{1-|r|}),\\
  \sinh \gamma &= \frac{1}{2}\sinh(2\alpha)(\sqrt{1+|r|}-\sqrt{1-|r|}).
\end{split}
\end{equation}
Then
\[
  |\dot{\varphi}(\beta,\gamma,\tau)-\dot{\varphi}(2\alpha,0,0)| \leq 12e^{-\alpha}\|\varphi\|_{M_0A(\widetilde{G})}.
\]
\end{lem}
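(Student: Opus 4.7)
The plan is to reduce this to Hölder continuity of $(\chi'_\alpha)^0$ along the curve provided by Lemma \ref{lem:circleseqs}, after first absorbing the awkward shift in the $t$-parameter by a translation of $\varphi$ under the one-parameter family $\widetilde{v}_t$. Concretely, I would introduce the auxiliary multiplier $\tilde\varphi := \varphi_{-\pi/4}$, which by Lemma \ref{lem:cinvariance} lies in $\mathcal{C}$ and satisfies $\|\tilde\varphi\|_{M_0A(\widetilde{G})}=\|\varphi\|_{M_0A(\widetilde{G})}$. A direct unwinding of Notation \ref{ntn:notation} gives $\dot{\tilde\varphi}(\beta,\gamma,t)=\dot\varphi(\beta,\gamma,t-\tfrac{\pi}{2})$. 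This is chosen precisely so that the $t$-component $\tfrac{\pi}{2}-\tan^{-1}(\tfrac{\sinh^2(2\alpha)}{2\cosh(2\alpha)}r)$ appearing in Lemma \ref{lem:circleseqs} gets shifted back to $-\tan^{-1}(\tfrac{\sinh^2(2\alpha)}{2\cosh(2\alpha)}r)$, vanishing at $r=0$.

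Now apply Proposition \ref{prp:chialpha} to $\tilde\varphi$ to obtain $\chi'_\alpha(h)=\tilde\varphi(\widetilde{D}(\alpha,\alpha)\widetilde{v}\widetilde{h}\widetilde{D}(\alpha,\alpha))$ with $\|\chi'_\alpha\|_{M_0A(H)}\leq \|\varphi\|_{M_0A(\widetilde{G})}$, and invoke the Legendre-polynomial decomposition lemma to write $\chi'_\alpha(h)=(\chi'_\alpha)^0(r)$ with $r=a^2-b^2+c^2-d^2$, and to obtain the Hölder bound $|(\chi'_\alpha)^0(r_1)-(\chi'_\alpha)^0(r_2)|\leq 4|r_1-r_2|^{1/2}\|\varphi\|_{M_0A(\widetilde{G})}$ for $r_1,r_2\in[-\tfrac12,\tfrac12]$. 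Matching with Lemma \ref{lem:circleseqs} and the $\dot{\tilde\varphi}\leftrightarrow \dot\varphi$ shift identified above, one checks
\[
  (\chi'_\alpha)^0(0)=\dot{\tilde\varphi}(2\alpha,0,\tfrac{\pi}{2})=\dot\varphi(2\alpha,0,0),\qquad (\chi'_\alpha)^0(r_0)=\dot{\tilde\varphi}(\beta,\gamma,\tfrac{\pi}{2}+\tau)=\dot\varphi(\beta,\gamma,\tau),
\]
where the specific value $r_0=-\tfrac{2\cosh(2\alpha)}{\sinh^2(2\alpha)}\tan\tau$ is chosen exactly so that $\tan^{-1}(\tfrac{\sinh^2(2\alpha)}{2\cosh(2\alpha)}r_0)=-\tau$ for $\tau\in[-\tfrac{\pi}{4},\tfrac{\pi}{4}]$, while the first two equations of Lemma \ref{lem:circleseqs} are precisely the definitions of $\beta,\gamma$ in terms of $r_0$.

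It then remains to estimate $|r_0|$ for $\alpha\geq 2$. Writing $\tfrac{2\cosh(2\alpha)}{\sinh^2(2\alpha)}=\tfrac{2}{\sinh(2\alpha)\tanh(2\alpha)}$ and using $\tanh(2\alpha)\geq\tanh 4$ together with $\sinh(2\alpha)\geq\tfrac{1}{4}e^{2\alpha}$, one gets $|r_0|\leq 9 e^{-2\alpha}\cdot|\tan\tau|\leq 9e^{-2\alpha}\leq \tfrac12$, so the Hölder estimate applies and yields $|(\chi'_\alpha)^0(r_0)-(\chi'_\alpha)^0(0)|\leq 4\cdot 3 e^{-\alpha}\|\varphi\|_{M_0A(\widetilde{G})}=12e^{-\alpha}\|\varphi\|_{M_0A(\widetilde{G})}$. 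The only genuine subtlety I expect is the bookkeeping in paragraph two: one must verify that the value of the integer $k$ in the $\arg$-computation inside Lemma \ref{lem:circleseqs} (already pinned down there by a continuity argument to $k=0$) remains compatible after the $\widetilde{v}_{-\pi/4}$-shift, so that the identifications of $(\chi'_\alpha)^0(0)$ and $(\chi'_\alpha)^0(r_0)$ with the correct values of $\dot\varphi$ on the right equivalence classes $S_{\beta,\gamma,t}$ go through; everything else is routine hyperbolic arithmetic.
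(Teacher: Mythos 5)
Your proposal is correct and takes essentially the same route as the paper: identify $\widetilde{D}(\alpha,\alpha)\widetilde{v}\widetilde{h}(r)\widetilde{D}(\alpha,\alpha)$ with the class $S_{\beta,\gamma,\frac{\pi}{2}+\tau}$ via Lemma \ref{lem:circleseqs}, apply the H\"older estimate for $\chi_{\alpha}^{\prime}$ coming from Proposition \ref{prp:chialpha} and the Legendre decomposition, and bound $|r|$ by a multiple of $e^{-2\alpha}$ for $\alpha \geq 2$ (the paper gets $5e^{-2\alpha}$; your looser $9e^{-2\alpha}$ still yields the constant $12$). The only cosmetic difference is that you absorb the $\frac{\pi}{2}$-shift by pre-translating $\varphi$ to $\varphi_{-\pi/4}$, whereas the paper invokes the $\widetilde{v}_t$-invariance of Lemma \ref{lem:cinvariance} at the end --- the same mechanism --- and your worry about the integer $k$ is unfounded, since Lemma \ref{lem:circleseqs} is a statement about equivalence classes independent of $\varphi$.
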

\begin{proof}
One easily checks that $\sinh^2 \beta + \sinh^2 \gamma=\sinh^2(2\alpha)$ and $2\sinh\beta\sinh\gamma=\sinh^2(2\alpha)|r|$. Put
\[
g(r)=\widetilde{D}(\alpha,\alpha)\widetilde{v}\widetilde{h}(r)\widetilde{D}(\alpha,\alpha) \in S_{\beta,\gamma,\tau^{\prime}},
\]
where
\[
  \tau^{\prime}=\frac{\pi}{2}-\tan^{-1} \left( \frac{\sinh^2(2\alpha)}{2\cosh(2\alpha)}r \right)=\frac{\pi}{2}+\tau
\]
and $\widetilde{h}(r)$ is any element in $\widetilde{H}$ satisfying $a^2-b^2+c^2-d^2=r$. By Proposition \ref{prp:chialpha}, we obtain
\[
  |\varphi(g(r))-\varphi(g(0))| \leq 4 |r|^{\frac{1}{2}} \|\varphi\|_{M_0A(\widetilde{G})},
\]
provided that $|r| \leq \frac{1}{2}$. Since $g(0)$ corresponds to $r=0$, it follows that the corresponding $\tau^{\prime}$ equals $\frac{\pi}{2}$. Hence, $g(0) \in S_{2\alpha,0,\frac{\pi}{2}}$.

Hence, by the invariance property of $\mathcal{C}$ of Lemma \ref{lem:cinvariance},
\[
  |\dot{\varphi}(\beta,\gamma,\tau)-\dot{\varphi}(2\alpha,0,0)| \leq 4|r|^{\frac{1}{2}}\|\varphi\|_{M_0A(\widetilde{G})},
\]
provided that $|r| \leq \frac{1}{2}$. However, since $|\tau| \leq \frac{\pi}{4}$, we have $|\tan{\tau}| \leq 1$. It follows that $|r| \leq \frac{2\cosh(2\alpha)}{\sinh^2(2\alpha)} \leq \frac{4e^{2\alpha}(1+e^{-4\alpha})}{e^{4\alpha}(1-e^{-4\alpha})^2} \leq 4e^{-2\alpha}\left(\frac{1+e^{-8}}{(1-e^{-8})^2}\right) \leq 5e^{-2\alpha}$ for $\alpha \geq 2$. Then $|r| \leq 5e^{-4} < \frac{1}{2}$. This implies that
\[
  |\dot{\varphi}(\beta,\gamma,\tau)-\dot{\varphi}(2\alpha,0,0)| \leq 12e^{-\alpha}\|\varphi\|_{M_0A(\widetilde{G})}.
\]
\end{proof}
\begin{proof}[Proof of Proposition \ref{prp:tdependence}.]
  Put $\tau=\frac{\tau_1-\tau_2}{2}$. It is sufficient to prove that
\[
  |\dot{\varphi}(2\alpha,0,\tau)-\dot{\varphi}(2\alpha,0,-\tau)| \leq 24e^{-\alpha}\|\varphi\|_{M_0A(\widetilde{G})}.
\]
Construct $\beta \geq \gamma \geq 0$ as in Lemma \ref{lem:tdependence}. Observe that this gives the same for $\tau$ and $-\tau$. Replacing $g(r)=\widetilde{D}(\alpha,\alpha)\widetilde{v}\widetilde{h}(r)\widetilde{D}(\alpha,\alpha)$ in that lemma by $g(r)=\widetilde{D}(\alpha,\alpha)\widetilde{v}^{-1}\widetilde{h}(r)\widetilde{D}(\alpha,\alpha)$, we obtain
\[
  |\dot{\varphi}(\beta,\gamma,-\tau)-\dot{\varphi}(2\alpha,0,0)| \leq 12e^{-\alpha}\|\varphi\|_{M_0A(\widetilde{G})}
\]
for $\alpha \geq 2$. Combining the results, we obtain
\[
  |\dot{\varphi}(\beta,\gamma,\pm\tau)-\dot{\varphi}(2\alpha,0,0)| \leq 12e^{-\alpha}\|\varphi\|_{M_0A(\widetilde{G})}.
\]
Then the invariance property of $\mathcal{C}$ (see Lemma \ref{lem:cinvariance}) implies that
\[
  |\dot{\varphi}(\beta,\gamma,0)-\dot{\varphi}(2\alpha,0,\mp\tau)| \leq 12e^{-\alpha}\|\varphi\|_{M_0A(\widetilde{G})}
\]
for $\alpha \geq 2$. Since $2e^2 \leq 24$, it follows that the desired estimate holds for every $\alpha > 0$.
\end{proof}
\begin{lem} \label{lem:betagamma}
  Let $\beta \geq \gamma \geq 0$. Then the equations
\begin{equation} \nonumber
\begin{split}
    \sinh^2(2s_1) + \sinh^2s_1 &= \sinh^2 \beta + \sinh^2\gamma, \\
    \sinh(2s_2)\sinh s_2 &= \sinh \beta \sinh \gamma
\end{split}
\end{equation}
have unique solutions $s_1=s_1(\beta,\gamma)$, $s_2=s_2(\beta,\gamma)$ in the interval $[0,\infty)$. Moreover,
\begin{equation} \label{eq:betagamma1}
   s_1 \geq \frac{\beta}{4}, \qquad s_2 \geq \frac{\gamma}{2}.
\end{equation}
\end{lem}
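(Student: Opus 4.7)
The plan is to separate the statement into two independent problems: existence and uniqueness of $s_1,s_2$, and then the lower bounds $s_1 \geq \beta/4$ and $s_2 \geq \gamma/2$.

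For existence and uniqueness, I would observe that the maps $f_1(s) = \sinh^2(2s) + \sinh^2 s$ and $f_2(s) = \sinh(2s)\sinh s$ are continuous, strictly increasing on $[0,\infty)$, vanish at $s=0$, and tend to $+\infty$ as $s\to\infty$. Hence both are homeomorphisms of $[0,\infty)$ onto itself, and since the right-hand sides $\sinh^2\beta + \sinh^2\gamma$ and $\sinh\beta\sinh\gamma$ are non-negative, $s_1$ and $s_2$ exist and are unique.

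For the bound $s_1 \geq \beta/4$, the key manoeuvre is to rewrite everything in terms of $u = \sinh^2 s_1$ using the duplication identity $\sinh^2(2s) = 4\sinh^2 s\cosh^2 s = 4\sinh^2 s(1+\sinh^2 s)$, which turns the first equation into
\[
4u^2 + 5u = \sinh^2\beta + \sinh^2\gamma \geq \sinh^2\beta.
\]
Since the polynomial $4u^2+5u$ is strictly increasing in $u\geq 0$, it suffices to show $\sinh^2\beta \geq 4\sinh^4(\beta/4) + 5\sinh^2(\beta/4)$. Setting $t = \sinh(\beta/4)$ and iterating the duplication formula twice, $\sinh\beta = 4t\sqrt{1+t^2}(1+2t^2)$, so the claim reduces to the elementary polynomial inequality
\[
16(1+t^2)(1+2t^2)^2 \geq 4t^2 + 5, \qquad t \geq 0,
\]
which one verifies directly by expanding the left-hand side into $16 + 80t^2 + 128t^4 + 64t^6$.

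For $s_2 \geq \gamma/2$, the analogous substitution $v = \sinh s_2$ converts the second equation into $2v^2\sqrt{1+v^2} = \sinh\beta\sinh\gamma$, and since $\beta \geq \gamma$ the right-hand side is at least $\sinh^2\gamma = 4w^2(1+w^2)$, where $w = \sinh(\gamma/2)$. So it remains to check that $2v^2\sqrt{1+v^2} \geq 4w^2(1+w^2)$ implies $v \geq w$. I would do this by noting $4w^2(1+w^2) \geq 2w^2\sqrt{1+w^2}$ (equivalent to $2\sqrt{1+w^2}\geq 1$), combining gives $2v^2\sqrt{1+v^2} \geq 2w^2\sqrt{1+w^2}$, and invoking the strict monotonicity of $v \mapsto v^2\sqrt{1+v^2}$ on $[0,\infty)$.

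None of the steps looks like a genuine obstacle; the only mildly subtle point is confirming the polynomial inequality in the $s_1$-bound, which is what forces the constant $1/4$ rather than the more optimistic $1/2$ one gets asymptotically as $\beta \to \infty$. The $s_2$-bound is tight up to the asymptotic regime since $\sinh(2s)\sinh s \sim \frac{1}{4}e^{3s}$ while $\sinh\beta\sinh\gamma \sim \frac{1}{4}e^{\beta+\gamma}$ when $\beta = \gamma$ is large, giving $s_2 \sim 2\gamma/3$; the inequality $s_2 \geq \gamma/2$ is therefore a comfortable bound, and the derivation above produces it cleanly.
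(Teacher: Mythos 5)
Your proof is correct, and all the computations check out: the monotonicity argument gives existence and uniqueness; the substitution $u=\sinh^2 s_1$ turns the first equation into $4u^2+5u=\sinh^2\beta+\sinh^2\gamma\geq\sinh^2\beta$, and the double application of the duplication formula gives $\sinh^2\beta=16t^2(1+t^2)(1+2t^2)^2\geq t^2(4t^2+5)$ with $t=\sinh(\beta/4)$, so monotonicity of $x\mapsto 4x^2+5x$ yields $s_1\geq\beta/4$; similarly $2v^2\sqrt{1+v^2}\geq 4w^2(1+w^2)\geq 2w^2\sqrt{1+w^2}$ forces $v\geq w$ and hence $s_2\geq\gamma/2$. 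Note that the paper under review does not prove this lemma itself but simply refers to Lemma 3.16 of the companion paper (with a change of notation), so your self-contained argument is, if anything, a useful supplement; the spirit (reduce to monotone comparison of hyperbolic expressions) is the same as in that reference, and your concluding remarks about the asymptotics $s_1\sim\beta/2$ and $s_2\sim 2\gamma/3$ correctly explain why the constants $1/4$ and $1/2$ are comfortable rather than sharp.
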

For a proof, see \cite[Lemma 3.16]{haagerupdelaat1}. Note that we have changed notation here.
\begin{lem} \label{lem:tdependences}
  There exists a constant $\tilde{B} > 0$ such that for $\alpha > 0$, $t \in \mathbb{R}$, $\tau \in [-\frac{\pi}{2},\frac{\pi}{2}]$, $s_1=s_1(2\alpha,0)$ chosen as in Lemma \ref{lem:betagamma}, and $\varphi \in M_0A(\widetilde{G}) \cap \mathcal{C}$, we have
\[
  |\dot{\varphi}(2s_1,s_1,t) - \dot{\varphi}(2s_1,s_1,t+\tau)| \leq \tilde{B}e^{-\frac{\alpha}{4}}\|\varphi\|_{M_0A(\widetilde{G})}.
\]
\end{lem}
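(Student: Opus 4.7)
The plan is to transport the $t$-variation bound at $(2\alpha,0)$ provided by Proposition~\ref{prp:tdependence} over to $(2s_1,s_1)$, using the fact that both points lie on the common circle $\sinh^2\beta + \sinh^2\gamma = \sinh^2(2\alpha)$. The transport mechanism will be the H\"older estimate for the circle multiplier $\chi_\alpha'$ of Proposition~\ref{prp:chialpha} together with the shift invariance of Lemma~\ref{lem:cinvariance}.

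The key computation is the value of $r$ in Lemma~\ref{lem:circleseqs} at which the path $\widetilde{D}(\alpha,\alpha)\widetilde{v}\widetilde{h}(r)\widetilde{D}(\alpha,\alpha)$ passes through $(2s_1,s_1)$. I will show that this value is
\[
  r_0 := \frac{2\sinh(2s_1)\sinh s_1}{\sinh^2(2\alpha)} = \frac{4\cosh s_1}{4\cosh^2 s_1 + 1} \leq \frac{1}{\cosh s_1},
\]
the middle equality coming from $\sinh(2s_1)=2\sinh s_1\cosh s_1$ together with the defining identity $\sinh^2(2s_1)+\sinh^2 s_1 = \sinh^2(2\alpha)$ for $s_1 = s_1(2\alpha,0)$. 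Combined with $s_1 \geq \alpha/2$ from Lemma~\ref{lem:betagamma}, this gives $r_0 \leq 1/\cosh(\alpha/2) \leq 2e^{-\alpha/2}$; in particular $\sqrt{r_0} \leq \sqrt{2}\,e^{-\alpha/4}$, and $r_0 \in [-\tfrac{1}{2},\tfrac{1}{2}]$ once $\alpha$ is above some fixed absolute threshold. (For $\alpha$ below that threshold the lemma is trivial from $|\dot{\varphi}|\leq\|\varphi\|_{M_0A(\widetilde{G})}$ upon enlarging $\tilde{B}$.) In addition, Lemma~\ref{lem:circleseqs} places this path at $(2s_1,s_1,t_+)$ for $r=r_0$, with $t_+ = \tfrac{\pi}{2} - \tan^{-1}(\sinh(2s_1)\sinh s_1/\cosh(2\alpha))$, and at $(2\alpha,0,\pi/2)$ for $r=0$.

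Applying the H\"older estimate stated right after Proposition~\ref{prp:chialpha} to $(\chi_\alpha')^0$ between $r_0$ and $0$, together with the norm bound $\|\chi_\alpha'\|_{M_0A(H)} \leq \|\varphi\|_{M_0A(\widetilde{G})}$, I obtain
\[
  |\dot{\varphi}(2s_1,s_1,t_+) - \dot{\varphi}(2\alpha,0,\pi/2)| \leq 4\sqrt{r_0}\,\|\varphi\|_{M_0A(\widetilde{G})} \leq 4\sqrt{2}\,e^{-\alpha/4}\|\varphi\|_{M_0A(\widetilde{G})}.
\]
Replacing $\varphi$ by $\varphi_\sigma$ (Lemma~\ref{lem:cinvariance}) shifts both $t$-coordinates by $2\sigma$ without changing norms, so the same estimate holds after any common shift. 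Coupling this with Proposition~\ref{prp:tdependence} applied at $(2\alpha,0,\pi/2+2\sigma_1)$ and $(2\alpha,0,\pi/2+2\sigma_2)$ for $|\sigma_1-\sigma_2|\leq \pi/4$, and a triangle inequality, produces
\[
  |\dot{\varphi}(2s_1,s_1,t_++2\sigma_1) - \dot{\varphi}(2s_1,s_1,t_++2\sigma_2)| \leq \bigl(8\sqrt{2}\,e^{-\alpha/4} + 24e^{-\alpha}\bigr)\|\varphi\|_{M_0A(\widetilde{G})}.
\]
Since $\sigma$ ranges freely over $\bbR$, the pairs $(t_++2\sigma_1,t_++2\sigma_2)$ cover every $(t,t+\tau)$ with $|\tau|\leq \pi/2$, so this is the claimed bound with $\tilde{B} = 8\sqrt{2} + 24$ (enlarged to absorb the bounded range of $\alpha$ where $r_0 > \tfrac{1}{2}$).

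The only subtle point is the exponent: the H\"older estimate contributes $\sqrt{r_0}$, and combining it with the universal inequality $s_1\geq \alpha/2$ from Lemma~\ref{lem:betagamma} (rather than the sharper asymptotic $s_1\sim\alpha$, which would improve matters to $e^{-\alpha/2}$) is precisely what yields the exponent $e^{-\alpha/4}$; for a statement uniform in $\alpha>0$ this is the best exponent this argument produces.
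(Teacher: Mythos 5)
Your proposal is correct and follows essentially the same route as the paper: travel along the circle $\widetilde{D}(\alpha,\alpha)\widetilde{v}\widetilde{h}(r)\widetilde{D}(\alpha,\alpha)$ from $r=0$ (landing at $(2\alpha,0)$) to $r=r_0$ (landing at $(2s_1,s_1)$), bound $r_0\leq 1/\cosh s_1\leq 2e^{-\alpha/2}$ via $s_1\geq\alpha/2$, apply the Legendre--H\"older estimate through $\chi_{\alpha}'$, and close with Proposition \ref{prp:tdependence}, the $\widetilde{v}_\sigma$-shift invariance, and a triangle inequality, arriving at the same constant $\tilde{B}=8\sqrt{2}+24$. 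The only differences are cosmetic (parametrizing by $r$ directly instead of writing out $h_1,h_2$, and making the small-$\alpha$ case explicit, which the paper leaves implicit).
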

\begin{proof}
  Let $t \in \mathbb{R}$, and let $\tau \in [-\frac{\pi}{2},\frac{\pi}{2}]$. Suppose first that $\alpha \geq 4$, and let $h_1 \in H$ be such that
\[
  \iota(h_1)=\frac{1}{\sqrt{2}}\left(\begin{array}{cc} 1+i & 0 \\ 0 & 1-i \end{array}\right) \in \SU(2),
\]
i.e., in the parametrization of \eqref{eq:hform}, we have $a=b=\frac{1}{\sqrt{2}}$, $c=d=0$, and, hence, $r_1=0$. By Lemma \ref{lem:circleseqs}, we have $\widetilde{D}(\alpha,\alpha)\widetilde{v}\widetilde{h}_1\widetilde{D}(\alpha,\alpha) \in S_{2\alpha,0,t^{\prime\prime}}$ for some $t^{\prime\prime} \in \mathbb{R}$. Let $s_1=s_1(2\alpha,0)$ be as in Lemma \ref{lem:betagamma}. Then $s_1 \geq 0$ and $\sinh^2(2s_1)+\sinh^2s_1=\sinh^2(2\alpha)$. Put
\[
  r_2=\frac{2\sinh (2s_1) \sinh s_1}{\sinh^2(2s_1)+\sinh^2s_1} \in [0,1],
\]
and let $h_2 \in H$ be such that
\[
  \iota(h_2)=\left(\begin{array}{cc} a_2+ib_2 & 0 \\ 0 & a_2-ib_2 \end{array}\right) \in \SU(2),
\]
where $a_2=\left(\frac{1+r_2}{2}\right)^{\frac{1}{2}}$ and $b_2=\left(\frac{1-r_2}{2}\right)^{\frac{1}{2}}$. Since $a_2^2-b_2^2=r_2$, it follows again by Lemma \ref{lem:circleseqs} that $\widetilde{D}(\alpha,\alpha)\widetilde{v}\widetilde{h}_2\widetilde{D}(\alpha,\alpha) \in S_{2s_1,s_1,t^{\prime}}$ for some $t^{\prime} \in \mathbb{R}$.

Let $\varphi \in M_0A(\widetilde{G}) \cap \mathcal{C}$, and let $\chi_{\alpha}^{\prime}(h)=\varphi(\widetilde{D}(\alpha,\alpha)\widetilde{v}\widetilde{h}\widetilde{D}(\alpha,\alpha))$ for $h \in H$ as in Proposition \ref{prp:chialpha}. By the same proposition, given the fact that $r_1=0$ and provided that $r_2 \leq \frac{1}{2}$, it follows that
\begin{equation} \label{eq:chialpha0estimate}
\begin{split}
  |\dot{\varphi}(2s_1,s_1,t^{\prime})-\dot{\varphi}(2\alpha,0,t^{\prime\prime})| & \leq |\chi_{\alpha}^{\prime}(h_1)-\chi_{\alpha}^{\prime}(h_2)| \\
    &=|\chi_{\alpha}^{\prime,0}(r_1)-\chi_{\alpha}^{\prime,0}(r_2)| \\
    & \leq 4r_2^{\frac{1}{2}}\|\varphi\|_{M_0A(\widetilde{G})},
\end{split}
\end{equation}
where $\chi_{\alpha}^{\prime,0}$ is the function on $[-1,1]$ induced by $\chi_{\alpha}^{\prime}$. Note that $r_2 \leq 2\frac{\sinh s_1}{\sinh{2s_1}}=\frac{1}{\cosh s_1}\leq 2e^{-s_1}$. By Lemma \ref{lem:betagamma}, equation \eqref{eq:betagamma1}, we obtain that $r_2 \leq 2e^{-\frac{\alpha}{2}}\leq 2e^{-2} \leq \frac{1}{2}$. In particular, \eqref{eq:chialpha0estimate} holds, and we have $r_2 \leq 2e^{-\frac{\alpha}{2}}$. The estimate above is independent on the choice of $\varphi \in M_0A(\widetilde{G}) \cap \mathcal{C}$, so by the invariance property of Lemma \ref{lem:cinvariance}, it follows that
\begin{equation} \nonumber
\begin{split}
  & |\dot{\varphi}(2s_1,s_1,t^{\prime})-\dot{\varphi}(2s_1,s_1,t^{\prime}+\tau)| \\
  & \leq |\dot{\varphi}(2s_1,s_1,t^{\prime})-\dot{\varphi}(2\alpha,0,t^{\prime\prime})| + |\dot{\varphi}(2\alpha,0,t^{\prime\prime})-\dot{\varphi}(2\alpha,0,t^{\prime\prime}+\tau)| \\
  & \qquad + |\dot{\varphi}(2\alpha,0,t^{\prime\prime}+\tau)-\dot{\varphi}(2s_1,s_1,t^{\prime}+\tau)| \\
  & \leq (8\sqrt{2}e^{-\frac{\alpha}{4}} + 24e^{-\alpha})\|\varphi\|_{M_0A(\widetilde{G})}.
\end{split}
\end{equation}
By the invariance property of Lemma \ref{lem:cinvariance}, the desired estimate follows with $\tilde{B}=8\sqrt{2}+24$.
\end{proof}
By the following two lemmas, we can estimate the difference between $\dot{\varphi}(\beta,\gamma,t)$ and the value of $\varphi$ at a certain point on the line $\{(2s,s,t) \mid s \in \mathbb{R}_{+}\}$. The method is similar to the one used in \cite[Lemma 3.17 and Lemma 3.18]{haagerupdelaat1}, but because of the $t$-dependence, there is an extra parameter. Lemma \ref{lem:tdependences} provides us with the tools to deal with this extra parameter.
\begin{lem} \label{lem:betagammacir}
  There exists a constant $B_1 > 0$ such that whenever $\beta \geq \gamma \geq 0$, $t \in \mathbb{R}$, and $s_1=s_1(\beta,\gamma)$ is chosen as in Lemma \ref{lem:betagamma}, then for all $\varphi \in M_0A(\widetilde{G}) \cap \mathcal{C}$,
\begin{equation} \nonumber
  |\dot{\varphi}(\beta,\gamma,t)-\dot{\varphi}(2s_1,s_1,t)| \leq B_1 e^{-\frac{\beta-\gamma}{8}} \|\varphi\|_{M_0A(\widetilde{G})}.
\end{equation}
\end{lem}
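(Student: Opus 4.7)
The plan is to bring $\widetilde{D}(\beta,\gamma)$ and $\widetilde{D}(2s_1,s_1)$ onto a common circle-family orbit using Lemma \ref{lem:circleseqs}, apply the H\"older estimate coming from Proposition \ref{prp:chialpha} and Lemma \ref{lem:legendreestimates}, and absorb the resulting $t$-discrepancy using Lemma \ref{lem:tdependences} and the $\widetilde{v}_s$-invariance of Lemma \ref{lem:cinvariance}. The crucial observation is that, by the very definition of $s_1=s_1(\beta,\gamma)$, the quantity $\sinh^2\beta+\sinh^2\gamma$ coincides with $\sinh^2(2s_1)+\sinh^2 s_1$. Choosing $\alpha\geq 0$ with $\sinh^2(2\alpha)$ equal to this common value, Lemma \ref{lem:circleseqs} places both $(\beta,\gamma)$ and $(2s_1,s_1)$ on the single curve $r\mapsto\widetilde{D}(\alpha,\alpha)\widetilde{v}\widetilde{h}(r)\widetilde{D}(\alpha,\alpha)$, at
\[
r_1=\frac{2\sinh\beta\sinh\gamma}{\sinh^2(2\alpha)},\qquad r_2=\frac{2\sinh(2s_1)\sinh s_1}{\sinh^2(2\alpha)},
\]
with corresponding $t$-parameters $t_1',t_2'\in(0,\pi/2]$ determined by the third equation of Lemma \ref{lem:circleseqs}.

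Restricting to the regime where $\beta-\gamma$ is large enough that $r_1,r_2\in[0,1/2]$ (the complementary regime of bounded $\beta-\gamma$ being trivial from the uniform bound $|\dot{\varphi}|\leq\|\varphi\|_{M_0A(\widetilde{G})}$), Proposition \ref{prp:chialpha} together with Lemma \ref{lem:legendreestimates} yields
\[
|\dot{\varphi}(\beta,\gamma,t_1')-\dot{\varphi}(2s_1,s_1,t_2')|\leq 4|r_1-r_2|^{1/2}\|\varphi\|_{M_0A(\widetilde{G})}.
\]
A short asymptotic computation, using that $\sinh^2(2s_1)+\sinh^2 s_1=\sinh^2\beta+\sinh^2\gamma$ forces $2s_1\approx\beta$ in the relevant regime, gives $|r_1-r_2|\leq C_0 e^{-(\beta-\gamma)/2}$, so the H\"older estimate yields a bound of order $e^{-(\beta-\gamma)/4}$, even stronger than the claimed $e^{-(\beta-\gamma)/8}$.

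It remains to pass from the auxiliary $t$-parameters $t_1',t_2'$ to the common value $t$. Since the chosen $\alpha$ satisfies $s_1=s_1(2\alpha,0)$, Lemma \ref{lem:tdependences} applies at $(2s_1,s_1,\cdot)$ and gives $|\dot{\varphi}(2s_1,s_1,t_2')-\dot{\varphi}(2s_1,s_1,t_1')|\leq\tilde{B}e^{-\alpha/4}\|\varphi\|_{M_0A(\widetilde{G})}$, absorbing the gap $|t_1'-t_2'|\leq\pi/2$; the estimate $\alpha\geq(\beta-\gamma)/2-O(1)$, which follows from $\sinh^2(2\alpha)\geq\sinh^2\beta$, ensures $e^{-\alpha/4}\leq C_1 e^{-(\beta-\gamma)/8}$. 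This proves the bound at the specific value $t=t_1'(\beta,\gamma)$, and by the $\widetilde{v}_s$-invariance of Lemma \ref{lem:cinvariance}, applied to $\varphi_{(t-t_1')/2}$ (which preserves the $M_0A(\widetilde{G})$-norm and the class $\mathcal{C}$ and satisfies $\dot{\varphi_{(t-t_1')/2}}(\beta',\gamma',t_1')=\dot{\varphi}(\beta',\gamma',t)$ for any $(\beta',\gamma')$), the estimate extends to arbitrary $t\in\mathbb{R}$. The main technical obstacle will be the asymptotic estimate $|r_1-r_2|\leq C_0 e^{-(\beta-\gamma)/2}$, which requires careful manipulation of the defining relation for $s_1$; together with the threshold analysis identifying when $r_1,r_2\in[0,1/2]$, this is essentially the heart of the proof.
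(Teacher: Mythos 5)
Your proposal follows essentially the same route as the paper: the same auxiliary $\alpha$ with $\sinh^2(2\alpha)=\sinh^2\beta+\sinh^2\gamma$, the same curve $r\mapsto\widetilde{D}(\alpha,\alpha)\widetilde{v}\widetilde{h}(r)\widetilde{D}(\alpha,\alpha)$ via Lemma \ref{lem:circleseqs}, the Legendre--H\"older bound from Proposition \ref{prp:chialpha}, Lemma \ref{lem:tdependences} for the gap $|t_1'-t_2'|\le\frac{\pi}{2}$, and the $\widetilde{v}_s$-shift of Lemma \ref{lem:cinvariance} to reach arbitrary $t$. The one step you flag as the main remaining obstacle is dispatched much more cheaply in the paper: instead of the sharp asymptotic $|r_1-r_2|\le C_0e^{-(\beta-\gamma)/2}$, it simply uses $|r_1-r_2|\le\max\{r_1,r_2\}$ together with $r_1\le 2e^{\gamma-\beta}$ and $r_2\le 2e^{-s_1}\le 2e^{-\beta/4}\le 2e^{-(\beta-\gamma)/4}$ (via $s_1\ge\beta/4$ from Lemma \ref{lem:betagamma}), which already yields the stated exponent $\frac{1}{8}$ after taking the square root.
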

\begin{proof}
Let $\beta \geq \gamma \geq 0$ and $t \in \mathbb{R}$. Assume first that $\beta - \gamma \geq 8$. Let $\alpha \in [0,\infty)$ be the unique solution to $\sinh^2 \beta + \sinh^2 \gamma=\sinh^2(2\alpha)$, and observe that $2\alpha \geq \beta \geq 2$, so in particular $\alpha > 0$. Define
\[
  r_1=\frac{2\sinh \beta \sinh \gamma}{\sinh^2 \beta+\sinh^2 \gamma} \in [0,1],
\]
and $a_1=\left(\frac{1+r_1}{2}\right)^{\frac{1}{2}}$ and $b_1=\left(\frac{1-r_1}{2}\right)^{\frac{1}{2}}$. Furthermore, let $h_1 \in H$ be such that
\[
  \iota(h_1)=\left(\begin{array}{cc} a_1+ib_1 & 0 \\ 0 & a_1-ib_1 \end{array}\right) \in \SU(2),
\]
and let $\widetilde{v}$ be as before. We now have $2\sinh \beta \sinh \gamma=\sinh^2(2\alpha)r_1$, and $a_1^2-b_1^2=r_1$, so by Lemma \ref{lem:circleseqs}, we have $\widetilde{D}(\alpha,\alpha)\widetilde{v}\widetilde{h}_1\widetilde{D}(\alpha,\alpha) \in S_{\beta,\gamma,t^{\prime}}$ for some $t^{\prime} \in \mathbb{R}$.

Let now $s_1=s_1(\beta,\gamma)$ be as in Lemma \ref{lem:betagamma}. Then $s_1 \geq 0$ and $\sinh^2(2s_1)+\sinh^2s_1=\sinh^2\beta+\sinh^2\gamma=\sinh^2(2\alpha)$. Similar to the proof of Lemma \ref{lem:tdependences}, put
\[
  r_2=\frac{2\sinh (2s_1) \sinh s_1}{\sinh^2(2s_1)+\sinh^2s_1} \in [0,1]
\]
and let $h_2 \in H$ be such that
\[
  \iota(h_2)=\left(\begin{array}{cc} a_2+ib_2 & 0 \\ 0 & a_2-ib_2 \end{array}\right) \in \SU(2),
\]
where $a_2=\left(\frac{1+r_2}{2}\right)^{\frac{1}{2}}$ and $b_2=\left(\frac{1-r_2}{2}\right)^{\frac{1}{2}}$. Since $a_2^2-b_2^2=r_2$, it follows again by Lemma \ref{lem:circleseqs} that $\widetilde{D}(\alpha,\alpha)\widetilde{v}\widetilde{h}_2\widetilde{D}(\alpha,\alpha) \in S_{2s_2,s_2,t^{\prime\prime}}$ for some $t^{\prime\prime} \in \mathbb{R}$.

Now, let $\chi_{\alpha}^{\prime}(h)=\varphi(\widetilde{D}(\alpha,\alpha)\widetilde{v}\widetilde{h}\widetilde{D}(\alpha,\alpha))$ for $h \in H$ as in Proposition \ref{prp:chialpha}. By the same proposition, it follows that
\begin{equation} \label{eq:chialphaestimate}
  |\chi_{\alpha}^{\prime}(h_1)-\chi_{\alpha}^{\prime}(h_2)|=|\chi_{\alpha}^{\prime,0}(r_1)-\chi_{\alpha}^{\prime,0}(r_2)|\leq4|r_1-r_2|^{\frac{1}{2}}\|\varphi\|_{M_0A(\widetilde{G})},
\end{equation}
provided that $r_1,r_2 \leq \frac{1}{2}$. Note that $r_1 \leq \frac{2\sinh\beta\sinh\gamma}{\sinh^2\beta}=2\frac{\sinh\gamma}{\sinh\beta}$. Hence, using $\beta\geq\gamma+8\geq\gamma$, we get $r_1 \leq 2\frac{e^{\gamma}(1-e^{2\gamma})}{e^{\beta}(1-e^{2\beta})}\leq 2e^{\gamma-\beta}$. In particular, $r_1 \leq 2e^{-8} \leq \frac{1}{2}$. Similarly, $r_2 \leq 2\frac{\sinh s_1}{\sinh{2s_1}}=\frac{1}{\cosh s_1}\leq 2e^{-s_1}$. By Lemma \ref{lem:betagamma}, equation \eqref{eq:betagamma1}, we obtain that $r_2 \leq 2e^{-\frac{\beta}{4}}\leq 2e^{\frac{\gamma-\beta}{4}} \leq 2e^{-2} \leq \frac{1}{2}$. In particular, \eqref{eq:chialphaestimate} holds. Moreover, $|r_1-r_2| \leq \max\{r_1,r_2\} \leq 2e^{\frac{\gamma-\beta}{4}}$.

Because of the explicit form of $t^{\prime}$ and $t^{\prime\prime}$, we have $|t^{\prime}-t^{\prime\prime}| \leq \frac{\pi}{2}$. It follows that
\begin{equation} \nonumber
  |\dot{\varphi}(\beta,\gamma,t^{\prime})-\dot{\varphi}(2s_2,s_2,t^{\prime})| \leq |\dot{\varphi}(\beta,\gamma,t^{\prime})-\dot{\varphi}(2s_1,s_1,t^{\prime\prime})| + |\dot{\varphi}(2s_1,s_1,t^{\prime})-\dot{\varphi}(2s_1,s_1,t^{\prime\prime})|.
\end{equation}
The first summand is estimated by $4\sqrt{2}e^{-\frac{\beta-\gamma}{8}}\|\varphi\|_{M_0A(\widetilde{G})}$ by \eqref{eq:chialphaestimate}, and the second summand is estimated by $\tilde{B}e^{-\frac{\alpha}{4}}\|\varphi\|_{M_0A(\widetilde{G})}$ by Lemma \ref{lem:tdependences}. It follows that
\begin{equation} \nonumber
\begin{split}
  |\dot{\varphi}(\beta,\gamma,t^{\prime})-\dot{\varphi}(2s_2,s_2,t^{\prime})| &\leq 4\sqrt{2}e^{-\frac{\beta-\gamma}{8}}\|\varphi\|_{M_0A(\widetilde{G})} + \tilde{B}e^{-\frac{\alpha}{4}}\|\varphi\|_{M_0A(\widetilde{G})} \\
    &\leq (4\sqrt{2}+\tilde{B})e^{-\frac{\beta-\gamma}{8}}\|\varphi\|_{M_0A(\widetilde{G})}
\end{split}
\end{equation}
under the assumption that $\beta\geq\gamma+8$. By shifting over $t-t^{\prime}$ (cf.~Lemma \ref{lem:cinvariance}), we obtain the estimate of the lemma for $\beta \geq \gamma + 8$. In general, the assertion of the lemma follows with $B_1=\max\{4\sqrt{2}+\tilde{B},2e^2\}=4\sqrt{2}+\tilde{B}$.
\end{proof}
\begin{lem} \label{lem:betagammahyp}
  There exists a constant $B_2 > 0$ such that whenever $\beta \geq \gamma \geq 0$, $t \in \bbR$, and $s_2=s_2(\beta,\gamma)$ is chosen as in Lemma \ref{lem:betagamma}, then for all $\varphi \in M_0A(\widetilde{G}) \cap \mathcal{C}$,
\begin{equation} \nonumber
  |\dot{\varphi}(\beta,\gamma,t)-\dot{\varphi}(2s_2,s_2,t)| \leq B_2e^{-\frac{\gamma}{8}}\|\varphi\|_{M_0A(\widetilde{G})}.
\end{equation}
\end{lem}
\begin{proof}
Let $\beta \geq \gamma \geq 0$ and $t \in \mathbb{R}$. Assume first that $\gamma \geq 2$, and let $\alpha \in [0,\infty)$ be the unique solution in $[0,\infty)$ to the equation $\sinh\beta\sinh\gamma=\frac{1}{2}\sinh^2\alpha$, and observe that $\alpha>0$, because $\beta \geq \gamma \geq 2$. Put
\[
  a_1=\frac{\sinh\beta-\sinh\gamma}{\sinh (2\alpha)} \geq 0.
\]
Since $\sinh (2\alpha)=2\sinh \alpha \cosh \alpha \geq 2\sinh^2 \alpha$, we have
\[
  a_1 \leq \frac{\sinh \beta}{\sinh(2\alpha)} \leq \frac{\sinh \beta}{2\sinh^2\alpha}=\frac{1}{4\sinh\gamma}.
\]
In particular, $a_1 \leq \frac{1}{4\gamma} \leq \frac{1}{8}$. Put now $b_1=\sqrt{\frac{1}{2}-a_1^2}$. Then $1-a_1^2-b_1^2=\frac{1}{2}$. Hence, we have $\sinh \beta - \sinh \gamma=\sinh(2\alpha)a_1$. Let $h_1 \in H$ be such that
\[
  \iota(h_1)=\left(\begin{array}{cc} a_1+ib_1 & -\frac{1}{\sqrt{2}} \\ \frac{1}{\sqrt{2}} & a_1-ib_1 \end{array}\right) \in \SU(2).
\]
By Lemma \ref{lem:hyperbolaseqs}, we have $\widetilde{D}(\alpha,0)\widetilde{h_1}\widetilde{D}(\alpha,0) \in S_{\beta,\gamma,t^{\prime}}$, where $t^{\prime}$ is determined by the equations in that lemma. By Lemma \ref{lem:betagamma}, we have $\sinh(2s_2)\sinh s_2=\sinh \beta \sinh \gamma = \frac{1}{2}\sinh^2 \alpha$. Moreover, by \eqref{eq:betagamma1}, we have $s_2 \geq \frac{\gamma}{2} \geq 1$. By replacing $(\beta,\gamma)$ in the above computation with $(2s_2,s_2)$, we get that the number
\[
  a_2=\frac{\sinh (2s_2)-\sinh s_2}{\sinh (2\alpha)} \geq 0
\]
satisfies
\[
  a_2 \leq \frac{1}{4\sinh s_2} \leq \frac{1}{4\sinh 1} \leq \frac{1}{4}.
\]
Hence, we can put $b_2=\sqrt{\frac{1}{2}-a_2^2}$ and let $h_2 \in H$ be such that
\[
  \iota(h_2)=\left(\begin{array}{cc} a_2+ib_2 & -\frac{1}{\sqrt{2}} \\ \frac{1}{\sqrt{2}} & a_2-ib_2 \end{array}\right).
\]
Then
\begin{equation} \nonumber
\begin{split}
  \sinh(2s_2)\sinh s_2 &= \sinh^2 \alpha(1-a_2^2-b_2^2),\\
  \sinh(2s_2)-\sinh s_2 &= \sinh(2\alpha)a_2,
\end{split}
\end{equation}
and $\iota(h_2) \in \SU(2)$. Hence, by Lemma \ref{lem:hyperbolaseqs}, $\widetilde{D}(\alpha,0)\widetilde{h}_2\widetilde{D}(\alpha,0) \in S_{2s_2,s_2,t^{\prime\prime}}$, where $t^{\prime\prime}$ is determined by the equations in that lemma. It follows from the explicit formula for $t$, and from the fact that $t^{\prime}$ and $t^{\prime\prime}$ have the same sign, that $|t^{\prime}-t^{\prime\prime}| \leq \frac{\pi}{2}$. Put now $\theta_j=\mathrm{arg}(a_j+ib_j)=\frac{\pi}{2}-\sin^{-1} \left(\frac{a_j}{\sqrt{2}}\right)$ for $j=1,2$. Since $0 \leq a_j \leq \frac{1}{2}$ for $j=1,2$, and since $\frac{d}{dy} \sin^{-1} y = \frac{1}{\sqrt{1-y^2}}\leq\sqrt{2}$ for $y \in [0,\frac{1}{\sqrt{2}}]$, it follows that
\begin{equation} \nonumber
\begin{split}  
|\theta_1-\theta_2| &\leq \bigg\vert\sin^{-1} \left(\frac{a_1}{\sqrt{2}}\right) - \sin^{-1} \left(\frac{a_2}{\sqrt{2}}\right)\bigg\vert \\
  &\leq |a_1-a_2| \\
  &\leq \max\{a_1,a_2\} \\
  &\leq \max\left\{\frac{1}{4\sinh \gamma},\frac{1}{4\sinh t}\right\} \\
  &\leq \frac{1}{4\sinh \frac{\gamma}{2}},
\end{split}
\end{equation}
because $y \geq \frac{\gamma}{2}$. Since $\gamma \geq 2$, we have $\sinh \frac{\gamma}{2}=\frac{1}{2}e^{\frac{\gamma}{2}}(1-e^{-\gamma})\geq\frac{1}{4}e^{\frac{\gamma}{2}}$. Hence, $|\theta_1-\theta_2| \leq e^{-\frac{\gamma}{2}}$. Note that $a_j=\frac{1}{\sqrt{2}}e^{i\theta_j}$ for $j=1,2$, so by Proposition \ref{prp:psihyp}, we have
\begin{equation} \label{eq:psialpha}
\begin{split}
  |\dot{\varphi}(2s_2,s_2,t^{\prime\prime})-\dot{\varphi}(\beta,\gamma,t^{\prime})| &\leq |\psi_{\alpha}(h_1)-\psi_{\alpha}(h_2)| \\
    &\leq \tilde{C}|\theta_1-\theta_2|^{\frac{1}{4}}\|\psi_{\alpha}\|_{M_0A(H)} \\
    &\leq \tilde{C}e^{-\frac{\gamma}{8}}\|\varphi\|_{M_0A(\widetilde{G})}.
\end{split}
\end{equation}
Since $\widetilde{D}(\alpha,0)\widetilde{h}_1\widetilde{D}(\alpha,0) \in S_{\beta,\gamma,t^{\prime}}$ and $\widetilde{D}(\alpha,0)\widetilde{h}_2\widetilde{D}(\alpha,0) \in S_{2s_2,s_2,t^{\prime\prime}}$, it follows that
\begin{equation} \nonumber
  |\dot{\varphi}(\beta,\gamma,t^{\prime})-\dot{\varphi}(2s_2,s_2,t^{\prime})| \leq |\dot{\varphi}(\beta,\gamma,t^{\prime})-\dot{\varphi}(2s_2,s_2,t^{\prime\prime})| + |\dot{\varphi}(2s_1,s_1,t^{\prime})-\dot{\varphi}(2s_2,s_2,t^{\prime\prime})|.
\end{equation}
The first summand is estimated by $\tilde{C}e^{-\frac{\gamma}{8}}\|\varphi\|_{M_0A(\widetilde{G})}$ by \eqref{eq:psialpha}, and the second summand is estimated by $\tilde{B}e^{-\frac{\alpha}{4}}\|\varphi\|_{M_0A(\widetilde{G})}$. It now follows that
\begin{equation} \nonumber
  |\dot{\varphi}(\beta,\gamma,t^{\prime})-\dot{\varphi}(2s_1,s_1,t^{\prime})| \leq \tilde{C}e^{-\frac{\gamma}{8}}\|\varphi\|_{M_0A(\widetilde{G})} + \tilde{B}e^{-\frac{\alpha}{4}}\|\varphi\|_{M_0A(\widetilde{G})}.
\end{equation}
Using the fact that $e^{-\frac{\alpha}{4}} \leq e^{-\frac{\gamma}{8}}$ and using the invariance property of Lemma \ref{lem:cinvariance}, the desired estimate follows with $B_2=\max\{\tilde{C}+\tilde{B},2e^{\frac{1}{4}}\}$.
\end{proof}
We state the following lemma. For a proof, see \cite[Lemma 3.19]{haagerupdelaat1}.
\begin{lem} \label{lem:rhosigma}
  Let $s_1 \geq s_2 \geq 0$. Then the equations
\begin{equation} \nonumber
\begin{split}
  \sinh^2 \beta + \sinh^2 \gamma &= \sinh^2(2s_1)+\sinh^2 s_1, \\
  \sinh \beta \sinh \gamma &= \sinh(2s_2)\sinh s_2,
\end{split}
\end{equation}
have a unique solution $(\beta,\gamma) \in \mathbb{R}^2$ for which $\beta \geq \gamma \geq 0$. Moreover, if $1 \leq s_2 \leq s_1 \leq \frac{3}{2}s_2$, then
\begin{equation} \label{eq:system1}
\begin{split}
  |\beta-2s_1| &\leq 1, \\
  |\gamma+2s_1-3s_2| &\leq 1.
\end{split}
\end{equation}
\end{lem}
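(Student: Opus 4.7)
The plan is to reduce the system to a quadratic in $\sinh^2\beta$ and $\sinh^2\gamma$, which immediately settles existence and uniqueness, and then to perform an asymptotic analysis of its roots under the hypothesis $1\le s_2\le s_1\le \tfrac{3}{2}s_2$. Concretely, set $u=\sinh^2\beta$, $v=\sinh^2\gamma$, $A=\sinh^2(2s_1)+\sinh^2 s_1$ and $B=\sinh(2s_2)\sinh s_2$. The system becomes $u+v=A$, $uv=B^2$, so $u,v$ are the roots of $x^2-Ax+B^2=0$.

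For existence and uniqueness, I would observe that the discriminant $A^2-4B^2$ is non-negative because $s_1\ge s_2\ge 0$: by the AM--GM inequality and monotonicity of $\sinh$ on $[0,\infty)$,
\[
A=\sinh^2(2s_1)+\sinh^2 s_1\ge 2\sinh(2s_1)\sinh s_1\ge 2\sinh(2s_2)\sinh s_2=2B.
\]
Hence there is a unique ordered pair $u\ge v\ge 0$, and since $\sinh$ is a strictly increasing bijection $[0,\infty)\to[0,\infty)$, there is a unique $(\beta,\gamma)$ with $\beta\ge\gamma\ge 0$.

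For the quantitative bounds, I would expand $A$ and $B$ in closed exponential form. Using $\sinh^2 x=\tfrac14(e^{x}-e^{-x})^2$, one obtains
\[
A=\tfrac14 e^{4s_1}\bigl(1+\varepsilon_1(s_1)\bigr),\qquad B=\tfrac14 e^{3s_2}\bigl(1+\varepsilon_2(s_2)\bigr),
\]
with $|\varepsilon_i(\cdot)|$ exponentially small once $s_i\ge 1$. The hypothesis $s_1\le \tfrac{3}{2}s_2$ gives $4B^2/A^2\le C e^{6s_2-8s_1}\le C e^{-2s_1}$, so the discriminant is close to $A^2$ and the two roots satisfy
\[
u=\tfrac{A+\sqrt{A^2-4B^2}}{2}=A\bigl(1+O(e^{-2s_1})\bigr),\qquad v=\tfrac{A-\sqrt{A^2-4B^2}}{2}=\tfrac{B^2}{A}\bigl(1+O(e^{-2s_1})\bigr).
\]
Taking square roots and using $\sinh^{-1}(x)=\log(x+\sqrt{x^2+1})=\log(2x)+O(x^{-2})$ for large $x$, I obtain
\[
\beta=\tfrac12\log(4u)+O(e^{-4s_1})=2s_1+O(e^{-2s_1}),
\]
and similarly $\gamma=\tfrac12\log(4v)+O(e^{-4s_2})=3s_2-2s_1+O(e^{-\min(2s_2,\,2s_1)})$. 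Since $s_1,s_2\ge 1$, these error terms are bounded by $1$, which yields $|\beta-2s_1|\le 1$ and $|\gamma+2s_1-3s_2|\le 1$.

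The main obstacle is making the asymptotic expansions quantitative enough so that the residual errors are bounded by $1$ rather than merely $O(1)$; in particular one has to track the multiplicative $O(e^{-2s_i})$ corrections through the $\log$ and verify that the constraint $s_2\ge 1$ (and hence $s_1\ge 1$) suffices to absorb them. Since this parallels closely the situation treated in \cite[Lemma 3.19]{haagerupdelaat1}, where the same estimates were carried out for $\mathrm{Sp}(2,\mathbb{R})$, the required numerical computations should go through essentially unchanged, and one can in fact invoke that lemma directly once the role of $(s_1,s_2)$ is matched.
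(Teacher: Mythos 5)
The paper gives no proof of this lemma at all --- it simply refers to \cite[Lemma 3.19]{haagerupdelaat1} --- and your argument is essentially a correct reconstruction of that proof: the substitution $u=\sinh^2\beta$, $v=\sinh^2\gamma$ reduces the system to $x^2-Ax+B^2=0$, AM--GM plus monotonicity of $\sinh$ gives $A\geq 2B$ and hence existence and uniqueness of the ordered pair $u\geq v\geq 0$, and the exponential asymptotics yield the bounds. One small imprecision: your claim $\gamma=\tfrac12\log(4v)+O(e^{-4s_2})$ is not valid uniformly, because when $3s_2-2s_1$ is close to $0$ one has $v=O(1)$ and the discrepancy $\sinh^{-1}\sqrt{v}-\tfrac12\log(4v)=\log\bigl(\tfrac{1}{2}(1+\sqrt{1+1/v})\bigr)$ is of order a constant rather than exponentially small. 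This does not sink the argument: under the hypotheses $4v=e^{2(3s_2-2s_1)}\bigl(1+O(e^{-2})\bigr)$ with $3s_2-2s_1\geq 0$, so $\sqrt{v}$ is bounded below by roughly $0.39$, the discrepancy above is at most about $0.62$, and adding the (at most $\approx 0.24$) error in $\tfrac12\log(4v)$ versus $3s_2-2s_1$ still leaves $|\gamma+2s_1-3s_2|\leq 1$ --- but this case distinction (large versus moderate $v$) should be made explicit rather than absorbed into a single $O(e^{-4s_2})$ term.
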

\begin{lem} \label{lem:comparest}
  There exists a constant $B_3 > 0$ such that whenever $s_1,s_2 \geq 0$ satisfy $2 \leq s_2 \leq s_1 \leq \frac{6}{5}s_2$ and $t \in \mathbb{R}$, then for all $\varphi \in M_0A(\widetilde{G}) \cap \mathcal{C}$,
\[
  |\dot{\varphi}(2s_1,s_1,t)-\dot{\varphi}(2s_2,s_2,t)| \leq B_3 e^{-\frac{s_1}{16}} \|\varphi\|_{M_0A(\widetilde{G})}.
\]
\end{lem}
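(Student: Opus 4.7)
The plan is to connect the two points $(2s_1, s_1, t)$ and $(2s_2, s_2, t)$ via an intermediate point $(\beta,\gamma,t)$ chosen so that Lemmas \ref{lem:betagammacir} and \ref{lem:betagammahyp} both apply. Since $2 \leq s_2 \leq s_1 \leq \frac{6}{5}s_2 \leq \frac{3}{2}s_2$, Lemma \ref{lem:rhosigma} produces a unique pair $(\beta,\gamma)$ with $\beta \geq \gamma \geq 0$ such that
\[
  \sinh^2\beta + \sinh^2\gamma = \sinh^2(2s_1) + \sinh^2 s_1, \qquad \sinh\beta\sinh\gamma = \sinh(2s_2)\sinh s_2,
\]
which is precisely the condition $s_1 = s_1(\beta,\gamma)$ and $s_2 = s_2(\beta,\gamma)$ in the sense of Lemma \ref{lem:betagamma}. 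Moreover, Lemma \ref{lem:rhosigma} gives the control $|\beta - 2s_1| \leq 1$ and $|\gamma + 2s_1 - 3s_2| \leq 1$.

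Next I would apply the triangle inequality:
\[
  |\dot\varphi(2s_1,s_1,t) - \dot\varphi(2s_2,s_2,t)| \leq |\dot\varphi(2s_1,s_1,t) - \dot\varphi(\beta,\gamma,t)| + |\dot\varphi(\beta,\gamma,t) - \dot\varphi(2s_2,s_2,t)|,
\]
and estimate the two summands by Lemmas \ref{lem:betagammacir} and \ref{lem:betagammahyp}, obtaining bounds
$B_1 e^{-(\beta-\gamma)/8}\|\varphi\|_{M_0A(\widetilde{G})}$ and $B_2 e^{-\gamma/8}\|\varphi\|_{M_0A(\widetilde{G})}$, respectively.

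The key remaining point is to bound $\beta-\gamma$ and $\gamma$ from below in terms of $s_1$. From the inequalities of Lemma \ref{lem:rhosigma},
\[
  \beta - \gamma \geq (2s_1 - 1) - (3s_2 - 2s_1 + 1) = 4s_1 - 3s_2 - 2 \geq s_1 - 2,
\]
where the last step uses $s_2 \leq s_1$; and
\[
  \gamma \geq 3s_2 - 2s_1 - 1 \geq 3 \cdot \tfrac{5}{6}s_1 - 2s_1 - 1 = \tfrac{s_1}{2} - 1,
\]
where I used $s_2 \geq \frac{5}{6}s_1$. The dominant decay rate is therefore $e^{-\gamma/8} \leq e^{1/8} e^{-s_1/16}$, which is slower than $e^{-(\beta-\gamma)/8} \leq e^{1/4} e^{-s_1/8}$, so combining the two estimates yields
\[
  |\dot\varphi(2s_1,s_1,t) - \dot\varphi(2s_2,s_2,t)| \leq \bigl(B_1 e^{1/4} + B_2 e^{1/8}\bigr) e^{-s_1/16} \|\varphi\|_{M_0A(\widetilde{G})}.
\]
Setting $B_3 := B_1 e^{1/4} + B_2 e^{1/8}$ finishes the proof. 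The main (minor) obstacle is checking that the hypothesis $s_1 \leq \frac{6}{5}s_2$ is tight enough to guarantee $\gamma \gtrsim s_1$, which is what determines the exponent $\frac{1}{16}$ in the final estimate; a looser ratio than $\frac{6}{5}$ would force a weaker exponent (or fail altogether once $3s_2 - 2s_1$ becomes negative).
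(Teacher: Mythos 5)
Your proposal is correct and follows essentially the same route as the paper: the same intermediate point $(\beta,\gamma)$ from Lemma \ref{lem:rhosigma}, the same triangle inequality with Lemmas \ref{lem:betagammacir} and \ref{lem:betagammahyp}, and the same lower bounds $\beta-\gamma \geq s_1-2$ and $\gamma \geq \frac{s_1-2}{2}$ leading to the exponent $\frac{1}{16}$. Only the explicit constant $B_3$ differs trivially from the paper's choice $e^{1/8}(B_1+B_2)$.
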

\begin{proof}
  Choose $\beta \geq \gamma \geq 0$ as in Lemma \ref{lem:rhosigma}. Then by Lemma \ref{lem:betagammacir} and Lemma \ref{lem:betagammahyp}, we have
\begin{equation} \nonumber
\begin{split}
  |\dot{\varphi}(\beta,\gamma,t)-\dot{\varphi}(2s_1,s_1,t)| &\leq B_1e^{-\frac{\beta-\gamma}{8}}\|\varphi\|_{M_0A(\widetilde{G})} \\
  |\dot{\varphi}(\beta,\gamma,t)-\dot{\varphi}(2s_2,s_2,t)| &\leq B_2e^{-\frac{\gamma}{8}}\|\varphi\|_{M_0A(\widetilde{G})}.
\end{split}
\end{equation}
Moreover, by \eqref{eq:system1}, we have
\begin{equation} \nonumber
\begin{split}
  \beta - \gamma &\geq (2s_1-1) - (3s_2-2s_1+1) = 4s_1 - 3s_2 -2 \geq s_1-2, \\
  \gamma &\geq 3s_2-2s_1-1 \geq \frac{5}{2}s_1-2s_1-1=\frac{s_1-2}{2}.
\end{split}
\end{equation}
Hence, since $s_1 \geq 2$, we have $\min\{e^{-\gamma},e^{-(\beta-\gamma)}\} \leq e^{-\frac{s_1-2}{2}}$. Thus, the lemma follows from Lemma \ref{lem:betagammacir} and Lemma \ref{lem:betagammahyp} with $B_3=e^{\frac{1}{8}}(B_1+B_2)$.
\end{proof}
\begin{lem} \label{lem:limit}
  There exists a constant $B_4 > 0$ such that for all $\varphi \in M_0A(\widetilde{G}) \cap \mathcal{C}$ and $t \in \mathbb{R}$ the limit $c_{\varphi}(t)=\lim_{s_1 \to \infty} \dot{\varphi}(2s_1,s_1,t)$ exists, and for all $s_2 \geq 0$,
\[
  |\dot{\varphi}(2s_2,s_2,t)-c_{\varphi}(t)| \leq B_4e^{-\frac{s_2}{16}}\|\varphi\|_{M_0A(\widetilde{G})}.
\]
\end{lem}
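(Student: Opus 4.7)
The plan is to iterate Lemma \ref{lem:comparest} along a geometrically growing chain of values and sum the resulting telescoping estimates. Fix $\varphi \in M_0A(\widetilde{G}) \cap \mathcal{C}$, $t \in \bbR$, and first assume $s_2 \geq 2$; the case $s_2 < 2$ will be absorbed into the constant at the end. For any $s_1 \geq s_2$, I would form the greedy chain $\rho_0 = s_2$, $\rho_k = (6/5)^k s_2$ for $0 < k < N$, and $\rho_N = s_1$, where $N$ is the smallest integer with $(6/5)^N s_2 \geq s_1$. Every consecutive pair satisfies $2 \leq \rho_k \leq \rho_{k+1} \leq (6/5)\rho_k$, so Lemma \ref{lem:comparest} applies at each step. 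Telescoping gives
\[
|\dot\varphi(2s_1, s_1, t) - \dot\varphi(2s_2, s_2, t)| \leq B_3 \|\varphi\|_{M_0A(\widetilde{G})} \sum_{k=1}^{N} e^{-\rho_k/16}.
\]

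The next step is a uniform bound on this sum. Using $\rho_k \geq (6/5)^{k-1} s_2$ and then factoring out $e^{-s_2/16}$, the sum is dominated by
\[
\sum_{k=1}^\infty e^{-(6/5)^{k-1} s_2/16} \; = \; e^{-s_2/16} \sum_{j=0}^\infty e^{-((6/5)^j - 1) s_2/16} \; \leq \; C_0\, e^{-s_2/16},
\]
where the constant $C_0 := \sum_{j=0}^\infty e^{-((6/5)^j - 1)/8}$ (obtained at the worst case $s_2 = 2$) is finite by the super-exponential growth of $(6/5)^j$, and is independent of $s_2 \geq 2$ because the inner tail is monotone in $s_2$. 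This shows that $s_1 \mapsto \dot\varphi(2s_1, s_1, t)$ is Cauchy as $s_1 \to \infty$, hence the limit $c_\varphi(t) := \lim_{s_1 \to \infty} \dot\varphi(2s_1, s_1, t)$ exists; sending $s_1 \to \infty$ in the telescoped inequality yields the desired estimate with constant $B_3 C_0$ for all $s_2 \geq 2$.

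For $s_2 \in [0, 2)$ the estimate is immediate: the bounds $|\dot\varphi(2s_2, s_2, t)|, |c_\varphi(t)| \leq \|\varphi\|_\infty \leq \|\varphi\|_{M_0A(\widetilde{G})}$ combined with $e^{-s_2/16} \geq e^{-1/8}$ give $|\dot\varphi(2s_2, s_2, t) - c_\varphi(t)| \leq 2 e^{1/8} e^{-s_2/16} \|\varphi\|_{M_0A(\widetilde{G})}$, so the conclusion holds with $B_4 = \max\{B_3 C_0,\, 2 e^{1/8}\}$. The only genuine content is the summation bound with the universal constant $C_0$, and this is where the threshold $s_2 \geq 2$ built into Lemma \ref{lem:comparest} is used crucially: without it, the geometric tail would not be uniformly summable. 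Apart from this bookkeeping, the argument is just telescoping applied to Lemma \ref{lem:comparest}.
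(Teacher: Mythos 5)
Your argument is correct and follows essentially the same route as the paper: both iterate Lemma \ref{lem:comparest} along a chain from $s_2$ up to $s_1$, sum a uniformly convergent series to obtain the Cauchy property and the quantitative bound, and absorb small $s_2$ into the constant via $\|\varphi\|_{\infty} \leq \|\varphi\|_{M_0A(\widetilde{G})}$. The only (immaterial) difference is that the paper uses additive unit steps $u \mapsto u+\kappa$ with $\kappa \in [0,1]$ (valid for $u \geq 5$, yielding the geometric series $\sum_{j} e^{-(s_2+j)/16}$), whereas you use a multiplicative chain of ratio $\frac{6}{5}$ valid down to $s_2 \geq 2$.
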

\begin{proof}
  Let $\varphi \in M_0A(\widetilde{G}) \cap \mathcal{C}$, and let $t \in \mathbb{R}$. By Lemma \ref{lem:comparest}, we have for $u \geq 5$ and $\kappa \in [0,1]$, that
\begin{equation} \label{eq:ukappa}
  |\dot{\varphi}(2u,u,t)-\dot{\varphi}(2(u+\kappa),u+\kappa,t)| \leq B_3e^{-\frac{u}{16}}\|\varphi\|_{M_0A(\widetilde{G})}.
\end{equation}
Let $s_1 \geq s_2 \geq 5$. Then $s_1=s_2+n+\delta$, where $n \geq 0$ is an integer and $\delta \in [0,1)$. Applying equation \eqref{eq:ukappa} to $(u,\kappa)=(s_2+j,1)$, $j=0,1,\ldots,n-1$ and $(u,\kappa)=(s_2+n,\delta)$, we obtain
\[
  |\dot{\varphi}(2s_1,s_1,t)-\dot{\varphi}(2s_2,s_2,t)| \leq B_3\left(\sum_{j=0}^n e^{-\frac{s_2+j}{16}}\right) \|\varphi\|_{M_0A(\widetilde{G})} \leq B_3^{\prime}e^{-\frac{s_2}{16}}\|\varphi\|_{M_0A(\widetilde{G})},
\]
where $B_3^{\prime}=(1-e^{-\frac{1}{16}})^{-1}B_3$. Hence $(\dot{\varphi}(2s_1,s_1,t))_{s_1 \geq 5}$ is a Cauchy net for every $t \in \mathbb{R}$. Therefore, $c_{\varphi}(t)=\lim_{s_1 \to \infty} \dot{\varphi}(2s_1,s_1,t)$ exists, and
\[
  |\dot{\varphi}(2s_2,s_2,t)-c_{\varphi}(t)|=\lim_{s_1 \to \infty} |\dot{\varphi}(2s_1,s_1,t)-\dot{\varphi}(2s_2,s_2,t)| \leq B_3^{\prime}e^{-\frac{s_2}{16}}\|\varphi\|_{M_0A(\widetilde{G})}
\]
for all $s_2 \geq 5$. Since $\|\varphi\|_{\infty} \leq \|\varphi\|_{M_0A(\widetilde{G})}$, we have for all $0 \leq s_2 < 5$,
\[
  |\dot{\varphi}(2s_2,s_2,t)-c_{\varphi}(t)| \leq 2\|\varphi\|_{M_0A(\widetilde{G})}.
\]
Hence, the lemma follows with $B_4=\max\{B_3^{\prime},2e^{\frac{5}{16}}\}$.
\end{proof}
\begin{proof}[Proof of Proposition \ref{prp:sp2ab}]
Let $\varphi \in M_0A(\widetilde{G}) \cap \mathcal{C}$ and let $t \in \mathbb{R}$. Let $\beta \geq \gamma \geq 0$. Suppose first that $\beta \geq 2\gamma$. Then $\beta - \gamma \geq \frac{\beta}{2}$, so by Lemma \ref{lem:betagamma} and Lemma \ref{lem:betagammacir}, there exists an $s_1 \geq \frac{\beta}{4}$ such that
\[
  |\dot{\varphi}(\beta,\gamma,t)-\dot{\varphi}(2s_1,s_1,t)| \leq B_1e^{-\frac{\beta}{16}}\|\varphi\|_{M_0A(\widetilde{G})}.
\]
Suppose now that $\beta < 2\gamma$. Then, by Lemma \ref{lem:betagamma} and Lemma \ref{lem:betagammahyp}, we obtain that there exists an $s_2 \geq \frac{\gamma}{2} > \frac{\beta}{4}$ such that
\[
  |\dot{\varphi}(\beta,\gamma,t)-\dot{\varphi}(2s_2,s_2,t)| \leq B_2e^{-\frac{\beta}{16}}\|\varphi\|_{M_0A(\widetilde{G})}.
\]
Combining these estimates with Lemma \ref{lem:limit}, and using again that $s_1$ and $s_2$ majorize $\frac{\beta}{4}$, it follows that for all $\beta \geq \gamma \geq 0$, we have
\[
  |\dot{\varphi}(\beta,\gamma,t)-c_{\varphi}(t)| \leq C_1e^{-\frac{\beta}{64}}\|\varphi\|_{M_0A(\widetilde{G})}.
\]
where $C_1=\max\{B_1+B_4,B_2+B_4\}$. This proves the proposition, for $\sqrt{\beta^2+\gamma^2} \leq \sqrt{2}\beta$.
\end{proof}
\begin{prp}
  For every $\varphi \in M_0A(\widetilde{G}) \cap \mathcal{C}$, the limit function $c_{\varphi}(t)$ is a constant function.
\end{prp}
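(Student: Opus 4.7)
The plan is to deduce constancy of $c_{\varphi}$ directly from Lemma \ref{lem:tdependences}, which is the only one among the preceding results that controls how $\dot{\varphi}$ varies in the third argument along the diagonal ray $\{(2s_1,s_1,\cdot)\}$ where the limit defining $c_{\varphi}$ is taken. The key observation is that in Lemma \ref{lem:tdependences} the bound $\tilde{B}e^{-\alpha/4}\|\varphi\|_{M_0A(\widetilde{G})}$ goes to zero as $\alpha \to \infty$, while the parameter $s_1 = s_1(2\alpha,0)$ simultaneously tends to infinity, since by Lemma \ref{lem:betagamma} one has $s_1 \geq 2\alpha/4 = \alpha/2$.

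More precisely, fix $\varphi \in M_0A(\widetilde{G}) \cap \mathcal{C}$ and $t \in \mathbb{R}$, and suppose $\tau \in [-\pi/2,\pi/2]$. For every $\alpha > 0$, Lemma \ref{lem:tdependences} with $s_1 = s_1(2\alpha,0)$ yields
\[
  |\dot{\varphi}(2s_1,s_1,t) - \dot{\varphi}(2s_1,s_1,t+\tau)| \leq \tilde{B}e^{-\alpha/4}\|\varphi\|_{M_0A(\widetilde{G})}.
\]
Letting $\alpha \to \infty$ (so that $s_1 \to \infty$), the two terms on the left-hand side converge to $c_{\varphi}(t)$ and $c_{\varphi}(t+\tau)$ respectively, by the definition of $c_{\varphi}$ provided by Proposition \ref{prp:sp2ab}, while the right-hand side tends to $0$. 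Consequently $c_{\varphi}(t) = c_{\varphi}(t+\tau)$ whenever $|\tau| \leq \pi/2$.

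Since this equality holds for every $t \in \mathbb{R}$ and every $\tau$ with $|\tau| \leq \pi/2$, one concludes by a standard chaining argument: any two real numbers $t,t'$ can be joined by a finite sequence $t = u_0, u_1, \ldots, u_n = t'$ with $|u_{i+1} - u_i| \leq \pi/2$, so that $c_{\varphi}(t) = c_{\varphi}(u_1) = \cdots = c_{\varphi}(t')$. Thus $c_{\varphi}$ is constant on $\mathbb{R}$, completing the proof. There is no real obstacle here; the entire argument is a short extraction from the already established quantitative estimate of Lemma \ref{lem:tdependences}, and one only has to verify the mild bookkeeping that $s_1(2\alpha,0) \to \infty$ with $\alpha$ in order to be allowed to pass to the limit.
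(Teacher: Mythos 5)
Your proof is correct and follows essentially the same route as the paper: both pass to the limit in a quantitative bound on the $t$-increments of $\dot{\varphi}$ along a ray tending to infinity, deduce $c_{\varphi}(t)=c_{\varphi}(t+\tau)$ for $|\tau|\leq\frac{\pi}{2}$, and chain. The only (harmless) difference is that you invoke Lemma \ref{lem:tdependences}, whose ray $(2s_1,s_1,\cdot)$ matches the defining limit of $c_{\varphi}$ directly, whereas the paper uses Proposition \ref{prp:tdependence} along $(2\alpha,0,\cdot)$ together with the observation from Proposition \ref{prp:sp2ab} that $c_{\varphi}(t)=\lim_{\alpha\to\infty}\dot{\varphi}(2\alpha,0,t)$.
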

\begin{proof}
  From Proposition \ref{prp:sp2ab} and its proof, we know that for every $\varphi \in M_0A(\widetilde{G})$ the limit $c_{\varphi}(t)=\lim_{\beta^2+\gamma^2 \to \infty} \dot{\varphi}(\beta,\gamma,t)$ exists and that $\varphi$ satisfies a certain asymptotic behaviour. It is clear from this expression that the limit may depend on $t$, but it does not depend on how $\beta^2+\gamma^2$ goes to infinity. In particular, we have $c_{\varphi}(t)=\lim_{\alpha \to \infty} \dot{\varphi}(2\alpha,0,t)$. Let $\tau_1,\tau_2$ be such that $|\tau_1-\tau_2| \leq \frac{\pi}{2}$. By Proposition \ref{prp:tdependence}, we have
\[
  |\dot{\varphi}(2\alpha,0,\tau_1)-\dot{\varphi}(2\alpha,0,\tau_2)| \leq 24e^{-2\alpha}\|\varphi\|_{M_0A(\widetilde{G})}.
\]
In the limit $\alpha \to \infty$, this expression gives $c_{\varphi}(\tau_1)=c_{\varphi}(\tau_2)$. i.e., the function $c_{\varphi}(t)$ is constant on any interval of length smaller than or equal to $\frac{\pi}{2}$. Hence, the function $c_{\varphi}$ is constant.
\end{proof}
\begin{cor}
  The space $M_0A(\widetilde{G}) \cap \mathcal{C}_0$ of completely bounded Fourier multipliers $\varphi$ in $\mathcal{C}$ for which $c_{\varphi} \equiv 0$ is a subspace of $M_0A(\widetilde{G}) \cap \mathcal{C}$ of codimension one.
\end{cor}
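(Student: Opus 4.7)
The plan is to interpret the assignment $\varphi \mapsto c_\varphi$ as a linear functional on $M_0A(\widetilde{G}) \cap \mathcal{C}$ taking values in $\bbC$ (which is legitimate thanks to the preceding proposition, since $c_\varphi$ is now a constant function and can be identified with a scalar), and then to show that this functional is both nonzero and surjective onto $\bbC$.

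First, I would observe that by the defining limit $c_\varphi(t) = \lim_{s \to \infty} \dot{\varphi}(2s,s,t)$ from Proposition \ref{prp:sp2ab}, the operation $\varphi \mapsto c_\varphi$ is linear on $M_0A(\widetilde{G}) \cap \mathcal{C}$. By the previous proposition $c_\varphi$ is constant, so we may regard $c_\varphi \in \bbC$, and the subspace $M_0A(\widetilde{G}) \cap \mathcal{C}_0$ is precisely the kernel of this linear functional. Therefore $M_0A(\widetilde{G}) \cap \mathcal{C}_0$ has codimension at most one in $M_0A(\widetilde{G}) \cap \mathcal{C}$.

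Next, I would exhibit a single element of $M_0A(\widetilde{G}) \cap \mathcal{C}$ on which $c_\varphi$ does not vanish, which forces the codimension to equal one. The natural choice is $\varphi \equiv 1$: the constant function $1$ belongs to $M_0A(\widetilde{G})$ with norm $1$, is trivially $\widetilde{H}$-bi-invariant and $\mathrm{Int}(\widetilde{K})$-invariant (so lies in $\mathcal{C}$), and $\dot{1}(\beta,\gamma,t) = 1$ for all $\beta,\gamma,t$, whence $c_1 \equiv 1 \neq 0$. Hence the functional $\varphi \mapsto c_\varphi$ is surjective onto $\bbC$, and the codimension is exactly one.

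There is no real obstacle here; the statement is essentially a direct consequence of the previous proposition together with the observation that the constant function $1$ detects a nonzero value of $c_\varphi$. The only point requiring a moment of care is confirming that the map $\varphi \mapsto c_\varphi$ really is linear, but this is immediate from the definition as a pointwise limit.
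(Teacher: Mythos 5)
Your argument is correct and is precisely the (implicit) reasoning behind the paper's corollary, which is stated without proof: $\varphi \mapsto c_\varphi$ is a linear functional by linearity of the pointwise limit, its kernel is $M_0A(\widetilde{G}) \cap \mathcal{C}_0$, and the constant function $1$ witnesses that the functional is nonzero, so the kernel has codimension exactly one.
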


\section{Noncommutative $L^p$-spaces associated with lattices in $\widetilde{\mathrm{Sp}}(2,\mathbb{R})$} \label{sec:nclpspaces}
Let again $G=\mathrm{Sp}(2,\mathbb{R})$ and $\widetilde{G}=\widetilde{\mathrm{Sp}}(2,\mathbb{R})$. We use the same realization of $\widetilde{G}$ as in Section \ref{sec:covsp2r}, and we use the same notation as in that section (e.g., for the subgroups $K$, $A$, $\overline{{A}^{+}}$, $H$, $H_0$, $H_1$ of $G$ and the corresponding subgroups of $\widetilde{G}$). The main result of this section is a statement about the $\apschur$ for $\widetilde{G}$. This gives rise to the failure of the OAP for certain noncommutative $L^p$-spaces, which will be explained in Section \ref{sec:mainresults}.
\begin{thm} \label{thm:covsp2noapschur}
  For $p \in [1,\frac{12}{11}) \cup (12,\infty]$, the group $\widetilde{G}$ does not have the $\apschur$.
\end{thm}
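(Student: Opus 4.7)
The plan is to repeat the proof of Theorem \ref{thm:covsp2noap} nearly verbatim, replacing $\|\varphi\|_{M_0A(\widetilde{G})}$ by $\|\check\varphi\|_{cbMS^p(L^2(\widetilde{G}))}$ throughout Section \ref{sec:covsp2r}, following the strategy by which \cite{delaat1} derived the analogous $\apschur$ failure for $\Sp(2,\bbR)$ from \cite{haagerupdelaat1}. The averaging projection of Lemma \ref{lem:restrictiontoc} transfers directly: left/right translations by $\widetilde{H}$ and conjugations by $\widetilde{v}_t$ each act on the kernel $\check\varphi(g_1,g_2) = \varphi(g_1^{-1}g_2)$ by a coordinate translation, so each of them preserves the $cbMS^p$-norm, and by the triangle inequality so does the averaged operator $\varphi \mapsto \varphi^{\mathcal{C}}$. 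One may therefore restrict attention to symbols $\varphi$ lying in $\mathcal{C}$.

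The essential new input is to replace the uniform H\"older bounds used in Lemma \ref{lem:hoelderhpq} (disc polynomials) and Lemma \ref{lem:legendreestimates} (Legendre polynomials) by weaker $p$-dependent H\"older bounds. Here one uses the $\ell^p$-weighted estimate from Section \ref{subsec:nclps}: if $\varphi$ is $K_0$-bi-invariant on a compact Gelfand pair $(G_0,K_0)$ and $\check\varphi \in MS^p$, then $\left(\sum_\pi |c_\pi|^p \dim\mathcal{H}_\pi\right)^{1/p} \leq \|\check\varphi\|_{MS^p}$. Combined with uniform $L^q$-bounds ($1/p + 1/q = 1$) on disc polynomials along the circle $|z|=1/\sqrt{2}$ and on Legendre polynomials restricted to $[-\tfrac12,\tfrac12]$, this produces H\"older estimates with positive exponents $\theta_p, \eta_p$, valid for $p \in [1,4/3) \cup (4,\infty]$. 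Feeding these into the derivations of Propositions \ref{prp:psihyp} and \ref{prp:chialpha} gives their $cbMS^p$-analogues, so that the bounds on $\chi_\alpha^{\prime}, \chi_\alpha^{\prime\prime}$, and $\psi_\alpha$ on $H$ are controlled by $\|\check\varphi\|_{cbMS^p(L^2(\widetilde{G}))}$.

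With these modified H\"older estimates, the chain of geometric lemmas \ref{lem:tdependence}--\ref{lem:limit} transfers essentially verbatim, every explicit exponent $\tfrac14, \tfrac18, \tfrac{1}{16}, \ldots$ being replaced by a smaller positive $p$-dependent exponent. Composing them yields the $cbMS^p$-analogue of Proposition \ref{prp:sp2ab}:
\[
  |\dot\varphi(\beta,\gamma,t) - c_\varphi| \leq C_1\, e^{-C_p \sqrt{\beta^2 + \gamma^2}} \|\check\varphi\|_{cbMS^p(L^2(\widetilde{G}))},
\]
together with the fact that the limit $c_\varphi$ is constant in $t$, where $C_p > 0$ exactly for $p \in [1, 12/11) \cup (12, \infty]$, matching the threshold obtained in \cite{delaat1} for $\Sp(2,\bbR)$. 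If $\widetilde{G}$ had the $\apschur$, then by the averaging argument we could pick a net $\varphi_\alpha \in A(\widetilde{G}) \cap \mathcal{C}$ with $\varphi_\alpha \to 1$ uniformly on compacta and $\sup_\alpha \|\check\varphi_\alpha\|_{cbMS^p} \leq C$; since $A(\widetilde{G}) \subset C_0(\widetilde{G})$ forces $c_{\varphi_\alpha} = 0$, the displayed bound would give $|\dot\varphi_\alpha(\beta,\gamma,t)| \leq C_1 C\, e^{-C_p \sqrt{\beta^2 + \gamma^2}}$, contradicting the pointwise convergence $\dot\varphi_\alpha(\beta,\gamma,t) \to 1$ at any sufficiently large $(\beta,\gamma)$. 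The main obstacle is the bookkeeping of $p$-dependent H\"older exponents through the composition, and in particular verifying that $C_p$ is strictly positive precisely in the stated range; the circle-function correction distinguishing $\widetilde{G}$ from $G$ affects only the $t$-coordinate via Proposition \ref{prp:tdependence}, whose exponential decay in $\alpha$ is uniformly stronger than that in $\beta,\gamma$, and hence does not alter the threshold.
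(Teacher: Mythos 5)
Your proposal follows essentially the same route as the paper: restrict to $\mathcal{C}$ by averaging (which preserves the $cbMS^p$-norm since translations and conjugations act on the kernel $\check\varphi$ by coordinate changes), replace the uniform H\"older bounds on disc and Legendre polynomials by $p$-dependent ones obtained from the weighted bound $\bigl(\sum_\pi |c_\pi|^p \dim\mathcal{H}_\pi\bigr)^{1/p} \leq \|\check\varphi\|_{MS^p}$ together with the two-sided (Lipschitz/decay) estimates on the polynomials, and then rerun the chain of geometric lemmas to get the exponential-decay analogue of Proposition \ref{prp:sp2ab}, concluding by the uniform-on-compacta contradiction. One imprecision worth correcting: the intermediate claim that the resulting H\"older estimates have positive exponent for $p \in [1,\tfrac{4}{3}) \cup (4,\infty]$ is true only for the Legendre-polynomial (i.e.\ $(\SU(2),\SO(2))$) estimate, where the exponent is $\tfrac14-\tfrac1p$; for the disc polynomials attached to the strong Gelfand pair $(\SU(2),\U(1))$ the exponent is $\tfrac18-\tfrac{3}{2p}$, which is positive only for $p>12$, and it is precisely this estimate (entering through the hyperbolic-path Lemma \ref{lem:betagammahypp}) that forces the threshold $12$ in the theorem, not a feature of the composition of lemmas. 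Finally, both you and the paper pass from the directly proved range $p>12$ to $p\in(1,\tfrac{12}{11})$ via the duality $\|\check\varphi\|_{MS^p}=\|\check\varphi\|_{MS^{p'}}$ and handle $p=1,\infty$ via the failure of weak amenability; it would be cleaner to say this explicitly rather than fold it into the assertion that $C_p>0$ exactly on the stated range.
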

The proof follows by combining the method of proof of the failure of the AP for $\widetilde{\mathrm{Sp}}(2,\bbR)$ in Section \ref{sec:covsp2r} with the methods that were used in \cite{ldls}, \cite{delaat1} to prove the failure of the $\apschur$ for $\mathrm{SL}(3,\mathbb{R})$ and $\mathrm{Sp}(2,\mathbb{R})$ for certain values of $p \in (1,\infty)$, respectively.

Note that for $p=1$ and $\infty$, the $\apschur$ is equivalent to weak amenability (see \cite[Proposition 2.3]{ldls}), and the failure of weak amenability for $\widetilde{G}$ was proved in \cite{dorofaeff}, so from now on, it suffices to consider $p \in (1,\infty)$. Using an averaging argument similar to the one in Lemma \ref{lem:restrictiontoc} (see \cite[Lemma 2.8]{delaat1} for more details on averaging functions in the setting of the $\apschur$), it follows that if $\widetilde{G}$ has the $\apschur$ for some $p \in (1,\infty)$, then the approximating net can be chosen in $A(\widetilde{G}) \cap \mathcal{C}$.

The following result, which is a direct analogue of Proposition \ref{prp:sp2ab}, gives a certain asymptotic behaviour of continuous functions $\varphi$ in $\mathcal{C}$ for which the induced function $\check{\varphi}$ is a Schur multiplier on $S^p(L^2(\widetilde{G}))$. From this, it follows that the constant function $1$ cannot be approximated pointwise (and hence not uniformly on compacta) by a net in $A(\widetilde{G}) \cap \mathcal{C}$ in such a way that the net of associated multipliers is uniformly bounded in the $MS^p(L^2(\widetilde{G}))$-norm. This implies Theorem \ref{thm:covsp2noapschur}.
\begin{prp} \label{prp:sp2abapschur}
  Let $p > 12$. There exist constants $C_1(p),C_2(p)$ (depending on $p$ only) such that for all $\varphi \in C(\widetilde{G}) \cap \mathcal{C}$ for which $\check{\varphi} \in MS^p(L^2(\widetilde{G}))$, and for all $t \in \mathbb{R}$, the limit $\tilde{c}_{\varphi}^p(t) = \lim_{s \to \infty} \dot{\varphi}(2s,s,t)$ exists, and for all $\beta \geq \gamma \geq 0$,
\[
  |\dot{\varphi}(\beta,\gamma,t)-\tilde{c}_{\varphi}^p(t)| \leq C_1(p)e^{-C_2(p)\sqrt{\beta^2 + \gamma^2}}\|\check{\varphi}\|_{MS^p(L^2(\widetilde{G}))}.
\]
\end{prp}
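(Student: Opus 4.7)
The plan is to reproduce the proof of Proposition \ref{prp:sp2ab} step by step, replacing each completely bounded Fourier multiplier estimate by its Schur-multiplier-on-$S^p$ analogue, following the strategy of \cite{ldls} and \cite{delaat1}. The geometric input (the polar decomposition identities of Lemmas \ref{lem:kakeqs}, \ref{lem:hyperbolaseqs}, \ref{lem:circleseqs} and \ref{lem:exptzdbetagamma}) is intrinsic to the group $\widetilde{G}$ and is independent of the multiplier setting, so it is reused verbatim. The same is true of the averaging lemma (Lemma \ref{lem:restrictiontoc}) and of the reformulation in terms of the equivalence classes $S_{\beta,\gamma,t}$.

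The first substitution concerns the compact-Gelfand-pair expansions. In place of the $\ell^1$-bound $\sum_\pi |c_\pi| = \|\varphi\|_{M_0A(G)}$ of Proposition \ref{prp:cbfmcgp} used in Lemma \ref{lem:hoeldersu2u1} and in its Legendre counterpart, I would apply the weighted $\ell^p$-bound $\bigl(\sum_\pi |c_\pi|^p \dim \mathcal{H}_\pi\bigr)^{1/p} \leq \|\check{\varphi}\|_{MS^p(L^2(G))}$ recalled in Section \ref{subsec:nclps}. Combined with the uniform pointwise H\"older estimates for disc polynomials (Lemma \ref{lem:hoelderhpq}) and Legendre polynomials (Lemma \ref{lem:legendreestimates}) via a standard truncation/real-interpolation argument, as carried out in \cite[Section 3]{ldls} and \cite[Section 3]{delaat1}, this yields $p$-dependent H\"older estimates for $\varphi^0$: the exponents $1/4$ and $1/2$ get replaced by $\alpha_1(p)>0$ and $\alpha_2(p)>0$, both converging to their original values as $p\to\infty$. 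The restriction Propositions \ref{prp:psihyp} and \ref{prp:chialpha} also carry over: Schur multipliers on $S^p(L^2(\widetilde{G}))$ restrict to closed subgroups with non-increasing norm by the standard argument of \cite[Section 2]{delaat1}, so the maps $\psi_\alpha$, $\chi_\alpha^{\prime}$, $\chi_\alpha^{\prime\prime}$ satisfy the analogous Schur-multiplier bounds.

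With these building blocks in place, I would re-run the chain of Lemmas \ref{lem:tdependence}, \ref{lem:tdependences}, \ref{lem:betagammacir}, \ref{lem:betagammahyp}, \ref{lem:comparest} and \ref{lem:limit} essentially word for word, with the H\"older exponents $\alpha_1(p), \alpha_2(p)$ in place of $1/4, 1/2$ and with constants depending on $p$. Each exponential decay factor $e^{-c\,\beta}$ (or $e^{-c\,\gamma}$) that appears becomes $e^{-c(p)\,\beta}$, with $c(p)$ a strictly positive but $p$-dependent quantity coming from the products of the new H\"older exponents with the geometric factors $\tfrac{\beta-\gamma}{8}$ and $\tfrac{\gamma}{8}$ extracted from Lemma \ref{lem:betagamma}. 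At the end, as in Lemma \ref{lem:limit}, one obtains that $\tilde{c}_\varphi^p(t)=\lim_{s\to\infty}\dot{\varphi}(2s,s,t)$ exists and gives the asymptotic estimate with constants $C_1(p),C_2(p)>0$.

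The principal obstacle is a careful bookkeeping of the $p$-dependence in order to identify the precise range of $p$ for which the exponential rate $C_2(p)$ remains strictly positive. The worst step is the analogue of Lemma \ref{lem:comparest}, where the two H\"older exponents (from the circle and hyperbola families of Gelfand pairs) must be combined while the polar-decomposition parameters $\beta-\gamma$ and $\gamma$ are simultaneously controlled via Lemma \ref{lem:rhosigma}. Carrying out the same arithmetic as in the $M_0A$-case, but with the reduced exponents, pins down the threshold $p>12$; the dual range $p \in [1,\tfrac{12}{11})$ is then recovered by the $S^p$--$S^{p/(p-1)}$ duality that underlies the definition of completely bounded Schur multipliers (see \cite[Section 2]{ldls}). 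Once this quantitative check is made, the statement of the proposition follows exactly as in the proof of Proposition \ref{prp:sp2ab}.
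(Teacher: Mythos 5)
Your proposal matches the paper's proof essentially step for step: the paper likewise replaces the $\ell^1$-coefficient bound by the weighted $\ell^p$-bound, interpolates it against the two-sided estimates for disc and Legendre polynomials to get $p$-dependent H\"older exponents ($\tfrac18-\tfrac{3}{2p}$ and $\tfrac14-\tfrac1p$), and then reruns the chain of lemmas culminating in the analogues of Lemmas \ref{lem:comparest} and \ref{lem:limit}. The only minor imprecision is that the threshold $p>12$ is already forced at the level of the disc-polynomial H\"older estimate (Lemma \ref{lem:behavioru2p}, feeding the hyperbola-family Lemma \ref{lem:betagammahypp}) rather than first appearing in the combination step, but this does not affect the argument.
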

To prove this, we again use the strong Gelfand pair $(\SU(2),\U(1))$ and the Gelfand pair $(\SU(2),\SO(2))$, which sit inside $\widetilde{G}$. For the disc polynomials $h_{l,m}$, we need better estimates than in Lemma \ref{lem:hoelderhpq}. These were already given in \cite[Corollary 3.5]{haagerupdelaat1}.
\begin{lem} \label{lem:hoelderu2p}
  For all $l,m \geq 0$, and for $\theta_1,\theta_2 \in [0,2\pi)$, we have
\[
  \biggl\vert h_{l,m}^0 \left( \frac{e^{i\theta_1}}{\sqrt{2}} \right) - h_{l,m}^0 \left( \frac{e^{i\theta_2}}{\sqrt{2}} \right) \biggr\vert \leq C(l+m+1)^{\frac{3}{4}}|\theta_1-\theta_2|,
\]
\[
  \biggl\vert h_{l,m}^0 \left( \frac{e^{i\theta_1}}{\sqrt{2}} \right) - h_{l,m}^0 \left( \frac{e^{i\theta_2}}{\sqrt{2}} \right) \biggr\vert \leq 2C(l+m+1)^{-\frac{1}{4}}.
\]
Here $C>0$ is a uniform constant.
\end{lem}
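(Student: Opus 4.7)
The plan is to reduce both estimates to a single sharp pointwise bound on the disc polynomials restricted to the circle of radius $1/\sqrt{2}$, and then obtain the two inequalities as immediate consequences via differentiation in the angular variable and the triangle inequality, respectively. First I would exploit the symmetry in the definition of $h_{l,m}^0$: since $h_{m,l}^0$ is essentially the complex conjugate of $h_{l,m}^0$ after exchanging the roles of $l$ and $m$, it suffices to treat the case $l \geq m$.

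Next, observe the crucial algebraic simplification at the radius $1/\sqrt{2}$: here $2|z|^2 - 1 = 0$, so the Jacobi polynomial factor evaluates at the origin. Explicitly, for $l \geq m$,
\[
  h_{l,m}^0\!\left(\tfrac{e^{i\theta}}{\sqrt{2}}\right) = \frac{e^{i(l-m)\theta}}{2^{(l-m)/2}}\, P_m^{(0,l-m)}(0).
\]
Thus the $\theta$-dependence is purely a character $e^{i(l-m)\theta}$ with a $\theta$-independent prefactor. In particular, $\left|h_{l,m}^0(e^{i\theta}/\sqrt{2})\right|$ is constant in $\theta$.

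The heart of the matter is the sharp bound
\[
  \left|h_{l,m}^0\!\left(\tfrac{e^{i\theta}}{\sqrt{2}}\right)\right| \;=\; \frac{|P_m^{(0,l-m)}(0)|}{2^{(l-m)/2}} \;\leq\; C\,(l+m+1)^{-1/4},
\]
with $C$ independent of $l,m,\theta$. This is precisely the main technical input from the Haagerup--Schlichtkrull estimates on Jacobi polynomials \cite{hsjacobi}: at the ``equatorial'' point $0$ the polynomials $P_m^{(0,l-m)}(0)$ admit a Stirling-type asymptotic of order $(l+m+1)^{-1/4}\,2^{(l-m)/2}$, and this is what generates the $-1/4$ exponent that appears in Corollary 3.5 of \cite{haagerupdelaat1}. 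I would simply invoke this sharp estimate; reproving it from scratch would be the main obstacle since it requires the precise uniform asymptotics of Jacobi polynomials, not merely Szeg\H{o}-type bounds.

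Granted the sharp bound, both conclusions of the lemma follow in one line. The second inequality is immediate from the triangle inequality:
\[
  \left|h_{l,m}^0\!\left(\tfrac{e^{i\theta_1}}{\sqrt{2}}\right) - h_{l,m}^0\!\left(\tfrac{e^{i\theta_2}}{\sqrt{2}}\right)\right| \leq 2\,C(l+m+1)^{-1/4}.
\]
For the first inequality, differentiating the explicit formula above in $\theta$ yields
\[
  \frac{d}{d\theta}\, h_{l,m}^0\!\left(\tfrac{e^{i\theta}}{\sqrt{2}}\right) = i(l-m)\, h_{l,m}^0\!\left(\tfrac{e^{i\theta}}{\sqrt{2}}\right),
\]
so the derivative has sup-norm at most $|l-m|\cdot C(l+m+1)^{-1/4} \leq C(l+m+1)^{3/4}$. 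The mean value theorem then gives the Lipschitz bound with constant $C(l+m+1)^{3/4}$, which is the first claimed estimate. Thus the only genuine work is importing the sharp pointwise bound; once that is in hand, the two inequalities of the lemma are essentially a one-line trade-off between the trivial $L^\infty$ bound and the derivative estimate that comes from differentiating a character.
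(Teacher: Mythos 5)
Your proposal is correct and is essentially the argument behind the result the paper cites here: the paper gives no proof of Lemma \ref{lem:hoelderu2p} but simply refers to \cite[Corollary 3.5]{haagerupdelaat1}, whose proof proceeds exactly as you describe --- at $|z|=1/\sqrt{2}$ the Jacobi factor collapses to $P_m^{(0,l-m)}(0)$, the Haagerup--Schlichtkrull inequality gives $2^{-(l-m)/2}|P_m^{(0,l-m)}(0)|\leq C(l+m+1)^{-1/4}$, and the two stated bounds follow from the triangle inequality and from differentiating the character $e^{i(l-m)\theta}$ together with $|l-m|\leq l+m+1$.
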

Combining the above two estimates, we get the estimate of Lemma \ref{lem:hoelderhpq}. Combining Lemma \ref{lem:hoeldersu2u1} and \cite[Lemma 2.4]{delaat1}, we obtain that for $\varphi \in L^2(\SU(2) // \U(1))$, there is an induced function $\varphi^0:\mathbb{D} \rightarrow \mathbb{C}$, and
\[
  \varphi^0=\sum_{l,m=0}^{\infty} c_{l,m}(l+m+1) h_{l,m}^0
\]
for certain $c_{l,m} \in \bbC$. Moreover, by \cite[Proposition 2.7]{delaat1}, we obtain that if $p \in (1,\infty)$, then $(\sum_{l,m \geq 0} |c_{l,m}|^p (l+m+1))^{\frac{1}{p}} \leq \|\check{\varphi}\|_{MS^p(L^2(\U(2)))}$.
\begin{lem} \label{lem:behavioru2p}
Let $p > 12$, and let $\varphi : \SU(2) \rightarrow \mathbb{C}$ be a continuous $\mathrm{Int}(\U(1))$-invariant function such that $\check{\varphi}$ is an element of $MS^p(L^2(\SU(2)))$. Then $\varphi^0$ satisfies
\[
  \left|\varphi^0\left(\frac{e^{i\theta_1}}{\sqrt{2}}\right)-\varphi^0\left(\frac{e^{i\theta_2}}{\sqrt{2}}\right)\right| \leq \tilde{C}(p)\|{\check{\varphi}}\|_{MS^p(L^2(\U(2)))}|\theta_1-\theta_2|^{\frac{1}{8}-\frac{3}{2p}}
\]
for $\theta_1,\theta_2 \in [0,2\pi)$. Here, $\tilde{C}(p)$ is a constant depending only on $p$.
\end{lem}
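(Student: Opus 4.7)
The plan is to run the proof of Lemma \ref{lem:hoeldersu2u1} with two modifications: replace the $\ell^{1}$-estimate coming from Bo\.zejko--Fendler by the weighted $\ell^{p}$-estimate
\[
\Bigl(\sum_{l,m \geq 0} |c_{l,m}|^{p}\,(l+m+1)\Bigr)^{1/p} \leq \|\check{\varphi}\|_{MS^{p}(L^{2}(\U(2)))}
\]
recalled just before the statement, and replace the uniform H\"older estimate of Lemma \ref{lem:hoelderhpq} by the sharper two-sided estimate of Lemma \ref{lem:hoelderu2p}. Starting from the expansion $\varphi^{0} = \sum c_{l,m}(l+m+1)\,h_{l,m}^{0}$, I set $\delta = |\theta_{1}-\theta_{2}|$ and $N = l+m+1$, and bound the difference $\varphi^{0}(e^{i\theta_{1}}/\sqrt{2})-\varphi^{0}(e^{i\theta_{2}}/\sqrt{2})$ termwise by $\sum |c_{l,m}|\,N\cdot D_{l,m}$, where $D_{l,m} := |h_{l,m}^{0}(e^{i\theta_{1}}/\sqrt{2})-h_{l,m}^{0}(e^{i\theta_{2}}/\sqrt{2})|$.

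Next I would apply H\"older's inequality with conjugate exponents $p$ and $q = p/(p-1)$, writing $|c_{l,m}|\,N = [|c_{l,m}|\,N^{1/p}]\cdot N^{1/q}$ and using the identity $q(1-1/p) = 1$. This reduces the task to bounding
\[
\Bigl(\sum_{l,m \geq 0} (l+m+1)\,D_{l,m}^{q}\Bigr)^{1/q}
\]
by a constant times $\delta^{1/8-3/(2p)}$. To handle the last quantity I would split the summation at a cutoff $N_{0}$ chosen as a function of $\delta$: for $N \leq N_{0}$ I use the Lipschitz-type bound $D_{l,m} \leq CN^{3/4}\delta$ of Lemma \ref{lem:hoelderu2p}, while for $N > N_{0}$ I interpolate between the two estimates of that lemma to get $D_{l,m} \leq C' N^{\beta-1/4}\delta^{\beta}$ for a suitable parameter $\beta \in [0,1]$. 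Recalling that the number of pairs $(l,m)$ with $l+m+1 = N$ equals $N$, one then optimises jointly over the cutoff $N_{0}$ and the interpolation parameter $\beta$ to obtain the stated H\"older exponent $1/8-3/(2p)$, which is positive precisely when $p > 12$. The constant $\tilde{C}(p)$ absorbs the combinatorial factors arising from H\"older's inequality and the interpolation.

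The main obstacle is the delicate book-keeping in this optimisation: the weight $(l+m+1)$ appearing both in the expansion of $\varphi^{0}$ and in the $\ell^{p}$-bound on $(c_{l,m})$ creates a degeneracy which makes a naive triangle-inequality estimate divergent as soon as $p > 12/11$. Only by carefully balancing the interpolation exponent $\beta$ against the cutoff $N_{0}$ does one simultaneously recover convergence of the high-frequency tail and extract the sharp exponent, in parallel with the optimisations carried out in \cite{ldls} and \cite{delaat1} for the analogous statements concerning $\mathrm{SL}(3,\mathbb{R})$ and $\mathrm{Sp}(2,\mathbb{R})$.
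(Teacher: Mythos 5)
There is a genuine gap, and it sits exactly where you locate the ``main obstacle''. After your H\"older step with exponents $p$ and $q=p/(p-1)$, you must bound $\bigl(\sum_{l,m}(l+m+1)D_{l,m}^q\bigr)^{1/q}=\bigl(\sum_{N\geq 1}N^2\,\sup_{l+m+1=N}D_{l,m}^q\bigr)^{1/q}$ with $q=p/(p-1)<\tfrac{12}{11}$, since $p>12$. On the high-frequency range $N>N_0$ the only usable bound from Lemma \ref{lem:hoelderu2p} is $D_{l,m}\leq 2CN^{-1/4}$ (the interpolated bound $C'N^{\beta-1/4}\delta^{\beta}$ is weaker there, because $N^{\beta}\delta^{\beta}\geq 1$ once $N\gtrsim\delta^{-1}$), so the tail is $\sum_{N>N_0}N^{2-q/4}$, which diverges for every $q<12$; no choice of cutoff $N_0$ and interpolation parameter $\beta$ repairs this. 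In fact no argument using only the weighted $\ell^p$ bound with the \emph{large} exponent $p$ can work: the single admissible coefficient $|c_{l,m}|=N^{-1/p}$ at height $N=l+m+1$ already contributes $N^{1-1/p}D_{l,m}$, which the available pointwise bounds only control by $2CN^{3/4-1/p}\to\infty$.

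The missing ingredient is the self-duality of Schur multiplier norms, $\|\check{\varphi}\|_{MS^p}=\|\check{\varphi}\|_{MS^{p/(p-1)}}$, which lets you apply the quoted coefficient estimate at the conjugate exponent $q=p/(p-1)\in(1,\tfrac{12}{11})$, i.e.\ $\bigl(\sum_{l,m}|c_{l,m}|^{q}(l+m+1)\bigr)^{1/q}\leq\|\check{\varphi}\|_{MS^p(L^2(\U(2)))}$. H\"older then places the \emph{large} exponent on the $D_{l,m}$'s, and the relevant quantity becomes $\bigl(\sum_N N^2 D(N)^{p}\bigr)^{1/p}$, whose tail $\sum_N N^{2-p/4}$ converges precisely because $p>12$ --- this is where the threshold actually enters the argument, not merely through positivity of the final exponent. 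Splitting at $N_0\sim\delta^{-1}$ and balancing the two ranges then yields a bound of order $\delta^{1/4-3/p}$, which for $\delta\leq 1$ is at least as strong as the claimed $\delta^{1/8-3/(2p)}$ (the range $\delta\geq 1$ being trivial since $\|\varphi\|_{\infty}\leq\|\check{\varphi}\|_{MS^p}$). For the record, the paper does not redo this computation at all: it reduces the statement to \cite[Lemma 3.5]{delaat1} via the $MS^p$-analogue of the isometric identification of multipliers on $\SU(2)/\!/\U(1)$ with those on $L\backslash\U(2)/L$ carried out in Lemma \ref{lem:hoeldersu2u1}; your starting point (the expansion of $\varphi^0$ and the weighted $\ell^p$ coefficient bound) already presupposes that identification.
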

The proof of this lemma is exactly the same as the proof of \cite[Lemma 3.5]{delaat1} after identifying the spaces $C(\SU(2) // \U(1))$ and $C(L \backslash \U(2) / L)$ and proving an isometric isomorphism in the setting of multipliers on Schatten classes as was done in Lemma \ref{lem:hoeldersu2u1} in the setting of completely bounded Fourier multipliers.
\begin{lem} \label{lem:fromGtoKp1}
	Let $\varphi \in C(\widetilde{G}) \cap \mathcal{C}$ such that $\check{\varphi} \in MS^p(L^2(\widetilde{G}))$ for some $p \in (1,\infty)$, and for $\alpha \in \bbR$, let $\psi_{\alpha}:H \rightarrow \bbC$ be defined by $\psi_{\alpha}(h)=\varphi(\widetilde{D}(\alpha,0)\widetilde{h}\widetilde{D}(\alpha,0))$. Then $\psi_{\alpha}$ is an element of $C(H // H_0)$ and satisfies
	\[
		\|\check{\psi}_{\alpha}\|_{MS^p(L^2(H))} \leq \|\check{\varphi}\|_{MS^p(L^2(\widetilde{G}))}.
	\]
\end{lem}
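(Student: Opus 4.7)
The plan is to follow the proof of Proposition \ref{prp:psihyp} verbatim, simply replacing the completely bounded Fourier multiplier norm $\|\cdot\|_{M_0A(\widetilde{G})}$ by the Schur multiplier norm $\|\check{\cdot}\|_{MS^p(L^2(\widetilde{G}))}$ throughout. The membership $\psi_\alpha \in C(H//H_0)$ is purely group-theoretic and has nothing to do with the ambient multiplier class. Writing $h_0 \in H_0$ as $h_0 = k_0 v_t$ as in Proposition \ref{prp:psihyp}, the lift $\widetilde{k}_0$ lies in $\widetilde{\exp}(\mathfrak{k}_0)$, and the generator of $\widetilde{k}_0$ in $\mathfrak{k}_0$ commutes with the $\mathfrak{a}$-generator of $\widetilde{D}(\alpha,0)$, so $\widetilde{k}_0 \widetilde{D}(\alpha,0) = \widetilde{D}(\alpha,0) \widetilde{k}_0$. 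Combined with the $\mathrm{Int}(\widetilde{K})$-invariance built into $\mathcal{C}$ (which absorbs the central factor $\widetilde{v}_t$), this yields $\psi_\alpha(h_0 h h_0^{-1}) = \psi_\alpha(h)$ exactly as in Proposition \ref{prp:psihyp}.

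For the norm estimate, I would unpack $\check{\psi}_\alpha$ directly. For $h_1, h_2 \in H$ with lifts $\widetilde{h}_1, \widetilde{h}_2 \in \widetilde{H}$, one computes
\[
  \check{\psi}_\alpha(h_1, h_2) = \psi_\alpha(h_1^{-1}h_2) = \varphi(\widetilde{D}(\alpha,0)\,\widetilde{h}_1^{-1}\widetilde{h}_2\,\widetilde{D}(\alpha,0)) = \check{\varphi}\bigl(\widetilde{h}_1 \widetilde{D}(\alpha,0)^{-1},\, \widetilde{h}_2 \widetilde{D}(\alpha,0)\bigr).
\]
Thus $\check{\psi}_\alpha$ is obtained from $\check{\varphi}$ by two operations, neither of which increases the $MS^p$ norm: first, right-translating each coordinate by the fixed elements $\widetilde{D}(\alpha,0)^{\pm 1}$ of $\widetilde{G}$, which is an isometry since right-translation is unitary on $L^2(\widetilde{G})$ (and $\widetilde{G}$ is unimodular); and second, restricting from $\widetilde{G} \times \widetilde{G}$ to the closed subgroup $\widetilde{H} \times \widetilde{H}$. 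The latter is the standard Schur multiplier restriction principle for $S^p$ in the Lafforgue--de la Salle framework, and has been used in the same form in \cite{ldls, delaat1}. Compactness of $\widetilde{H} \cong H \cong \SU(2)$ ensures that the Haar measure of $\widetilde{H}$ is the restriction (up to a harmless normalisation) of the Haar measure of $\widetilde{G}$, so the restriction step makes sense and gives $\|\check{\psi}_\alpha\|_{MS^p(L^2(H))} \leq \|\check{\varphi}\|_{MS^p(L^2(\widetilde{G}))}$.

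The main (minor) obstacle is the subgroup restriction step, since, unlike completely bounded Fourier multipliers, Schur multipliers on $S^p$ do not automatically restrict well from a group to a subgroup for every $p$; one needs the Haar measures to be compatible. Here, however, $\widetilde{H}$ is compact, so this subtlety does not arise and the estimate reduces to the results already catalogued in \cite{ldls, delaat1}. Putting the two reductions together gives the claim.
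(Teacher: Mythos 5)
Your argument is essentially the paper's: the paper also handles the membership $\psi_\alpha\in C(H//H_0)$ by the commutation argument of Proposition \ref{prp:psihyp}, and disposes of the norm inequality by observing that $\check{\psi}_\alpha$ is the pullback of $\check{\varphi}$ along the continuous maps $h\mapsto \widetilde{h}\,\widetilde{D}(\alpha,0)^{\mp1}$ of $H$ into $\widetilde{G}$ and invoking the composition/restriction principle of \cite[Lemma 2.3]{delaat1}. Your explicit factorisation into a right-translation followed by a restriction to $\widetilde{H}\times\widetilde{H}$ is a correct unpacking of that citation.

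One justification in your last paragraph is wrong, although the step it supports is fine. The Haar measure of $\widetilde{H}$ is \emph{not} the restriction of the Haar measure of $\widetilde{G}$: $\widetilde{H}\cong\SU(2)$ is a three-dimensional submanifold of the ten-dimensional group $\widetilde{G}$, hence a null set, so the restricted measure is zero and compactness is irrelevant here. What actually legitimises the restriction step is that $\check{\varphi}$ is a \emph{continuous} symbol and both measures have full support: for continuous kernels the $MS^p$-norm coincides with the supremum of the norms of the restrictions to finite subsets of the support (Lafforgue--de la Salle), so passing to any closed subset equipped with any fully supported Radon measure can only decrease the norm. This is precisely the content of \cite[Lemma 2.3]{delaat1}, and it is also why the subtlety you raise about subgroup restriction for Schur multipliers does not arise; with that correction your proof is complete.
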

\begin{proof}
The fact that $\psi_{\alpha} \in C(H // H_0)$ follows as in Proposition \ref{prp:psihyp}. The second part follows by the fact that $\widetilde{D}(\alpha,0)\widetilde{H}\widetilde{D}(\alpha,0)$ is a subset of $\widetilde{G}$ and by applying \cite[Lemma 2.3]{delaat1}.
\end{proof}
We now turn to the second pair of groups $(H,H_1)$. We again need the Legendre polynomials, which act as spherical functions. The following estimate was proved in \cite[Lemma 3.8]{delaat1}.
\begin{lem} \label{lem:hoeldersu2p}
For all non-negative integers $n$, and $x,y \in [-\frac{1}{2},\frac{1}{2}]$,
\begin{equation} \nonumber
\begin{split}
  |P_n(x)-P_n(y)| &\leq |P_n(x)|+|P_n(y)| \leq \frac{4}{\sqrt{n}},\\
  |P_n(x)-P_n(y)| &\leq \left|\int_x^y P_n^{\prime}(t)dt\right| \leq 4\sqrt{n}|x-y|.
\end{split}
\end{equation}
\end{lem}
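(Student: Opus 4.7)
The first inequality in each chain is automatic: it is the triangle inequality for the first estimate and the fundamental theorem of calculus $P_n(x)-P_n(y)=\int_y^x P_n'(t)\,dt$ (together with the triangle inequality for integrals) for the second. The content of the lemma therefore lies in the two pointwise bounds
\[
  |P_n(t)| \leq \frac{2}{\sqrt{n}}, \qquad |P_n'(t)| \leq 4\sqrt{n}
\]
for all $t \in [-\tfrac{1}{2},\tfrac{1}{2}]$ and $n \geq 1$. Given these, both conclusions follow immediately.

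For the bound on $|P_n|$ I would invoke the classical Laplace--Heine asymptotic formula
\[
  P_n(\cos\theta) = \sqrt{\frac{2}{\pi n \sin\theta}}\,\cos\!\bigl((n+\tfrac{1}{2})\theta - \tfrac{\pi}{4}\bigr) + O(n^{-3/2}),
\]
which holds uniformly on compact subintervals of $(0,\pi)$. For $x=\cos\theta \in [-\tfrac{1}{2},\tfrac{1}{2}]$ we have $\theta \in [\tfrac{\pi}{3},\tfrac{2\pi}{3}]$, so $\sin\theta \geq \tfrac{\sqrt{3}}{2}$, and the leading term is at most $\sqrt{4/(\pi\sqrt{3}\,n)} < 1/\sqrt{n}$, leaving ample room for the constant $2/\sqrt{n}$ once $n$ is large enough; the finitely many small values of $n$ are then verified by hand. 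A more direct route is to read the appropriate uniform bound off Szeg\"o's treatise \cite{szegoe}, as suggested in the paragraph preceding Lemma \ref{lem:legendreestimates}.

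For the derivative bound I would use the standard identity
\[
  (1-x^2)\,P_n'(x) = n\bigl(P_{n-1}(x) - x P_n(x)\bigr).
\]
On $[-\tfrac{1}{2},\tfrac{1}{2}]$ one has $1-x^2 \geq \tfrac{3}{4}$ and $|x| \leq \tfrac{1}{2}$, so combining this identity with the pointwise bound from the previous step yields
\[
  |P_n'(x)| \leq \tfrac{4}{3}\bigl(n\,|P_{n-1}(x)| + \tfrac{n}{2}\,|P_n(x)|\bigr) \leq \tfrac{4}{3}\!\left(\frac{2n}{\sqrt{n-1}} + \sqrt{n}\right) \leq 4\sqrt{n}
\]
for $n \geq 2$, with the case $n=1$ trivial since $P_1'\equiv 1$.

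The only real obstacle is numerical bookkeeping: the Laplace--Heine formula pins down the correct exponent $-\tfrac{1}{2}$, but the explicit constants must be tracked with some care, and the clean form $4\sqrt{n}$ in the second estimate is somewhat fortuitous once one follows through the recursion. The cleanest approach, presumably the one taken in \cite[Lemma~3.8]{delaat1}, is simply to quote the sharp pointwise inequalities for $P_n$ and $P_n'$ directly from Szeg\"o \cite{szegoe}.
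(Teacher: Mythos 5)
The paper offers no proof of this lemma at all: it is quoted verbatim from \cite[Lemma 3.8]{delaat1}, which in turn rests on the classical pointwise bounds for $P_n$ and $P_n'$ from Szeg\H{o} \cite{szegoe} (Bernstein's inequality $|P_n(\cos\theta)|\leq\sqrt{2/(\pi n\sin\theta)}$ and the standard derivative identity). So your closing guess about the provenance is right, and your overall strategy --- triangle inequality plus fundamental theorem of calculus, then pointwise bounds on $P_n$ and $P_n'$ over $[-\tfrac12,\tfrac12]$ via $\sin\theta\geq\tfrac{\sqrt3}{2}$ and the identity $(1-x^2)P_n'(x)=n(P_{n-1}(x)-xP_n(x))$ --- is exactly the right skeleton.

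However, the final numerical chain in your derivative estimate is false as written. You feed the weakened bound $|P_m(x)|\leq 2/\sqrt{m}$ into the identity and claim $\tfrac43\bigl(\tfrac{2n}{\sqrt{n-1}}+\sqrt{n}\bigr)\leq 4\sqrt{n}$; this would require $\tfrac{2n}{\sqrt{n-1}}\leq 2\sqrt{n}$, i.e.\ $n^2\leq n(n-1)$, which fails for every $n\geq 2$ (e.g.\ for $n=2$ your left-hand side is $\tfrac{16}{3}+\tfrac{4\sqrt2}{3}\approx 7.22$ while $4\sqrt2\approx 5.66$, and the gap does not close asymptotically either, since the expression tends to $4\sqrt{n}$ from above). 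The repair is cheap: do not round the Bernstein constant up to $2$. On $[-\tfrac12,\tfrac12]$ Bernstein's inequality gives $|P_m(x)|\leq\sqrt{4/(\pi\sqrt3\,m)}\leq 0.86/\sqrt{m}$ for all $m\geq1$ (as a genuine inequality, not just an asymptotic, so no separate treatment of small $n$ is needed), and then $\tfrac43 n\bigl(|P_{n-1}(x)|+\tfrac12|P_n(x)|\bigr)\leq \tfrac{4\cdot 0.86}{3}\bigl(\sqrt{2}+\tfrac12\bigr)\sqrt{n}\approx 2.2\,\sqrt{n}\leq 4\sqrt{n}$ for $n\geq2$, using $n/\sqrt{n-1}\leq\sqrt{2n}$. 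With that substitution your argument is complete; alternatively, both pointwise bounds can simply be quoted from Szeg\H{o}, which is what \cite{delaat1} effectively does.
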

Combining the two estimates above, yields the estimate of Lemma \ref{lem:legendreestimates}. Let $\varphi:\SU(2) \rightarrow \bbC$ be a $\SO(2)$-bi-invariant continuous function. Then $\varphi(h)=\varphi^0(r)$ as in Section \ref{sec:covsp2r}. It follows that $\varphi^0=\sum_{n=0}^{\infty} c_n(2n+1) P_n$ for certain $c_n \in \bbC$. Moreover, as above, we obtain that if $p \in (1,\infty)$, then $(\sum_{n \geq 0} |c_n|^p (2n+1))^{\frac{1}{p}} \leq \|\check{\varphi}\|_{MS^p(L^2(\SU(2)))}$, where $\check{\varphi}$ is defined as above by $\check{\varphi}(g,h)=\varphi(g^{-1}h)$. The following result can be found in \cite[Lemma 3.9]{delaat1}.
\begin{lem} \label{lem:behaviorsu2p}
Let $p>4$, and let $\varphi \in C(\SO(2) \backslash \SU(2) / \SO(2))$ be such that $\check{\varphi} \in MS^p(L^2(\SU(2)))$. Then $\varphi^0$ satisfies
\[
  |\varphi^0(\delta_1)-\varphi^0(\delta_2)| \leq \hat{C}(p)\|{\varphi}\|_{MS^p(L^2(\SU(2)))}|\delta_1-\delta_2|^{\frac{1}{4}-\frac{1}{p}}
\]
for $\delta_1,\delta_2 \in [-\frac{1}{2},\frac{1}{2}]$. Here $\hat{C}(p)$ is a constant depending only on $p$.
\end{lem}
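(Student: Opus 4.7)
The plan is to follow the template of Lemma \ref{lem:behavioru2p} (which itself adapts \cite[Lemma 3.5]{delaat1}), substituting the Legendre polynomials $P_n$ for the disc polynomials $h_{l,m}^0$. The essential inputs are the spherical expansion $\varphi^0 = \sum_{n\geq 0} c_n(2n+1)P_n$, the weighted $\ell^p$-bound on the coefficients $(\sum_n |c_n|^p (2n+1))^{1/p} \leq \|\check{\varphi}\|_{MS^p(L^2(\SU(2)))}$ recalled just before the lemma, and the twin Legendre estimates $|P_n(x)-P_n(y)| \leq 4\min(n^{-1/2}, \sqrt{n}\,|x-y|)$ of Lemma \ref{lem:hoeldersu2p}.

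Starting from the identity
\[
\varphi^0(\delta_1)-\varphi^0(\delta_2) \;=\; \sum_{n\geq 0} c_n (2n+1)\bigl(P_n(\delta_1)-P_n(\delta_2)\bigr),
\]
I would apply H\"older's inequality along $n$ with conjugate exponents $p$ and $p'=p/(p-1)$, together with the symmetry $\|\check{\varphi}\|_{MS^{p}}=\|\check{\varphi}\|_{MS^{p'}}$ coming from Schur-multiplier duality (which is legitimate here because the symbol $\check{\varphi}(g_1,g_2)=\varphi(g_1^{-1}g_2)$ is real and invariant under $(g_1,g_2)\mapsto(g_2,g_1)$ for $\mathrm{SO}(2)$-bi-invariant $\varphi$, as $\varphi(g^{-1})=\varphi(g)$ in this case). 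The aim is to pair the factors so that one is controlled by the $\ell^{p'}$-norm of the coefficients (hence by $\|\check{\varphi}\|_{MS^p}$) and the other reduces to a purely Legendre-polynomial sum of the form $\bigl(\sum_n(2n+1)|P_n(\delta_1)-P_n(\delta_2)|^p\bigr)^{1/p}$.

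This remaining sum is then estimated by splitting at the natural threshold $n_0\approx 1/|\delta_1-\delta_2|$ at which the two Legendre bounds coincide: on $\{n\leq n_0\}$ I would use the derivative estimate $\sqrt{n}\,|\delta_1-\delta_2|$, while on $\{n>n_0\}$ I would use the uniform estimate $n^{-1/2}$. Both resulting subsums are of geometric type and evaluate to the same power of $|\delta_1-\delta_2|$; after taking the $p$-th root and combining with the coefficient bound, the H\"older exponent $\tfrac{1}{4}-\tfrac{1}{p}$ emerges.

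The main obstacle is the convergence of the high-frequency tail $\sum_{n>n_0}(2n+1)|P_n(\delta_1)-P_n(\delta_2)|^p\sim \sum_n n^{1-p/2}$, which forces the restriction $p>4$ appearing in the hypothesis and is responsible for the degeneration of the H\"older exponent to $0$ at the endpoint $p=4$. Once this balancing is identified, the remainder is a routine H\"older--interpolation computation, structurally identical to the one used for the disc polynomials in the proof of Lemma \ref{lem:behavioru2p} and in \cite[Lemma 3.5]{delaat1}.
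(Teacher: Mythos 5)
The paper offers no proof of this lemma; it is quoted verbatim from \cite[Lemma 3.9]{delaat1}, so your reconstruction can only be compared with that source. Your argument is correct and is essentially the intended one: the only viable H\"older pairing is indeed the one you choose, namely coefficients in weighted $\ell^{p'}$ (via $\|\check{\varphi}\|_{MS^{p'}}=\|\check{\varphi}\|_{MS^{p}}$) against the Legendre differences in weighted $\ell^{p}$ --- pairing the $\ell^p$ coefficient bound against the conjugate-exponent Legendre sum would fail, since $\sum_n (2n+1)|P_n(\delta_1)-P_n(\delta_2)|^{p'}$ diverges for $p'<4$. Two small remarks. First, the duality step does not require $\varphi$ to be real, only the symmetry $\check{\varphi}(g_1,g_2)=\check{\varphi}(g_2,g_1)$, i.e.\ $\varphi(g^{-1})=\varphi(g)$; this holds because $g$ and $g^{-1}$ lie in the same $\SO(2)$-double coset of $\SU(2)$ (the invariant $a^2-b^2+c^2-d^2$ is unchanged under $g\mapsto g^{-1}$). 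Second, your split at $n_0\approx|\delta_1-\delta_2|^{-1}$ makes both pieces of order $|\delta_1-\delta_2|^{\frac{p}{2}-2}$ and hence actually yields the exponent $\tfrac12-\tfrac2p$, which is at least $\tfrac14-\tfrac1p$ for $p\geq 4$; since $|\delta_1-\delta_2|\leq 1$, the stated (weaker) estimate follows a fortiori, the cited source arriving at $\tfrac14-\tfrac1p$ by interpolating the two Legendre bounds pointwise with a fixed exponent rather than splitting the sum.
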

\begin{lem}
Let $\varphi \in C(\widetilde{G}) \cap \mathcal{C}$ such that $\check{\varphi} \in MS^p(L^2(\widetilde{G}))$ for some $p \in (1,\infty)$. For $\alpha \geq 0$, let $\chi_{\alpha}^{\prime}:H \rightarrow \mathbb{C}$ be defined by $h \mapsto \varphi(\widetilde{D}(\alpha,\alpha)\widetilde{v}\widetilde{h}\widetilde{D}(\alpha,\alpha))$, and let $\chi_{\alpha}^{\prime\prime}:H \rightarrow \mathbb{C}$ bedefined by $h \mapsto \varphi(\widetilde{D}(\alpha,\alpha)\widetilde{v}^{-1}\widetilde{h}\widetilde{D}(\alpha,\alpha))$. These maps are $H_1$-bi-invariant such that $\check{\chi}_{\alpha}^{\prime},\check{\chi}_{\alpha}^{\prime\prime} \in MS^p(L^2(H))$. Moreover, we obtain that $\|\check{\chi}_{\alpha}^{\prime}\|_{MS^p(L^2(H))} \leq \|\check{\varphi}\|_{MS^p(L^2(\widetilde{G}))}$ and $\|\check{\chi}_{\alpha}^{\prime\prime}\|_{MS^p(L^2(H))} \leq \|\check{\varphi}\|_{MS^p(L^2(\widetilde{G}))}$. 
\end{lem}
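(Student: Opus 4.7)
My plan has two parts, closely paralleling Proposition \ref{prp:chialpha} (for the bi-invariance) and Lemma \ref{lem:fromGtoKp1} (for the Schur norm bound); the only genuinely new ingredient compared with the latter is the handling of the extra factor $\widetilde{v}^{\pm 1}$.

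For the $H_1$-bi-invariance, I would first establish two commutation facts. Since $\widetilde{v}$ lies in the center of $\widetilde{K}$ and $\widetilde{H}_1 \subset \widetilde{K}$, the element $\widetilde{v}$ commutes with every element of $\widetilde{H}_1$. Since the generator of $\widetilde{H}_1$ (simultaneous rotation in the $(x_1,x_2)$- and $(x_3,x_4)$-planes) bracket-commutes with the Cartan generator $\mathrm{diag}(1,1,-1,-1)$ of $\widetilde{D}(\alpha,\alpha)$, and commuting Lie algebra elements integrate to commuting one-parameter subgroups in any Lie group via $\widetilde{\exp}(X)\widetilde{\exp}(Y)=\widetilde{\exp}(X+Y)$, the element $\widetilde{D}(\alpha,\alpha)$ also commutes with $\widetilde{H}_1$. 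Combining these commutations with the $\widetilde{H}$-bi-invariance of $\varphi$ (and the inclusion $\widetilde{H}_1 \subset \widetilde{H}$) yields, for $h \in H$ and $h_1, h_2 \in H_1$,
\[
\chi_\alpha'(h_1 h h_2) = \varphi(\widetilde{h}_1 \widetilde{D}(\alpha,\alpha)\widetilde{v}\widetilde{h}\widetilde{D}(\alpha,\alpha)\widetilde{h}_2) = \chi_\alpha'(h),
\]
and the same for $\chi_\alpha''$.

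For the Schur multiplier norm bound, I would express $\check{\chi}_\alpha'$ as a pullback of $\check{\varphi}$ along two maps and invoke the restriction principle for Schur $S^p$-multipliers in the spirit of \cite[Lemma 2.3]{delaat1}. Define $S_1, S_2 : H \to \widetilde{G}$ by $S_1(h) = \widetilde{h}\,\widetilde{v}^{-1}\widetilde{D}(\alpha,\alpha)^{-1}$ and $S_2(h) = \widetilde{h}\,\widetilde{D}(\alpha,\alpha)$. A direct computation gives $S_1(h_1)^{-1} S_2(h_2) = \widetilde{D}(\alpha,\alpha)\widetilde{v}\widetilde{h}_1^{-1}\widetilde{h}_2\widetilde{D}(\alpha,\alpha)$, so that $\check{\chi}_\alpha'(h_1, h_2) = \check{\varphi}(S_1(h_1), S_2(h_2))$, and the required inequality follows. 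An identical argument with $\widetilde{v}^{-1}$ in place of $\widetilde{v}$ handles $\chi_\alpha''$. I do not anticipate any real obstacle; the only mildly delicate point is accommodating pullbacks along two different maps in the restriction principle, which is a routine extension of the single-map case in \cite[Lemma 2.3]{delaat1} (obtainable, for instance, by applying the single-map result to the disjoint union $H \sqcup H \to \widetilde{G}$ mapping the two copies by $S_1$ and $S_2$, followed by restriction to the off-diagonal rectangular block, which does not increase the Schur multiplier norm).
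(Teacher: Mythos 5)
Your proof is correct and follows essentially the same route as the paper's (very brief) proof: the $H_1$-bi-invariance comes from the commutation of $\widetilde{H}_1$ with both $\widetilde{v}$ (centrality in $\widetilde{K}$) and $\widetilde{D}(\alpha,\alpha)$, combined with the $\widetilde{H}$-bi-invariance of $\varphi$, and the norm bound is the restriction principle of \cite[Lemma 2.3]{delaat1} applied to the subsets $\widetilde{D}(\alpha,\alpha)\widetilde{v}^{\pm 1}\widetilde{H}\widetilde{D}(\alpha,\alpha)$ of $\widetilde{G}$, exactly as the paper does. Your version establishes the inequality with $\|\check{\varphi}\|_{MS^p(L^2(\widetilde{G}))}$ on the right-hand side, which is clearly what is intended in the statement (the $L^2(H)$ there is a typo, cf.\ Lemma \ref{lem:fromGtoKp1}).
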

The fact that the maps are $H_1$-bi-invariant is similar to the case of completely bounded Fourier multipliers. The second part follows by the fact that the sets $\widetilde{D}(\alpha,\alpha)\widetilde{v}\widetilde{H}\widetilde{D}(\alpha,\alpha)$ and $\widetilde{D}(\alpha,\alpha)\widetilde{v}^{-1}\widetilde{H}\widetilde{D}(\alpha,\alpha)$ are subsets of $\widetilde{G}$ and by applying \cite[Lemma 2.3]{delaat1}.
\begin{prp} \label{prp:tdependencep}
  Let $p > 4$, and let $\varphi \in \mathcal{C}$ such that $\check{\varphi} \in MS^p(L^2(\widetilde{G}))$. If $|\tau_1-\tau_2| \leq \frac{\pi}{2}$ and $\alpha \geq 0$, then
\[
  |\dot{\varphi}(2\alpha,0,\tau_1)-\dot{\varphi}(2\alpha,0,\tau_2)| \leq D(p) e^{-2\alpha(\frac{1}{4}-\frac{1}{p})} \|\check{\varphi}\|_{MS^p(L^2(\widetilde{G}))},
\]
where $D(p) > 0$ is a constant depending only on $p$.
\end{prp}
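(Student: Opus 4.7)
The plan is to follow the proof of Proposition \ref{prp:tdependence} step by step, substituting the $S^p$ Hölder estimate of Lemma \ref{lem:behaviorsu2p} (valid since $p > 4$) for the Legendre Hölder estimate used through Lemma \ref{lem:legendreestimates}. Because the Hölder exponent changes from $\frac{1}{2}$ to $\frac{1}{4}-\frac{1}{p}$ while the radial parameter still satisfies $|r| \lesssim e^{-2\alpha}$, the exponential decay rate in the conclusion becomes $e^{-2\alpha(\frac{1}{4}-\frac{1}{p})}$.

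First I would establish the $S^p$-analogue of Lemma \ref{lem:tdependence}: for $\alpha \geq 2$ and $\tau \in [-\pi/4,\pi/4]$, set $r = -\frac{2\cosh(2\alpha)}{\sinh^2(2\alpha)}\tan\tau$, and let $\beta \geq \gamma \geq 0$ be as in that lemma. Using the lemma immediately preceding Proposition \ref{prp:tdependencep} to pass from $\check{\varphi} \in MS^p(L^2(\widetilde{G}))$ to the $H_1$-bi-invariant function $\chi'_\alpha$ on $H$ with $\|\check{\chi}'_\alpha\|_{MS^p(L^2(H))} \leq \|\check{\varphi}\|_{MS^p(L^2(\widetilde{G}))}$, and then applying Lemma \ref{lem:behaviorsu2p} to $\chi'_\alpha$, one obtains
\[
  |\dot{\varphi}(\beta,\gamma,\tau) - \dot{\varphi}(2\alpha,0,0)| \leq \hat{C}(p)\,|r|^{\frac{1}{4}-\frac{1}{p}} \|\check{\varphi}\|_{MS^p(L^2(\widetilde{G}))},
\]
provided $|r| \leq \tfrac{1}{2}$. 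The bound $|r| \leq 5e^{-2\alpha} \leq \tfrac{1}{2}$ for $\alpha \geq 2$ is verified exactly as in the proof of Lemma \ref{lem:tdependence}, leading to
\[
  |\dot{\varphi}(\beta,\gamma,\tau) - \dot{\varphi}(2\alpha,0,0)| \leq D_0(p)\,e^{-2\alpha(\frac{1}{4}-\frac{1}{p})} \|\check{\varphi}\|_{MS^p(L^2(\widetilde{G}))}
\]
for some constant $D_0(p) > 0$.

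Next, mimicking the proof of Proposition \ref{prp:tdependence}, I would put $\tau = (\tau_1-\tau_2)/2$ and repeat the construction with $\widetilde{v}^{-1}$ in place of $\widetilde{v}$ (using $\chi''_\alpha$ from the same preceding lemma and the second case of Lemma \ref{lem:circleseqs}) to obtain the analogous estimate involving $-\tau$. Combining the two bounds with the triangle inequality and applying the $MS^p$-analogue of Lemma \ref{lem:cinvariance} to realign the $\widetilde{v}_t$-translates then yields the desired estimate for $\alpha \geq 2$. For $\alpha \in [0,2)$ the trivial bound $\|\varphi\|_\infty \leq \|\check{\varphi}\|_{MS^p(L^2(\widetilde{G}))}$ gives $|\dot{\varphi}(2\alpha,0,\tau_1) - \dot{\varphi}(2\alpha,0,\tau_2)| \leq 2\|\check{\varphi}\|_{MS^p(L^2(\widetilde{G}))}$, which is absorbed by enlarging $D(p)$ appropriately.

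The main obstacle I foresee is the $MS^p$-version of Lemma \ref{lem:cinvariance}, namely that $\varphi_t(g) = \varphi(\widetilde{v}_t g)$ stays in $\mathcal{C}$ and satisfies $\|\check{\varphi}_t\|_{MS^p(L^2(\widetilde{G}))} = \|\check{\varphi}\|_{MS^p(L^2(\widetilde{G}))}$. The class-invariance part carries over verbatim because $\widetilde{v}_t$ is central in $\widetilde{K}$ and commutes with $\widetilde{H}$. The norm identity follows from $\check{\varphi}_t(g_1,g_2) = \check{\varphi}(\widetilde{v}_t^{-1}g_1,g_2)$ together with the standard fact that conjugating the integral-operator representation of the Schur multiplier by the unitary of left-translation by $\widetilde{v}_t^{-1}$ on $L^2(\widetilde{G})$ preserves the $cbMS^p$-norm. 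Once this invariance is in place, every remaining step tracks the proof of Proposition \ref{prp:tdependence} with purely cosmetic bookkeeping of constants.
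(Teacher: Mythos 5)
Your proposal matches the paper's intended proof: the paper itself only remarks that one repeats the argument of Proposition \ref{prp:tdependence}, replacing the H\"older continuity of the Legendre polynomials by the $p$-version of Lemma \ref{lem:behaviorsu2p} with exponent $\frac{1}{4}-\frac{1}{p}$, which together with $|r|\leq 5e^{-2\alpha}$ gives exactly the decay rate $e^{-2\alpha(\frac{1}{4}-\frac{1}{p})}$; you supply the same steps and in fact more detail (the small-$\alpha$ case and the translation-invariance of the $MS^p$-norm) than the paper records. The only slip is cosmetic: the correct identity is $\check{\varphi}_t(g_1,g_2)=\check{\varphi}(g_1\widetilde{v}_t^{-1},g_2)$ (right translation in the first variable) rather than $\check{\varphi}(\widetilde{v}_t^{-1}g_1,g_2)$, but since composing the symbol with a fixed translation in one variable preserves the Schur multiplier norm in either case, the argument is unaffected.
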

The proof of this proposition is similar to the proof of Proposition \ref{prp:tdependence}. One uses the H\"older continuity coming from the Legendre polynomials in the $p$-setting (see Lemma \ref{lem:behaviorsu2p}) rather than the H\"older continuity in the setting of completely bounded Fourier multipliers. In the lemma yielding the above proposition, replace the H\"older continuity accordingly.
\begin{lem} \label{lem:tdependencesp}
  There exists a constant $\tilde{B}(p) > 0$ such that for $\alpha > 0$, $t \in \mathbb{R}$, $\tau \in [-\frac{\pi}{2},\frac{\pi}{2}]$, and $s_1=s_1(2\alpha,0)$ is chosen as in Lemma \ref{lem:betagamma}, then for all $\varphi \in \mathcal{C}$ such that $\check{\varphi} \in MS^p(L^2(\widetilde{G}))$,
\[
  |\dot{\varphi}(2s_1,s_1,t) - \dot{\varphi}(2s_1,s_1,t+\tau)| \leq \tilde{B}(p)e^{-\frac{\alpha}{2}(\frac{1}{4}-\frac{1}{p})} \|\check{\varphi}\|_{MS^p(L^2(\widetilde{G}))}.
\]
\end{lem}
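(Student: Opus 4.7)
The plan is to adapt the argument of Lemma \ref{lem:tdependences} almost verbatim, substituting each appeal to the completely bounded Fourier multiplier norm of $\varphi$ by its Schatten-class analogue. Concretely, the role of the H\"older $\tfrac{1}{2}$-continuity of Legendre polynomials from Lemma \ref{lem:legendreestimates} in the AP-proof is now played by the $p$-dependent H\"older exponent $\tfrac{1}{4} - \tfrac{1}{p}$ supplied by Lemma \ref{lem:behaviorsu2p}, and Proposition \ref{prp:tdependence} is replaced throughout by Proposition \ref{prp:tdependencep}.

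More specifically, fix $\varphi \in \mathcal{C}$ with $\check{\varphi} \in MS^p(L^2(\widetilde{G}))$ and, assuming first that $\alpha$ is larger than an explicit threshold $\alpha_0$ (depending on $p$), construct $h_1, h_2 \in H$ exactly as in the proof of Lemma \ref{lem:tdependences}: $h_1$ is chosen so that $r_1 = a_1^2-b_1^2 = 0$, and $h_2$ so that $r_2 = a_2^2 - b_2^2 = 2\sinh(2s_1)\sinh s_1/(\sinh^2(2s_1) + \sinh^2 s_1)$, which satisfies $r_2 \leq 2e^{-s_1} \leq 2e^{-\alpha/2}$ by Lemma \ref{lem:betagamma}. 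Lemma \ref{lem:circleseqs} then places $\widetilde{D}(\alpha,\alpha)\widetilde{v}\widetilde{h}_i\widetilde{D}(\alpha,\alpha)$ into $S_{2\alpha,0,t''}$ and $S_{2s_1,s_1,t'}$ respectively, with $|t'-t''| \leq \pi/2$ from the explicit expressions.

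The function $\chi_\alpha'(h) = \varphi(\widetilde{D}(\alpha,\alpha)\widetilde{v}\widetilde{h}\widetilde{D}(\alpha,\alpha))$ on $H$ is $H_1$-bi-invariant and satisfies $\|\check{\chi}_\alpha'\|_{MS^p(L^2(H))} \leq \|\check{\varphi}\|_{MS^p(L^2(\widetilde{G}))}$ by the restriction lemma preceding Proposition \ref{prp:tdependencep}. Applying Lemma \ref{lem:behaviorsu2p} to (the induced function of) $\chi_\alpha'$, valid once $\alpha_0$ is chosen so that $r_2 \leq \tfrac{1}{2}$, yields
\[
  |\dot{\varphi}(2s_1,s_1,t') - \dot{\varphi}(2\alpha,0,t'')| \leq \hat{C}(p)\, r_2^{\frac{1}{4}-\frac{1}{p}}\, \|\check{\varphi}\|_{MS^p(L^2(\widetilde{G}))} \leq \hat{C}(p)\, 2^{\frac{1}{4}-\frac{1}{p}}\, e^{-\frac{\alpha}{2}\bigl(\frac{1}{4}-\frac{1}{p}\bigr)} \|\check{\varphi}\|_{MS^p(L^2(\widetilde{G}))}.
\]
To pass from this to the desired $t$-continuity bound I use the $\widetilde{v}_t$-invariance of $\mathcal{C}$ (Lemma \ref{lem:cinvariance}) to reduce the problem to estimating $|\dot{\varphi}(2s_1,s_1,t') - \dot{\varphi}(2s_1,s_1,t'+\tau)|$, and then telescope through the two intermediate values $\dot{\varphi}(2\alpha,0,t'')$ and $\dot{\varphi}(2\alpha,0,t''+\tau)$. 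The two outer differences are controlled by the display above (applied before and after translation by $\widetilde{v}_\tau$, which preserves $\|\check{\varphi}\|_{MS^p}$), while the central difference is controlled by Proposition \ref{prp:tdependencep} and decays like $e^{-2\alpha(\frac{1}{4}-\frac{1}{p})}$. Since $\frac{\alpha}{2}(\frac{1}{4}-\frac{1}{p}) \leq 2\alpha(\frac{1}{4}-\frac{1}{p})$, the outer terms dominate and we obtain the claimed bound for $\alpha > \alpha_0$. For $\alpha \leq \alpha_0$ the trivial estimate $|\dot{\varphi}(\cdot)| \leq \|\varphi\|_\infty \leq \|\check{\varphi}\|_{MS^p(L^2(\widetilde{G}))}$ gives a constant bound, which can be absorbed into a larger $\tilde{B}(p)$.

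I expect no genuine obstacle here beyond careful bookkeeping. The one mildly delicate point is verifying the domain restriction $r_1, r_2 \in [-\tfrac{1}{2}, \tfrac{1}{2}]$ required by Lemma \ref{lem:behaviorsu2p}, which fixes the threshold $\alpha_0$ as an explicit function of $p$; this is where the exponent $\tfrac{1}{4}-\tfrac{1}{p}$ (and thus the hypothesis $p > 4$, automatic in our target range $p > 12$) is used, and where the constant $\tilde{B}(p)$ is assembled from $\hat{C}(p)$, $D(p)$ and the trivial bound on the small-$\alpha$ regime.
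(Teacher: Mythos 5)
Your proposal is correct and is precisely the argument the paper intends: the lemma is stated without an explicit proof, with the surrounding text indicating that one repeats the proof of Lemma \ref{lem:tdependences} verbatim, replacing the H\"older $\tfrac{1}{2}$-estimate for Legendre polynomials by the exponent $\tfrac{1}{4}-\tfrac{1}{p}$ from Lemma \ref{lem:behaviorsu2p} and Proposition \ref{prp:tdependence} by Proposition \ref{prp:tdependencep}, which is exactly what you do (including the bound $r_2 \leq 2e^{-\alpha/2}$, the three-term telescoping, and the absorption of the small-$\alpha$ regime into $\tilde{B}(p)$).
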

The following two lemmas replace Lemmas \ref{lem:betagammacir} and \ref{lem:betagammahyp}.
\begin{lem} \label{lem:betagammacirp}
  For $p > 4$, there exists a constant $B_1(p) > 0$ (depending only on $p$) such that whenever $\beta \geq \gamma \geq 0$ and $s_1=s_1(\beta,\gamma)$ is chosen as in Lemma \ref{lem:betagamma}, then for all $\varphi \in \mathcal{C}$ for which $\check{\varphi} \in MS^p(L^2(\widetilde{G}))$,
\begin{equation} \nonumber
  |\dot{\varphi}(\beta,\gamma,t)-\dot{\varphi}(2s_1,s_1,t)| \leq B_1(p) e^{-\frac{\beta-\gamma}{4}(\frac{1}{4}-\frac{1}{p})} \|\check{\varphi}\|_{MS^p(L^2(\widetilde{G}))}.
\end{equation}
\end{lem}
\begin{lem} \label{lem:betagammahypp}
  For $p > 12$, there exists a constant $B_2(p) > 0$ (depending only on $p$) such that whenever $\beta \geq \gamma \geq 0$ and $s_2=s_2(\beta,\gamma)$ is chosen as in Lemma \ref{lem:betagamma}, then for all $\varphi \in \mathcal{C}$ for which $\check{\varphi} \in MS^p(L^2(\widetilde{G}))$,
\begin{equation} \nonumber
  |\dot{\varphi}(\beta,\gamma,t)-\dot{\varphi}(2s_2,s_2,t)| \leq B_2(p)e^{-\frac{\gamma}{4}(\frac{1}{4}-\frac{3}{p})}\|\check{\varphi}\|_{MS^p(L^2(\widetilde{G}))}.
\end{equation}
\end{lem}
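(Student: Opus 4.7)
The plan is to establish Lemma \ref{lem:betagammahypp} as the $S^p$-analogue of Lemma \ref{lem:betagammahyp}, running the same geometric argument with three structural substitutions: Proposition \ref{prp:psihyp} is replaced by Lemma \ref{lem:fromGtoKp1}; the H\"older exponent $\tfrac14$ for disc polynomials provided by Lemma \ref{lem:hoeldersu2u1} is replaced by the weaker $S^p$ exponent $\tfrac18-\tfrac{3}{2p}$ of Lemma \ref{lem:behavioru2p}; and the $t$-drift estimate Lemma \ref{lem:tdependences} is replaced by its $S^p$-counterpart Lemma \ref{lem:tdependencesp}. The hypothesis $p>12$ enters precisely as the threshold at which the exponent $\tfrac18-\tfrac{3}{2p}$ becomes positive.

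First I would fix $\beta\geq \gamma\geq 2$ and $t\in\mathbb{R}$, and choose $\alpha>0$ as the unique solution of $\sinh\beta\sinh\gamma=\tfrac12\sinh^2\alpha$. Exactly as in the proof of Lemma \ref{lem:betagammahyp}, set
\[
  a_1=\frac{\sinh\beta-\sinh\gamma}{\sinh(2\alpha)},\qquad a_2=\frac{\sinh(2s_2)-\sinh s_2}{\sinh(2\alpha)},\qquad b_j=\sqrt{\tfrac12-a_j^2},
\]
and let $h_j\in H$ satisfy
\[
  \iota(h_j)=\begin{pmatrix} a_j+ib_j & -\tfrac{1}{\sqrt 2}\\[2pt] \tfrac{1}{\sqrt 2} & a_j-ib_j \end{pmatrix}.
\]
By Lemma \ref{lem:hyperbolaseqs}, $\widetilde D(\alpha,0)\widetilde h_1\widetilde D(\alpha,0)\in S_{\beta,\gamma,t'}$ and $\widetilde D(\alpha,0)\widetilde h_2\widetilde D(\alpha,0)\in S_{2s_2,s_2,t''}$ for certain $t',t''\in\mathbb{R}$; the explicit formula for the $t$-parameter in that lemma, together with the fact that its argument lies in $[-\tfrac{\pi}{2},\tfrac{\pi}{2}]$, yields $|t'-t''|\leq \tfrac{\pi}{2}$. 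Setting $\theta_j=\arg(a_j+ib_j)$, the same elementary chain used in Lemma \ref{lem:betagammahyp} (bounding $a_j$ by $1/(4\sinh\gamma)$ and $1/(4\sinh s_2)$, and using $\sinh(\gamma/2)\geq \tfrac14 e^{\gamma/2}$ for $\gamma\geq 2$ together with $s_2\geq\gamma/2$) produces $|\theta_1-\theta_2|\leq e^{-\gamma/2}$.

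Next, apply Lemma \ref{lem:fromGtoKp1} to the map $\psi_\alpha(h):=\varphi(\widetilde D(\alpha,0)\widetilde h\widetilde D(\alpha,0))$: it lies in $C(H//H_0)$ with $\|\check\psi_\alpha\|_{MS^p(L^2(H))}\leq \|\check\varphi\|_{MS^p(L^2(\widetilde G))}$. Identifying $H$ with $\SU(2)$ and $H_0$ with $\U(1)$, Lemma \ref{lem:behavioru2p} then yields
\[
  |\psi_\alpha(h_1)-\psi_\alpha(h_2)|\leq \tilde C(p)\,\|\check\varphi\|_{MS^p(L^2(\widetilde G))}\,|\theta_1-\theta_2|^{\frac{1}{8}-\frac{3}{2p}}.
\]
Inserting $|\theta_1-\theta_2|\leq e^{-\gamma/2}$ and observing the identity $\tfrac12(\tfrac18-\tfrac{3}{2p})=\tfrac14(\tfrac14-\tfrac{3}{p})$, this produces
\[
  |\dot\varphi(\beta,\gamma,t')-\dot\varphi(2s_2,s_2,t'')|\leq \tilde C(p)\,e^{-\frac{\gamma}{4}(\frac14-\frac{3}{p})}\|\check\varphi\|_{MS^p(L^2(\widetilde G))},
\]
which is the target estimate apart from the mismatch between $t'$ and $t''$.

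To close that gap, apply Lemma \ref{lem:tdependencesp} at $(2s_2,s_2,\cdot)$; since $|t'-t''|\leq \tfrac{\pi}{2}$, it gives $|\dot\varphi(2s_2,s_2,t')-\dot\varphi(2s_2,s_2,t'')|\leq \tilde B(p)\,e^{-\frac{\alpha}{2}(\frac14-\frac{1}{p})}\|\check\varphi\|_{MS^p(L^2(\widetilde G))}$. The defining relation $\sinh^2\alpha=2\sinh\beta\sinh\gamma\geq 2\sinh^2\gamma$ gives $\alpha\geq \gamma$ up to an irrelevant additive constant, and the elementary inequality $\tfrac12(\tfrac14-\tfrac{1}{p})\geq \tfrac14(\tfrac14-\tfrac{3}{p})$ (valid for all $p>0$) shows that this auxiliary term is dominated by the principal one. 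Translating $t$ via the invariance of $\mathcal C$ from Lemma \ref{lem:cinvariance} transfers the estimate from $t'$ to arbitrary $t\in\mathbb{R}$, and the remaining case $0\leq\gamma<2$ is absorbed into $B_2(p)$ using the trivial bound $\|\varphi\|_\infty\leq \|\check\varphi\|_{MS^p(L^2(\widetilde G))}$. The main delicate point of the argument is the double bookkeeping of two distinct $p$-dependent exponents, ensuring that the $t$-drift correction from Lemma \ref{lem:tdependencesp} does not dominate the principal term, and the hard constraint that the exponent $\tfrac18-\tfrac{3}{2p}$ from Lemma \ref{lem:behavioru2p} is strictly positive is exactly what pins the range to $p>12$.
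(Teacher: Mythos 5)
Your proof is correct and is exactly the adaptation the paper intends: the paper states this lemma without proof, indicating only that it ``replaces'' Lemma \ref{lem:betagammahyp}, and your three substitutions (Lemma \ref{lem:fromGtoKp1} for Proposition \ref{prp:psihyp}, the H\"older exponent $\tfrac{1}{8}-\tfrac{3}{2p}$ of Lemma \ref{lem:behavioru2p} for the exponent $\tfrac{1}{4}$, and Lemma \ref{lem:tdependencesp} for Lemma \ref{lem:tdependences}), together with the identity $\tfrac{1}{2}(\tfrac{1}{8}-\tfrac{3}{2p})=\tfrac{1}{4}(\tfrac{1}{4}-\tfrac{3}{p})$, reproduce that argument faithfully. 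Two small points to tidy: $|t'-t''|\leq\tfrac{\pi}{2}$ requires noting that $t'$ and $t''$ have the same sign (both arguments of $\tan^{-1}$ are nonnegative), not merely that each lies in $(-\tfrac{\pi}{2},\tfrac{\pi}{2})$; and Lemma \ref{lem:tdependencesp} at $(2s_2,s_2)$ should formally be invoked with the $\alpha''$ determined by $\sinh^2(2\alpha'')=\sinh^2(2s_2)+\sinh^2 s_2$ (which is still $\geq s_2\geq\gamma/2$), rather than your hyperbola $\alpha$ --- neither affects the final estimate.
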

The following lemma follows in a similar way from the previous two lemmas as Lemma \ref{lem:comparest} follows from Lemmas \ref{lem:betagammacir} and \ref{lem:betagammahyp}.
\begin{lem} \label{lem:comparestp}
  For all $p > 12$, there exists a constant $B_3(p) > 0$ such that whenever $s_1,s_2 \geq 0$ satisfy $2 \leq s_2 \leq s_1 \leq \frac{6}{5}s_2$, then for all $\varphi \in \mathcal{C}$ for which $\check{\varphi} \in MS^p(L^2(\widetilde{G}))$ and for all $t \in \mathbb{R}$,
\[
  |\dot{\varphi}(2s_1,s_1,t)-\dot{\varphi}(2s_2,s_2,t)| \leq B_3(p) e^{-\frac{s_1}{8}(\frac{1}{4}-\frac{3}{p})} \|\check{\varphi}\|_{MS^p(L^2(\widetilde{G}))}.
\]
\end{lem}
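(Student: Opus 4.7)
My plan is to mimic the proof of Lemma~\ref{lem:comparest}, simply substituting the $S^p$-versions (Lemmas~\ref{lem:betagammacirp} and~\ref{lem:betagammahypp}) for the completely bounded Fourier multiplier estimates (Lemmas~\ref{lem:betagammacir} and~\ref{lem:betagammahyp}). The arithmetic of the exponents is the only thing that changes, and everything else is essentially formal.

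First, given $s_1,s_2$ with $2\leq s_2\leq s_1\leq\frac{6}{5}s_2$, I would invoke Lemma~\ref{lem:rhosigma} to obtain the unique $\beta\geq\gamma\geq0$ satisfying the system there. The hypothesis $s_1\leq\frac{6}{5}s_2$ is strictly stronger than the $s_1\leq\frac{3}{2}s_2$ needed for~\eqref{eq:system1}, so \eqref{eq:system1} applies and yields $|\beta-2s_1|\leq 1$ and $|\gamma+2s_1-3s_2|\leq 1$. Insert $\dot{\varphi}(\beta,\gamma,t)$ via the triangle inequality,
\[
  |\dot{\varphi}(2s_1,s_1,t)-\dot{\varphi}(2s_2,s_2,t)|\leq|\dot{\varphi}(2s_1,s_1,t)-\dot{\varphi}(\beta,\gamma,t)|+|\dot{\varphi}(\beta,\gamma,t)-\dot{\varphi}(2s_2,s_2,t)|,
\]
and bound the two summands by Lemmas~\ref{lem:betagammacirp} and~\ref{lem:betagammahypp} respectively, obtaining the factors $B_1(p)e^{-\frac{\beta-\gamma}{4}(\frac{1}{4}-\frac{1}{p})}$ and $B_2(p)e^{-\frac{\gamma}{4}(\frac{1}{4}-\frac{3}{p})}$ multiplying $\|\check{\varphi}\|_{MS^p(L^2(\widetilde{G}))}$.

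Next, I would estimate $\beta-\gamma$ and $\gamma$ in terms of $s_1$. Exactly as in the proof of Lemma~\ref{lem:comparest}, using $s_2\leq s_1$ one gets $\beta-\gamma\geq 4s_1-3s_2-2\geq s_1-2$, and using the reverse bound $s_2\geq\frac{5}{6}s_1$ (which is where the stricter hypothesis $s_1\leq\frac{6}{5}s_2$ enters) one gets $\gamma\geq 3s_2-2s_1-1\geq\frac{s_1-2}{2}$. Substituting these gives the two exponents $\frac{s_1-2}{4}(\frac{1}{4}-\frac{1}{p})$ and $\frac{s_1-2}{8}(\frac{1}{4}-\frac{3}{p})$, both of which are positive precisely when $p>12$. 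The latter is the smaller of the two (since $\frac{1}{16}-\frac{1}{4p}>\frac{1}{32}-\frac{3}{8p}$ for every $p>0$), hence determines the dominant term. Absorbing the harmless factor $e^{\frac{1}{4}(\frac{1}{4}-\frac{3}{p})}\leq e^{1/16}$ into the constant yields the claimed estimate with $B_3(p)=e^{\frac{1}{4}(\frac{1}{4}-\frac{3}{p})}(B_1(p)+B_2(p))$.

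There is no serious obstacle in this plan; it is a routine adaptation once Lemmas~\ref{lem:betagammacirp} and~\ref{lem:betagammahypp} are in hand. The only substantive point is the requirement $p>12$, which is forced by Lemma~\ref{lem:betagammahypp} (needed so that $\frac{1}{4}-\frac{3}{p}>0$) and which also explains why the dominant decay rate in the statement is $e^{-\frac{s_1}{8}(\frac{1}{4}-\frac{3}{p})}$ rather than the better rate one would get from the $(\beta-\gamma)$-term alone.
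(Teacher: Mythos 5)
Your proposal is correct and is exactly the argument the paper intends: the paper gives no separate proof of Lemma~\ref{lem:comparestp}, stating only that it follows from Lemmas~\ref{lem:betagammacirp} and~\ref{lem:betagammahypp} in the same way that Lemma~\ref{lem:comparest} follows from Lemmas~\ref{lem:betagammacir} and~\ref{lem:betagammahyp}, which is precisely the triangle-inequality argument through $\dot{\varphi}(\beta,\gamma,t)$ with $\beta,\gamma$ from Lemma~\ref{lem:rhosigma} that you spell out. The only quibble is that the factor absorbed from the first summand is $e^{\frac{1}{2}(\frac{1}{4}-\frac{1}{p})}$ rather than $e^{\frac{1}{4}(\frac{1}{4}-\frac{3}{p})}$, so your explicit value of $B_3(p)$ is marginally off, but since the lemma only asserts the existence of some constant this is immaterial.
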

The following lemma replaces \ref{lem:limit}.
\begin{lem} \label{lem:limitp}
  For $p > 12$, there exists a constant $B_4(p) > 0$ such that for all $\varphi \in \mathcal{C}$ for which $\check{\varphi} \in MS^p(L^2(\widetilde{G}))$ and for all $t \in \mathbb{R}$, the limit $\tilde{c}_{\varphi}^p(t)=\lim_{s_1 \to \infty} \dot{\varphi}(2s_1,s_1,t)$ exists, and for all $s_2 \geq 0$,
\[
  |\dot{\varphi}(2s_2,s_2,t)-\tilde{c}_{\varphi}^p(t)| \leq B_4(p)e^{-\frac{s_2}{8}(\frac{1}{4}-\frac{3}{p})}\|\check{\varphi}\|_{MS^p(L^2(\widetilde{G}))}.
\]
\end{lem}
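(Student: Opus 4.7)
The plan is to follow the same telescoping strategy as in the proof of Lemma \ref{lem:limit}, with the input estimate from Lemma \ref{lem:comparest} replaced by its $p$-analogue Lemma \ref{lem:comparestp}. Throughout, fix $\varphi \in \mathcal{C}$ with $\check{\varphi} \in MS^p(L^2(\widetilde{G}))$ and $t \in \mathbb{R}$, and write $\mu(p) = \frac{1}{4} - \frac{3}{p}$, which is strictly positive precisely because $p > 12$.

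First I would apply Lemma \ref{lem:comparestp} to the pair $(s_1,s_2) = (u+\kappa, u)$ for $u \geq 5$ and $\kappa \in [0,1]$, noting that the hypothesis $2 \leq u \leq u+\kappa \leq \frac{6}{5} u$ of that lemma holds in this range. This gives the unit-step inequality
\[
  |\dot{\varphi}(2(u+\kappa),u+\kappa,t) - \dot{\varphi}(2u,u,t)| \leq B_3(p)\, e^{-\frac{u}{8}\mu(p)} \|\check{\varphi}\|_{MS^p(L^2(\widetilde{G}))}.
\]
For $s_1 \geq s_2 \geq 5$ I would then write $s_1 = s_2 + n + \delta$ with $n \in \mathbb{Z}_{\geq 0}$ and $\delta \in [0,1)$, and telescope through the intermediate points $s_2, s_2+1, \ldots, s_2+n, s_2+n+\delta$. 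Summing a geometric series yields
\[
  |\dot{\varphi}(2s_1,s_1,t) - \dot{\varphi}(2s_2,s_2,t)| \leq B_3'(p)\, e^{-\frac{s_2}{8}\mu(p)} \|\check{\varphi}\|_{MS^p(L^2(\widetilde{G}))},
\]
where $B_3'(p) = B_3(p)/(1 - e^{-\frac{1}{8}\mu(p)})$. Hence $(\dot{\varphi}(2s,s,t))_{s \geq 5}$ is Cauchy in $\mathbb{C}$, so the limit $\tilde{c}_\varphi^p(t) = \lim_{s \to \infty} \dot{\varphi}(2s,s,t)$ exists, and letting $s_1 \to \infty$ in the telescoped estimate gives the desired bound for all $s_2 \geq 5$.

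To handle the remaining range $0 \leq s_2 < 5$, I would use the trivial estimate $\|\varphi\|_\infty \leq \|\check{\varphi}\|_\infty \leq \|\check{\varphi}\|_{MS^p(L^2(\widetilde{G}))}$ (as recalled at the end of Section \ref{subsec:nclps}) to obtain
\[
  |\dot{\varphi}(2s_2,s_2,t) - \tilde{c}_\varphi^p(t)| \leq 2\|\check{\varphi}\|_{MS^p(L^2(\widetilde{G}))},
\]
and then absorb this into the exponential bound by choosing
\[
  B_4(p) = \max\bigl\{ B_3'(p),\ 2\,e^{\frac{5}{8}\mu(p)} \bigr\}.
\]
The main (and really only) obstacle is bookkeeping: one must check that the exponent $\mu(p) > 0$ (which holds iff $p > 12$, consistent with the hypothesis), that the geometric series $\sum_{j \geq 0} e^{-j\mu(p)/8}$ converges, and that the constants from Lemma \ref{lem:comparestp} are inherited correctly. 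Beyond this, the argument is purely formal and mirrors the proof of Lemma \ref{lem:limit}.
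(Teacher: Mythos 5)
Your argument is correct and is exactly the route the paper takes: Lemma \ref{lem:limitp} is stated as the $p$-analogue of Lemma \ref{lem:limit}, and its proof is the same telescoping/Cauchy-net argument with the unit-step input supplied by Lemma \ref{lem:comparestp} instead of Lemma \ref{lem:comparest}, including the geometric-series constant and the trivial $2\|\check{\varphi}\|_{MS^p}$ bound for $0 \leq s_2 < 5$. Your bookkeeping of the exponent $\frac{1}{8}\left(\frac{1}{4}-\frac{3}{p}\right)$ and of the final constant $B_4(p)$ matches what the paper's scheme produces.
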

\begin{proof}[Proof of Proposition \ref{prp:sp2abapschur}]
Let $\varphi \in \mathcal{C}$ be such that $\check{\varphi} \in MS^p(L^2(\widetilde{G}))$. The proof of the proposition now follows in the same way as the proof of Proposition \ref{prp:sp2ab}. Indeed, assume first $\beta \geq 2\gamma$. Then $\beta - \gamma \geq \frac{\beta}{2}$, and it follows for all $t \in \mathbb{R}$ that
\[
  |\dot{\varphi}(\beta,\gamma,t)-\tilde{c}_{\varphi}^p(t)| \leq (B_1(p)+B_4(p))e^{-\frac{\beta}{32}(\frac{1}{4}-\frac{3}{p})}\|\check{\varphi}\|_{MS^p(L^2(\widetilde{G}))}.
\]
Assume now that $\beta < 2\gamma$. Then
\[
  |\dot{\varphi}(\beta,\gamma,t)-\tilde{c}_{\varphi}^p(t)| \leq (B_2(p)+B_4(p))e^{-\frac{\beta}{32}(\frac{1}{4}-\frac{3}{p})}\|\check{\varphi}\|_{MS^p(L^2(\widetilde{G}))}.
\]
Combining these results, it follows that for all $\beta \geq \gamma \geq 0$,
\[
  |\dot{\varphi}(\beta,\gamma,t)-\tilde{c}_{\varphi}^p(t)| \leq C_1(p)e^{-C_2(p)\sqrt{\beta^2+\gamma^2}}\|\check{\varphi}\|_{MS^p(L^2(\widetilde{G}))},
\]
where $C_1(p)=\max\{B_1(p)+B_4(p),B_2(p)+B_4(p)\}$ and $C_2(p)=\frac{1}{32\sqrt{2}}(\frac{1}{4}-\frac{3}{p})$. This proves the proposition.
\end{proof}
The values $p \in [1,\frac{12}{11}) \cup (12,\infty]$ give sufficient conditions for $\widetilde{G}$ to fail the $\apschur$. We would like to point out that the set of these values might be bigger, as already mentioned in Section \ref{sec:introduction}.

\section{Main results} \label{sec:mainresults}
In this section, we state and prove the main results of this article.
\begin{thm} \label{thm:apctdsimple}
	Let $G$ be a connected simple Lie group. Then $G$ has the Approximation Property if and only if it has real rank zero or one.
\end{thm}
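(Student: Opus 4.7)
The plan is to treat the two directions of the equivalence separately. For the \textbf{if} direction, I would observe that a connected simple Lie group of real rank zero is compact, hence amenable, hence has the AP. For real rank one, the works of Cowling--Haagerup \cite{cowlinghaagerup} and Hansen \cite{hansen} establish weak amenability, which strictly implies the AP as recalled in Section \ref{sec:introduction}. Neither reduction requires anything beyond what the paper already collects.

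For the \textbf{only if} direction, suppose $G$ is a connected simple Lie group with $\mathrm{rank}_{\mathbb{R}}(G) \geq 2$. I would build on the structural argument used in \cite{haagerupdelaat1} for the finite-center case. The classification of real simple Lie algebras shows that every such algebra of real rank at least two contains a subalgebra isomorphic to either $\mathfrak{sl}(3,\bbR)$ or $\mathfrak{sp}(2,\bbR)$. The analytic subgroup of $G$ corresponding to such a (semi)simple subalgebra is closed in $G$ by the standard Malcev--Yamabe theorem on semisimple analytic subgroups, yielding a closed connected subgroup $H \leq G$ locally isomorphic to $\SL(3,\bbR)$ or $\Sp(2,\bbR)$. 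Since the AP passes to closed subgroups by \cite[Proposition 1.14]{haagerupkraus}, it suffices to show that $H$ fails the AP.

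I would then split into cases on the local isomorphism class of $H$. In the $\SL(3,\bbR)$ case, $\pi_1(\SL(3,\bbR)) \cong \mathbb{Z}/2\mathbb{Z}$ is finite, so $H$ automatically has finite center; combining the Lafforgue--de la Salle result \cite{ldls} with the invariance of the AP under local isomorphism for finite-center connected simple Lie groups \cite[Proposition 2.4]{haagerupdelaat1} gives the failure of the AP for $H$. In the $\Sp(2,\bbR)$ case, $\pi_1(\Sp(2,\bbR)) \cong \mathbb{Z}$, so $H$ is either $\widetilde{\Sp}(2,\bbR)$ itself or a finite-center quotient of it. The finite-center subcase is again handled by \cite{haagerupdelaat1} together with \cite[Proposition 2.4]{haagerupdelaat1}; the infinite-center subcase $H \cong \widetilde{\Sp}(2,\bbR)$ is handled directly by Theorem \ref{thm:covsp2noap}. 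This case-by-case analysis exhausts the possibilities for $H$ and yields a contradiction with the assumption that $G$ has the AP.

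The step I expect to be the main obstacle is precisely the $H \cong \widetilde{\Sp}(2,\bbR)$ subcase, since that group has infinite center and therefore falls outside the scope of the local-isomorphism invariance \cite[Proposition 2.4]{haagerupdelaat1}. Overcoming this obstacle is exactly the content of Theorem \ref{thm:covsp2noap}, whose proof occupies Section \ref{sec:covsp2r}; once that theorem is in hand, the deduction of Theorem \ref{thm:apctdsimple} from the previously known pieces is routine bookkeeping, requiring no further analytic input.
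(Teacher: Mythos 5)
Your proposal is correct and follows essentially the same route as the paper: reduce to a closed connected subgroup $H$ locally isomorphic to $\SL(3,\bbR)$ or $\Sp(2,\bbR)$, dispose of the finite-center cases via \cite{ldls}, \cite{haagerupdelaat1} and the local-isomorphism invariance of the AP, and handle the remaining infinite-center case $H \cong \widetilde{\Sp}(2,\bbR)$ by Theorem \ref{thm:covsp2noap}. The only differences are cosmetic (citing Malcev--Yamabe for closedness, and phrasing the $\SL(3,\bbR)$ reduction via $\pi_1$ rather than via the finite center of the universal cover).
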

\begin{proof}
Since it is well-known that if a connected simple Lie group $G$ has real rank zero or one, then $G$ has the AP (see Section \ref{sec:introduction}), it suffices to prove that any connected simple Lie group with real rank greater than or equal to two does not have the AP.

Let $G$ be a connected simple Lie group with real rank greater than or equal to two. Then $G$ has a closed connected subgroup $H$ locally isomorphic to $\mathrm{SL}(3,\mathbb{R})$ or $\mathrm{Sp}(2,\mathbb{R})$ (see, e.g., \cite{boreltits},\cite{dorofaeff},\cite{margulis}).

Firstly, suppose that $H$ is locally isomorphic to $\mathrm{SL}(3,\mathbb{R})$. Since the universal covering $\widetilde{\mathrm{SL}}(3,\mathbb{R})$ has finite center, it follows that $H$ automatically has finite center. Using the fact that the AP is preserved under local isomorphism of connected simple Lie groups with finite center (see Section \ref{subsec:ap}) and the fact that $\mathrm{SL}(3,\mathbb{R})$ does not have the AP, it follows that $G$ does not have the AP, since the AP passes from a group to closed subgroups.

Secondly, suppose that $H$ is locally isomorphic to $\Sp(2,\bbR)$, i.e., $H$ is isomorphic to $\widetilde{\mathrm{Sp}}(2,\mathbb{R}) / \Gamma$, where $\Gamma$ is a discrete subgroup of the center $Z(\widetilde{\mathrm{Sp}}(2,\mathbb{R}))$ of $\widetilde{\mathrm{Sp}}(2,\mathbb{R})$. If $H$ has finite center, then the result follows in the same way as the case $\mathrm{SL}(3,\mathbb{R})$. If $H$ has infinite center, then $H \cong \widetilde{\mathrm{Sp}}(2,\mathbb{R})$, because all nontrivial subgroups of the center of $\widetilde{\mathrm{Sp}}(2,\mathbb{R})$ are infinite subgroups of finite index (which make $H$ have finite center). This implies that $H$ does not have the AP, which finishes the proof.
\end{proof}
Note that the proof of this theorem follows from combining the failure of the AP for $\mathrm{SL}(3,\mathbb{R})$, which was proved by Lafforgue and de la Salle and the failure of the AP for $\mathrm{Sp}(2,\mathbb{R})$ and $\widetilde{\mathrm{Sp}}(2,\mathbb{R})$.
\begin{cor}
  Let $G=S_1 \times \ldots \times S_n$ be a connected semisimple Lie group with connected simple factors $S_i$, $i=1,\ldots,n$. Then $G$ has the AP if and only if for all $i=1,\ldots,n$ the real rank of $S_i$ is smaller than or equal to $1$.
\end{cor}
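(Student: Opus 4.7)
The plan is to prove the corollary by a direct two-way argument, using Theorem \ref{thm:apctdsimple} to reduce to the simple factors and then invoking the standard permanence properties of the AP recalled in Section \ref{subsec:ap}.

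For the forward direction, suppose each $S_i$ has real rank at most one. By Theorem \ref{thm:apctdsimple} every $S_i$ then has the AP. I would now argue by induction on $n$. The case $n=1$ is immediate. For the inductive step, regard $S_n$ as a closed normal subgroup of $G = S_1 \times \ldots \times S_n$, so that $G/S_n \cong S_1 \times \ldots \times S_{n-1}$ has the AP by the induction hypothesis while $S_n$ itself has the AP. Then by the result that an extension of two groups with the AP has the AP (\cite[Theorem 1.15]{haagerupkraus}, recalled in Section \ref{subsec:ap}), $G$ has the AP.

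For the converse direction, suppose some $S_{i_0}$ has real rank greater than or equal to two. By Theorem \ref{thm:apctdsimple}, $S_{i_0}$ does not have the AP. The embedding
\[
  S_{i_0} \hookrightarrow G, \qquad s \mapsto (1,\ldots,1,s,1,\ldots,1)
\]
(with $s$ in the $i_0$-th slot) exhibits $S_{i_0}$ as a closed subgroup of $G$. Since the AP passes to closed subgroups (\cite[Proposition 1.14]{haagerupkraus}, recalled in Section \ref{subsec:ap}), $G$ cannot have the AP, otherwise $S_{i_0}$ would.

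There is no real obstacle here: the only substantive ingredient beyond Theorem \ref{thm:apctdsimple} is the permanence of the AP under taking closed subgroups and under extensions, both of which are quoted results. The one point to be a little careful about is that in the statement we are given $G$ literally as a direct product of the $S_i$, so that each $S_i$ is automatically a closed (normal) subgroup; no subtler analysis of the almost-direct-product structure of a general connected semisimple Lie group is needed.
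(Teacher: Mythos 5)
Your proof is correct and is exactly the argument the paper intends (the corollary is stated without proof, relying on Theorem \ref{thm:apctdsimple} together with the permanence properties recalled in Section \ref{subsec:ap}): induction via the extension result \cite[Theorem 1.15]{haagerupkraus} for one direction, and passage to closed subgroups \cite[Proposition 1.14]{haagerupkraus} for the other. Your closing remark is also on point — since $G$ is given literally as a direct product, each $S_i$ is a closed normal subgroup and no further structure theory is needed.
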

We now state our results on noncommutative $L^p$-spaces. Combining \cite[Theorem E]{ldls} by Lafforgue and de la Salle, \cite[Theorem 3.1]{delaat1} and Theorem \ref{thm:covsp2noapschur} of this article, it follows that whenever $G$ is a connected simple Lie group with real rank greater than or equal to two and whenever $p \in [1,\frac{12}{11}) \cup (12,\infty]$, then $G$ does not have the $\apschur$. Combining this with the fact that the $\apschur$ passes from a group to its lattices and vice versa and the earlier mentioned result of Lafforgue and de la Salle that whenever $\Gamma$ is a discrete group such that $L^p(L(\Gamma))$ has the OAP for $p \in (1,\infty)$, then $\Gamma$ has the $\apschur$, we obtain the following result.
\begin{thm} \label{thm:nclpsmain}
  Let $\Gamma$ be a lattice in a connected simple Lie group with real rank greater than or equal to two. For $p \in [1,\frac{12}{11}) \cup (12,\infty]$, the noncommutative $L^p$-space $L^p(L(\Gamma))$ does not have the OAP or CBAP.
\end{thm}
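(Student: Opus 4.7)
The plan is to assemble Theorem \ref{thm:covsp2noapschur} with previously known results in exactly the same pattern as the proof of Theorem \ref{thm:apctdsimple}, but at the level of the $\apschur$ rather than the AP. The chain of implications I intend to use is: failure of OAP (or CBAP) for $L^p(L(\Gamma))$ is implied by failure of $\apschur$ for $\Gamma$, which is equivalent to failure of $\apschur$ for the ambient Lie group $G$, which in turn follows from failure of $\apschur$ for a closed connected subgroup $H$ of $G$ locally isomorphic to $\SL(3,\bbR)$ or $\Sp(2,\bbR)$.

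First I would dispose of the endpoint cases $p = 1$ and $p = \infty$ separately: for these values, $\apschur$ coincides with weak amenability (see \cite[Proposition 2.3]{ldls}), and Dorofaeff \cite{dorofaeff} showed that every connected simple Lie group of real rank at least two fails weak amenability, a property that transfers to lattices. For the intermediate range $p \in (1,\tfrac{12}{11}) \cup (12,\infty)$, I would invoke the structural result used in Theorem \ref{thm:apctdsimple}: any connected simple Lie group $G$ of real rank $\geq 2$ admits a closed connected subgroup $H$ locally isomorphic to $\SL(3,\bbR)$ or $\Sp(2,\bbR)$. If $H$ is locally isomorphic to $\SL(3,\bbR)$, finiteness of the center of $\widetilde{\SL}(3,\bbR)$ forces $H$ to have finite center, and the failure of $\apschur$ for $H$ on our range of $p$ follows from the original Lafforgue--de la Salle result \cite[Theorem E]{ldls} combined with the preservation of $\apschur$ under local isomorphism of finite-center simple Lie groups in \cite[Theorem 3.1]{delaat1}. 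If $H$ is locally isomorphic to $\Sp(2,\bbR)$, then either $H$ has finite center (and \cite[Theorem 3.1]{delaat1} applies, since the range $[1,\tfrac{12}{11}) \cup (12,\infty]$ is already covered there) or $H \cong \widetilde{\Sp}(2,\bbR)$, which is exactly the case handled by Theorem \ref{thm:covsp2noapschur} of the present paper.

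Having the failure of $\apschur$ for the subgroup $H$, I would transfer it to $G$ using the hereditary property of $\apschur$ under passage to closed subgroups (this is the analogue of \cite[Proposition 1.14]{haagerupkraus} and is already implicit in the arguments of \cite[Section 2]{ldls}), and then to the lattice $\Gamma$ via the equality $\Lambda^{\mathrm{Schur}}_{p,\cb}(\Gamma) = \Lambda^{\mathrm{Schur}}_{p,\cb}(G)$ from \cite[Theorem 2.5]{ldls}. Finally, the implication \cite[Corollary 3.13]{ldls}, which says that for $p \in (1,\infty)$ the OAP of $L^p(L(\Gamma))$ forces $\lambdaapschur(\Gamma) = 1$, delivers the conclusion by contraposition; the CBAP statement is automatic because the CBAP implies the OAP by definition.

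Essentially all of the analytic difficulty of Theorem \ref{thm:nclpsmain} has been absorbed into Theorem \ref{thm:covsp2noapschur}, so the remaining proof is a bookkeeping combination of imports. The only conceptual subtlety is the treatment of the infinite-center case, where $H \cong \widetilde{\Sp}(2,\bbR)$: this possibility was not accessible to \cite{ldls} or \cite{delaat1}, and is precisely the gap that Theorem \ref{thm:covsp2noapschur} is designed to fill. No further obstacle is expected in the assembly itself.
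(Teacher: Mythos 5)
Your proposal is correct and follows essentially the same route as the paper: the paper likewise combines \cite[Theorem E]{ldls}, \cite[Theorem 3.1]{delaat1} and Theorem \ref{thm:covsp2noapschur} (via the standard subgroup reduction to $\SL(3,\bbR)$ and $\Sp(2,\bbR)$, with $\widetilde{\Sp}(2,\bbR)$ being the new case) to get failure of $\apschur$ for $G$, then transfers to $\Gamma$ by \cite[Theorem 2.5]{ldls} and concludes by the contrapositive of \cite[Corollary 3.13]{ldls}. Your explicit handling of the endpoints $p=1,\infty$ via weak amenability matches the remark the paper makes in Section 4.
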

Note that this result only gives sufficient conditions on the value of $p$ for the failure of the CBAP and OAP for noncommutative $L^p$-spaces associated with lattices in connected higher rank simple Lie groups. The set of such $p$-values might be bigger than $[1,\frac{12}{11}) \cup (12,\infty]$. In particular, if we consider $L^p(L(\Gamma))$, where $\Gamma$ is a lattice in a connected simple Lie group that contains a closed subgroup locally isomorphic to $\mathrm{SL}(3,\mathbb{R})$, then we know by the results of Lafforgue and de la Salle that the CBAP and OAP for $L^p(L(\Gamma))$ fail for $p \in [1,\frac{4}{3}) \cup (4,\infty]$.

\begin{appendix}

\section{Harmonic analysis on strong Gelfand pairs} \label{sec:sgp}
This appendix discusses the analogues of spherical functions in the setting of strong Gelfand pairs. In particular, we explain their relation to spherical functions for Gelfand pairs and their meaning in representation theory. The material discussed here is not needed for the rest of this article, but might give a deeper understanding of certain results proved in Sections \ref{sec:covsp2r} and \ref{sec:nclpspaces} (see in particular Lemma \ref{lem:hoeldersu2u1}). The main result of this section, Theorem \ref{thm:bijectivecorrespondencesphericalsspherical}, might be known to experts, and special cases of it were considered in \cite{flenstedjensen}, but we could not find a reference for the general statement. The content of this appendix arose from discussions between the second named author and Thomas Danielsen.

The definitions of Gelfand pairs, spherical functions and strong Gelfand pairs were given in Section \ref{subsec:gelfandpairs}. It was pointed out there (and it is elementary to prove) that a pair $(G,K)$ consisting of a locally compact group $G$ and a compact subgroup $K$ is a strong Gelfand pair if and only if $(G\times K, \Delta K)$ (where $\Delta K$ is the diagonal subgroup) is a Gelfand pair. We refer to \cite{vandijk} and \cite{faraut} for a thorough account of the theory of Gelfand pairs.

Suppose that $G$ is a locally compact group with compact subgroup $K$. An equivalent definition of spherical functions (see \cite{vandijk},\cite{faraut} for a proof of the equivalence) is that for a Gelfand pair $(G,K)$, a function $h \in C(K \backslash G / K)$ that is not identical to zero is spherical if for all $g_1,g_2 \in G$ we have $\int_K h(g_1kg_2)dk = h(g_1)h(g_2)$. We denote the set of spherical functions by $S(G,K)$. Spherical functions parametrize the nontrivial characters (multiplicative linear functionals) of the algebra $C_c(K\backslash G / K)$, since any such character is of the form $\chi(\varphi)=\chi_h(\varphi)=\int_G \varphi(g)h(g^{-1})dg$. Furthermore, if $h$ is a bounded spherical function, the expression above defines a continuous multiplicative functional on the Banach algebra $L^1(K\backslash G / K)$, and the set $BS(G,K)$ of bounded spherical functions parametrizes bijectively the set of continuous characters of $L^1(K\backslash G / K)$.

We can now define the analogues of spherical functions in the setting of strong Gelfand pairs. For a strong Gelfand pair $(G,K)$, we say that a function $h \in C(G // K)$ that is not identical to zero is s-spherical if for all $g_1,g_2 \in G$ we have $\int_K h(k^{-1}g_1kg_2)dk=h(g_1)h(g_2)$. The set of s-spherical functions is denoted by $SS(G,K)$. Analogous to the case of spherical functions, the s-spherical functions parametrise the space of nontrivial characters of the convolution algebra $C_c(G//K)$, since an s-spherical function $h$ gives rise to a character by $\chi_h(\varphi):=\int_G \varphi(g)h(g^{-1})dg$.

It is clear that $S(G,K)\subset SS(G,K)$. We can now relate the spaces of s-spherical functions for $(G,K)$ and spherical functions for $(G \times K,\Delta K)$. First, we state a lemma, the proof of which is elementary and left to the reader.
\begin{lem} \label{lem:bijectivecorrespondence}
The map $\Phi:\Delta K\backslash (G\times K) /\Delta K\to G//K$ given by $\Delta K (g,k) \Delta K \mapsto [k^{-1}g]=[gk^{-1}]$ is a homeomorphism with inverse $\Phi^{-1}([g])=\Delta K (g,e) \Delta K$. Here, $[g]$ denotes the $K$-conjugation class of $g$.
\end{lem}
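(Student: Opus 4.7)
The plan is to verify four things in order: that $\Phi$ is well-defined on double cosets, that the candidate $\Phi^{-1}$ is well-defined on $K$-conjugation classes, that $\Phi$ and $\Phi^{-1}$ are mutual inverses, and that both maps are continuous. Each step reduces to a short algebraic manipulation or an application of the universal property of the quotient topology; no deeper fact about strong Gelfand pairs is required for this lemma.

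For well-definedness of $\Phi$, the general representative of $\Delta K(g,k)\Delta K$ is $(k_1 g k_2,\, k_1 k k_2)$ with $k_1,k_2\in K$. Computing $(k_1 k k_2)^{-1}(k_1 g k_2) = k_2^{-1}(k^{-1}g)k_2$ shows that the output lies in the same $K$-conjugation class as $k^{-1}g$, so $[k^{-1}g]\in G//K$ is independent of the chosen representative. The identity $[k^{-1}g]=[gk^{-1}]$ is then the observation $k(k^{-1}g)k^{-1}=gk^{-1}$. For the candidate inverse, if $g_2 = kg_1 k^{-1}$ for some $k\in K$, then $(g_2,e) = (k,k)(g_1,e)(k^{-1},k^{-1})$, so $(g_1,e)$ and $(g_2,e)$ lie in the same $\Delta K$-double coset; hence the assignment $[g]\mapsto \Delta K(g,e)\Delta K$ is well-defined.

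The inverse identities are then immediate. One direction is $\Phi(\Phi^{-1}([g])) = \Phi(\Delta K(g,e)\Delta K) = [e^{-1}g] = [g]$. For the other direction, the factorisation $(g,k) = (k,k)(k^{-1}g,e)$ gives $\Delta K(g,k)\Delta K = \Delta K(k^{-1}g,e)\Delta K = \Phi^{-1}([k^{-1}g]) = \Phi^{-1}(\Phi(\Delta K(g,k)\Delta K))$. For continuity, the map $G\times K\to G$, $(g,k)\mapsto k^{-1}g$, is continuous, so composing with the quotient map $G\to G//K$ yields a continuous map that is constant on $\Delta K$-double cosets by the well-definedness argument; by the universal property of the quotient topology it descends to a continuous $\Phi$. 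Symmetrically, $g\mapsto \Delta K(g,e)\Delta K$ is continuous and constant on $K$-conjugation classes, so it descends to a continuous $\Phi^{-1}$. The only point requiring care is to keep the left and right actions in the double-coset manipulations straight; beyond this bookkeeping there is no genuine obstacle.
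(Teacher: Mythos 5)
Your proof is correct, and it supplies exactly the elementary verification that the paper omits (the paper states this lemma with ``the proof of which is elementary and left to the reader''). All four steps --- well-definedness of both maps, the mutual-inverse identities via the factorisation $(g,k)=(k,k)(k^{-1}g,e)$, and continuity via the universal property of the quotient topology --- check out.
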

The map $\Phi$ of Lemma \ref{lem:bijectivecorrespondence} induces a bijection $\Phi^*:C(G//K) \rightarrow C(K\backslash G / K)$ given by $f\mapsto f\circ \Phi$.
\begin{thm} \label{thm:bijectivecorrespondencesphericalsspherical}
The map $\Phi^*:C(G//K) \rightarrow C(K\backslash G / K)$ given by $f\mapsto f\circ \Phi$ defines a bijection between $SS(G,K)$ and $S(G\times K,\Delta K)$.
\end{thm}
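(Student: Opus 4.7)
The plan is to exploit the homeomorphism $\Phi$ of Lemma \ref{lem:bijectivecorrespondence} to transfer the functional equation for $s$-spherical functions on $(G,K)$ to the functional equation for spherical functions on $(G\times K, \Delta K)$. Since Lemma \ref{lem:bijectivecorrespondence} already provides a bijection $\Phi^{*}:C(G//K)\to C(\Delta K\backslash(G\times K)/\Delta K)$, the only content is to check that $\Phi^{*}$ restricts to a bijection between the sets of nonzero solutions of the two integral equations. (Note that the theorem statement writes $C(K\backslash G/K)$ but the natural target, as per Lemma \ref{lem:bijectivecorrespondence}, is $C(\Delta K\backslash(G\times K)/\Delta K)$.) Concretely, for $f\in C(G//K)$ I would work with the explicit formula $(\Phi^{*}f)(g,k)=f(k^{-1}g)$.

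For the forward direction, assume $f\in SS(G,K)$ and set $h=\Phi^{*}f$. I would unpack the defining integral for sphericality of $h$ on $(G\times K,\Delta K)$:
\[
\int_K h(g_1 k g_2,\, k_1 k k_2)\,dk \;=\; \int_K f(k_2^{-1}k^{-1}k_1^{-1}g_1 k g_2)\,dk.
\]
Using $\mathrm{Int}(K)$-invariance of $f$ to conjugate the integrand by $k_2$, the right-hand side becomes $\int_K f\bigl(k^{-1}(k_1^{-1}g_1)\,k\,(g_2 k_2^{-1})\bigr)\,dk$, which by the $s$-spherical equation for $f$ equals $f(k_1^{-1}g_1)\,f(g_2 k_2^{-1})$. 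Another application of $\mathrm{Int}(K)$-invariance gives $f(g_2 k_2^{-1})=f(k_2^{-1}g_2)$, so the product is $h(g_1,k_1)\,h(g_2,k_2)$, as required.

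For the converse, given $h\in S(G\times K,\Delta K)$ and $f:=\Phi^{*-1}h$, one recovers $f(g)=h(g,e)$ and $f$ is automatically $\mathrm{Int}(K)$-invariant because $(k,k)(g,e)(k^{-1},k^{-1})=(kgk^{-1},e)$. Specialising the spherical equation for $h$ to $k_1=k_2=e$ gives
\[
\int_K h(g_1 k g_2,\,k)\,dk \;=\; h(g_1,e)\,h(g_2,e) \;=\; f(g_1)\,f(g_2),
\]
and the $\Delta K$-bi-invariance of $h$ rewrites $h(g_1 k g_2,k)=h(k^{-1}g_1 k g_2,e)=f(k^{-1}g_1 k g_2)$, yielding the $s$-spherical identity for $f$.

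The calculations are elementary; the only point that requires care is tracking the two distinct $K$-actions (left-translation and conjugation) and verifying that the $\Delta K$-bi-invariance of $h$ precisely encodes both the $\mathrm{Int}(K)$-invariance of $f$ and the reduction of $(g,k)\mapsto k^{-1}g$ to a class function on $G//K$. Nonvanishing is preserved trivially since $\Phi^{*}$ is a bijection of function spaces. Combining the two directions yields the bijection $SS(G,K)\leftrightarrow S(G\times K,\Delta K)$ asserted by the theorem.
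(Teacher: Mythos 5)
Your proof is correct and follows essentially the same route as the paper's: both verify directly that $\Phi^{*}$ and its inverse exchange the two functional equations, using $\mathrm{Int}(K)$-invariance of $f$ and $\Delta K$-bi-invariance of $h$ at exactly the same points in the computation. Your parenthetical remark that the codomain should read $C(\Delta K\backslash(G\times K)/\Delta K)$ rather than $C(K\backslash G/K)$ is a correct observation about a typo in the statement.
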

\begin{proof}
For $h \in SS(G,K)$, we have $(h \circ \Phi)((k_1,k_1)(g,k)(k_2,k_2))=h(k_2^{-1}k^{-1}gk_2)=h(k^{-1}g)=(h \circ \Phi)((g,k))$ for all $g \in G$ and $k,k_1,k_2 \in K$, so $h \circ \Phi$ is $\Delta K$-bi-invariant on $G \times K$. Moreover, we check that for $h \circ \Phi$ (which is not identical to the zero function), we have
\begin{equation} \nonumber
\begin{split}
  \int_K (h \circ \Phi)((g_1,k_1)(k,k)(g_2,k_2))dk = \int_K h(k_2^{-1}k^{-1}k_1^{-1}g_1kg_2)dk \\
    = \int_K h(k^{-1}k_1^{-1}g_1kg_2k_2^{-1})dk = (h \circ \Phi)(g_1,k_1)(h \circ \Phi)(g_2,k_2)
\end{split}
\end{equation}
for all $g_1,g_2 \in G$ and $k,k_1,k_2 \in K$. Let now $h \in S(G,K)$. It follows that $(h \circ \Phi^{-1})(kgk^{-1})=h(kg,k)=h(g,e)=(h \circ \Phi^{-1})(g)$ for all $g \in G$ and $k \in K$, so $h \circ \Phi^{-1}$ is $\mathrm{Int}(K)$-invariant on $G$. Moreover, we check that $\varphi \circ \Phi^{-1}$ (which is not identical to the zero function) satisfies
\begin{equation} \nonumber
\begin{split}
  \int_K (\varphi \circ \Phi^{-1})(k^{-1}g_1kg_2)dk = \int_K \varphi((g_1,e)(k,k)(g_2,e))dk \\
    = \varphi((g_1,e))\varphi((g_2,e)) = (\varphi \circ \Phi^{-1})(g_1)(\varphi \circ \Phi^{-1})(g_2)
\end{split}
\end{equation}
for all $g_1,g_2 \in G$.
\end{proof}
\begin{rmk} \label{rmk:isometry}
For a compact group $G$ with compact subgroup $K$ such that $(G,K)$ is a strong Gelfand pair, the map $\Phi^*$ extends to a bijective isometry from $L^2(G // K)$ onto $L^2(\Delta K \backslash G \times K / \Delta K)$. The fact that $\Phi^*$ is isometric on $C(G // K)$ follows by elementary computation.
\end{rmk}
Let $(G,K)$ be a compact strong Gelfand pair, i.e., the group $G$ is compact and $(G,K)$ is a strong Gelfand pair. In particular, $(G,K)$ is a Gelfand pair. Let $X=G/K$ denote the corresponding homogeneous space. For an irreducible unitary representation $\pi$ of $G$, let $\mathcal{H}_{\pi}$, $\mathcal{H}_{\pi_e}$, $P_{\pi}$ and $\hat{G}_K$ be as in Section \ref{subsec:gelfandpairs}. Then $L^2(X)=\oplus_{\pi \in \hat{G}_K} \mathcal{H}_{\pi}$ (see Section \ref{subsec:nclps}). Let $h_{\pi}$ denote the spherical function corresponding to the equivalence class $\pi$ of representations. Then for every $\varphi \in L^2(K \backslash G \slash K)$ we have $\varphi=\sum_{\pi \in \hat{G}_K} c_{\pi} \dim{\mathcal{H}_{\pi}} h_{\pi}$, where $c_{\pi}=\langle \varphi,h_{\pi} \rangle$.

Recall that any unitary irreducible representation of a product of compact Lie groups arises as the tensor product of unitary irreducible representations of these groups. Also, it was already known from \cite{goldrichwigner} that a pair $(G,K)$ consisting of a locally compact group and a compact subgroup $K$ of $G$ is a strong Gelfand pair if and only if for every unitary irreducible representation $\pi$ of $G$, the space $\mathrm{Hom}_K(\pi,\tau)$ is at most one-dimensional for all unitary irreducible representations $\tau$ of $K$. Combining this with Theorem \ref{thm:bijectivecorrespondencesphericalsspherical} and Remark \ref{rmk:isometry}, the following result follows.
\begin{thm}
  Let $(G,K)$ be a compact strong Gelfand pair, and let $f \in L^2(G // K)$. Then
\[
  f = \sum_{\pi \in \widehat{G \times K}_{\Delta K}} c_{\pi} \dim{\mathcal{H}_{\pi}} (h_{\pi} \circ \Phi^{-1}) = \sum_{\pi \in \hat{G}} c_{\pi} \dim{\mathcal{H}_{\pi}} h^s_{\pi},
\]
where $h^s_{\pi}$ denotes the s-spherical function associated with $\pi$.
\end{thm}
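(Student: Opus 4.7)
The plan is to transport the already-established spherical-function expansion for the Gelfand pair $(G\times K,\Delta K)$ back to $(G,K)$ via the isometric bijection $\Phi^{*}$.

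First I would invoke Remark~\ref{rmk:isometry}, which identifies $L^{2}(G/\!/K)$ and $L^{2}(\Delta K\backslash(G\times K)/\Delta K)$ isometrically through $\Phi^{*}$. Since $(G,K)$ being a strong Gelfand pair means that $(G\times K,\Delta K)$ is a Gelfand pair, the compact Gelfand-pair expansion recalled in the paragraph preceding the theorem (a consequence of Peter--Weyl together with the Gelfand-pair property) applies to $\Phi^{*}f\in L^{2}(\Delta K\backslash(G\times K)/\Delta K)$, giving
\[
\Phi^{*}f=\sum_{\pi\in\widehat{G\times K}_{\Delta K}}c_{\pi}\dim(\mathcal{H}_{\pi})\,h_{\pi},\qquad c_{\pi}=\langle\Phi^{*}f,h_{\pi}\rangle.
\]
Precomposing with $\Phi^{-1}$ (i.e.~applying the inverse isometry) then yields
\[
f=\sum_{\pi\in\widehat{G\times K}_{\Delta K}}c_{\pi}\dim(\mathcal{H}_{\pi})\,(h_{\pi}\circ\Phi^{-1}),
\]
which is the first claimed equality. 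Each summand $h_{\pi}\circ\Phi^{-1}$ is an s-spherical function for $(G,K)$ by Theorem~\ref{thm:bijectivecorrespondencesphericalsspherical}, so no spurious terms are introduced.

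For the second equality I would use the tensor-product structure of irreducible representations of the direct product $G\times K$: every $\pi\in\widehat{G\times K}$ factors as $\sigma\otimes\tau$ with $\sigma\in\hat{G}$ and $\tau\in\hat{K}$, and $\sigma\otimes\tau$ lies in $\widehat{G\times K}_{\Delta K}$ exactly when $(\mathcal{H}_{\sigma}\otimes\mathcal{H}_{\tau})^{\Delta K}\neq 0$. This fixed-vector space is canonically isomorphic to $\Hom_{K}(\mathcal{H}_{\tau^{*}},\mathcal{H}_{\sigma}|_{K})$, and by the Goldrich--Wigner criterion recalled just before the theorem the strong Gelfand hypothesis forces this space to be at most one-dimensional. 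Hence $\widehat{G\times K}_{\Delta K}$ is identified with the set of pairs $(\sigma,\tau)$ for which $\tau^{*}$ occurs in $\sigma|_{K}$, and the reindexing $\pi\in\hat{G}$ in the theorem is to be read as ranging through this distinguished parametrisation (with the $K$-type canonically attached to each $\sigma$). Under this relabelling $h_{\pi}\circ\Phi^{-1}$ is precisely the s-spherical function $h^{s}_{\pi}$ attached to $\pi$, and the dimensions on both sides match via $\dim(\mathcal{H}_{\sigma\otimes\tau})=\dim(\mathcal{H}_{\sigma})\dim(\mathcal{H}_{\tau})$.

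The first equality is essentially automatic once one has the isometry $\Phi^{*}$ together with the classical expansion for the Gelfand pair $(G\times K,\Delta K)$. The real work, and the principal obstacle, lies in the second equality: one must do the bookkeeping needed to pass from the $\widehat{G\times K}_{\Delta K}$ indexing to the $\hat{G}$ indexing, using Frobenius reciprocity for the diagonal embedding $\Delta K\hookrightarrow G\times K$ and verifying that the strong Gelfand hypothesis ensures each pair $(\sigma,\tau)$ contributes at most once. Once this identification is made precise and one checks that the coefficients and dimensions track correctly through the bijection of Theorem~\ref{thm:bijectivecorrespondencesphericalsspherical}, the theorem follows.
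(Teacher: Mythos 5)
Your argument is correct and is essentially the paper's own: the paper does not write out a proof but simply asserts that the result follows by combining the compact Gelfand pair expansion for $(G\times K,\Delta K)$ with Theorem \ref{thm:bijectivecorrespondencesphericalsspherical}, Remark \ref{rmk:isometry}, and the Goldrich--Wigner criterion, which is exactly the route you take. You in fact supply more detail than the paper, in particular by making explicit (via Frobenius reciprocity for $\Delta K\hookrightarrow G\times K$) that the second sum is really indexed by pairs $(\sigma,\tau)$ with $\tau^{*}$ a $K$-type of $\sigma$ --- a point the paper's notation ``$\pi\in\hat{G}$'' glosses over.
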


\end{appendix}

\section*{Acknowledgements}
Appendix \ref{sec:sgp} discusses certain observations of Thomas Danielsen and the second named author. We thank Thomas Danielsen for permitting us to include them in this article. We thank Magdalena Musat, Mikael de la Salle and Henrik Schlichtkrull for numerous valuable suggestions and remarks.

\end{document}